\newcommand{\showcomments}{yes}
\newsavebox{\commentbox}
\newcommand{\compat}[1]{{\ifthenelse{\equal{\showcomments}{yes}}{\textcolor{red}{Pat: #1}}{}}}
\newcommand{\comkha}[1]{{\ifthenelse{\equal{\showcomments}{yes}}{\textcolor{blue}{Kha: #1}}{}}}
\theoremstyle{plain}
\newtheorem{thm}{Theorem}[section]
\newtheorem{cor}[thm]{Corollary}
\newtheorem{prop}[thm]{Proposition}
\newtheorem{lemma}[thm]{Lemma}
\newtheorem{ques}{Question}
\newtheorem{conj}{Conjecture}
\theoremstyle{definition}
\newtheorem{rem}[thm]{Remark}
\DeclareMathOperator{\rank}{rank}
\DeclareMathOperator{\Aut}{Aut} \DeclareMathOperator{\Out}{Out}
\DeclareMathOperator{\SL}{SL} \DeclareMathOperator{\PSL}{PSL}
\DeclareMathOperator{\GL}{GL}
\DeclareMathOperator{\Aff}{Aff}
\DeclareMathOperator{\Inn}{Inn}
\DeclareMathOperator{\diag}{diag}
\newcommand{\Q}{\ensuremath{\mathbb{Q}}}
\newcommand{\R}{\ensuremath{\mathbb{R}}}
\newcommand{\Z}{\ensuremath{\mathbb{Z}}}
\newcommand{\C}{\ensuremath{\mathbb{C}}}
\newcommand{\T}{\ensuremath{\mathbb{T}}}
\newcommand{\CA}{\mathcal A}
\newcommand{\CI}{\mathcal I}
\newcommand{\CS}{\mathcal S} \newcommand{\CT}{\mathcal T}
\newcommand{\va}{{\mathbf{a}}}
\newcommand{\vb}{{\mathbf{b}}}
\newcommand{\e}{{\mathbf{e}}}
\newcommand{\vv}{{\mathbf{v}}}
\newcommand{\vw}{{\mathbf{w}}}
\newcommand{\Conj}{\mathrm{conj}}
\newcommand{\0}{\boldsymbol{0}}% bold zero
\newcommand{\eval}{\mathrm{eval}}
\newcommand{\bPsi}{\overline{\Psi}}%
\DeclareMathOperator{\img}{img}
\title{The Extrinsic Primitive Torsion Problem}
\author{Khalid Bou-Rabee}
\email{kbourabee@ccny.cuny.edu}
\address{Department of Mathematics \\ 
The City College of New York \\ 
160 Convent Ave \\ 
New York, NY 10031}
\address{
Department of Mathematics \\ 
The Graduate Center, CUNY \\ 
365 Fifth Avenue \\
New York, NY 10016
}
\author{W. Patrick Hooper}
\email{whooper@ccny.cuny.edu}
\address{Department of Mathematics \\ 
The City College of New York \\ 
160 Convent Ave \\ 
New York, NY 10031}
\address{
Department of Mathematics \\ 
The Graduate Center, CUNY \\ 
365 Fifth Avenue \\
New York, NY 10016
}
\begin{document}
\begin{abstract}
Let $P_k$ be the subgroup generated by $k$th powers of primitive elements in $F_r$, the free group of rank $r$.
We show that $F_2/P_k$ is finite if and only if $k$ is $1$, $2$, or $3$.
We also fully characterize $F_2/P_k$ for $k = 2,3,4$.
In particular, we give a faithful nine dimensional representation of $F_2/P_4$ with infinite image.
\end{abstract}
\maketitle

\subsection*{Keywords}
The Burnside Problem, Primitive elements, Characteristic subgroups

%\tableofcontents

\section{Introduction}

% Let $G$ be a group such that the smallest cardinality of a generating set of $G$ is $r$. Recall that $g \in G$ is $r$-\emph{primitive} if it is part of a generating set of $G$ with $r$ elements. 
% The \emph{rank} of a group is the cardinality of a generating set of the group of minimal size.
% Denote the rank $r$ free group by $F_r$. 
% Clearly, the set of primitive elements generates $F_k$ for any $k$. 
% The main goal of this paper is to determine when fixed powers of primitive elements enjoy a similar property.
% We collect this question, along with related ones, into the following.

Let $G$ be a group and $r$ be a cardinality. We say that $g \in G$ is $r$-\emph{primitive} if it is part of a generating set of $G$ with $r$ elements. The \emph{rank} of a group $G$ is the cardinality of a generating set of minimal size, and an element of $G$ is called \emph{primitive} if it is $r$-primitive with $r$ equal to the rank of $G$. Denote the rank $r$ free group by $F_r$. %Clearly, the set of primitive elements generates $F_r$ for any $r$. 
%The main goal of this paper is to determine when fixed powers of primitive elements enjoy a similar property. 
This paper concerns the following collection of questions.

\begin{ques}[The Extrinsic Primitive Torsion Problems]
\label{q:extrinsic}
Fix positive integers $r$ and $k$. Let $\Gamma$ be an image of $F_r$ such that the image of every $r$-primitive element in $F_r$ has order dividing $k$.
\begin{enumerate}[label=\emph{(\alph*)}]
\item Is $\Gamma$ necessarily finite?
\item Is $\Gamma$ necessarily virtually nilpotent?
\item Is $\Gamma$ necessarily virtually solvable?
\item Is $\Gamma$ necessarily finitely presented?
\end{enumerate}
What if $\Gamma$ is as above and also residually finite?
\end{ques}

\noindent
Observe that a positive answer to Questions \ref{q:extrinsic} (a) or (b) implies a positive answer to Question \ref{q:extrinsic} (d).

The Extrinsic Primitive Torsion Problems are topological variants of the classical Burnside Problem posed by William Burnside in 1902 \cite{WB1902}. This problem has led to many important discoveries: the classical Jordan-Schur Theorem, the A. Ju. Ol'\v{s}anski\u{\i}'s outrageous Monster groups \cite{MR571100}, and the fundamental Golod--Shafarevich Theorem \cite{MR0161852}.
As such Question \ref{q:extrinsic} is intrinsically motivated through group theory (moreover, it increases our understanding of new characteristic subgroups of free groups). The case of $r=2$ has direct ties to geometric questions about square-tiled surfaces; please see Appendix A. 

There has been significant progress made on the Primitive Torsion Problem for some sufficiently large $k$ \cite{MR3570293, MP2017}.
This paper answers Question \ref{q:extrinsic} (a) in the case $r = 2$.
This paper also answers Question \ref{q:extrinsic} (a)-(d) in the cases $r=2$ and $k \in \{2,3,4\}$.

%The answers to all questions in these cases are yes, except that that when $r=2$ and $k=4$, the group $\Gamma$ need not be finite.
%For $r = 2$ and $k \geq 5$, we show that the answer to Question \ref{q:extrinsic}\,(a) is no.
%We do not consider the case $r > 2$ in this paper.
%We do not know the answers for parts (b), (c), or (d) for $r = 2$ and $k \geq 5$ and we do not consider the case $r >2$ in this paper. 

We will succinctly state our findings in a table.
Let $P_{r,k} \subset F_r$ be the subgroup generated by $k$th powers of primitive elements in $F_r$ (observe that the answer to Questions \ref{q:extrinsic} (a), (b), or (c) is affirmative if and only if the respective answer to (a), (b), or (c) is affirmative for $\Gamma=F_r/P_{r,k}$).
Use $P_k$ to denote $P_{2,k}$ and use $H(R)$ to denote the Heisenberg group over a ring $R$.
\vspace{1em}

\begin{center}
\begin{tabular}{l l l}
\toprule
Subgroup & Index in $F_2$ &  Quotient $G_k=F_2/P_k$ \\
\midrule
$P_2$ & $4$ & The Klein four-group. \\
$P_3$ & $27$ &  $H(\Z/3)$. \\
$P_4$ & $\infty$ & Virtually a five dimensional image of $H(\Z) \times H(\Z)$. \\
$P_5$ & $\infty$ & \emph{We conjecture virtually solvable.} \\
$P_k$ with $k \geq 6$ & $\infty$ & \emph{We conjecture the quotient is not finitely presented.} \\
\bottomrule
\end{tabular}
\end{center}
\vspace{1em}

In resolving the cases $k = 4$ we show that $F_2/P_4$ is isomorphic to the matrix group generated by the following two matrices:
$$\diag(1, -1, -i, -i; -1, 1, i, i; 1),$$
$$\left(\begin{array}{rrrr|rrrr|r}
0 & 0 & 0 & 0 & 0 & 1 & 1 & 0 & 0 \\
0 & 0 & 0 & 0 & -1 & 0 & 0 & 1 & 0 \\
0 & 0 & 0 & 0 & 0 & 0 & 1 & 0 & 1 \\
0 & 0 & 0 & 0 & 0 & 0 & 0 & 1 & 0 \\
\hline
0 & -1 & -1 & 0 & 0 & 0 & 0 & 0 & 0 \\
1 & 0 & 0 & -1 & 0 & 0 & 0 & 0 & 0 \\
0 & 0 & -1 & 0 & 0 & 0 & 0 & 0 & 0 \\
0 & 0 & 0 & -1 & 0 & 0 & 0 & 0 & -1 \\
\hline
0 & 0 & 0 & 0 & 0 & 0 & 0 & 0 & 1
\end{array}\right).$$

For $k \geq 5$, we develop tools for constructing and refining new infinite linear representations of $F_2/P_k$.
These tools allow us to answer Question \ref{q:extrinsic}\, (a), and we hope they will be useful in future work.

Instead of speaking of primitivity in a free group, we can phrase an intrinsic version of Question \ref{q:extrinsic}.
\begin{ques}[The Intrinsic (Restricted) Primitive Torsion Problems]
\label{q:intrinsic}
Fix positive integers $r$ and $k$. Let $\Gamma$ be a (residually finite) group of rank $r$ such that every primitive element has order dividing $k$. Which questions from Question \ref{q:extrinsic} have affirmative answers?
\end{ques}

The Primitive Torsion Problems are natural variants of the original  \emph{Bounded Burnside Problem}. There has been great progress in understanding the quotients arising from these problems, see for instance \cite{CG17}.
Moreover, studying laws other than the power law in restricted Burnside Problems is a very active area, see \cite{BT2017} and \cite{MR3451381} for the state of the art.

\begin{ques}[The Bounded Burnside Problem]
Fix $r,k \in \Z$.
Let $G$ be a group generated by $r$ elements.
Let $B_k$ be the group in $G$ generated by elements of the form $g^k$ where $g \in G$.
Is $G/B_k$ necessarily finite?
\end{ques}

We note that when $G/P_k$ is virtually solvable, the resulting group $G/B_k$ is necessarily finite. Thus, our work recovers the well-known result that $F_2/B_4$ is finite.
If our conjecture that $F_2/P_5$ is virtually solvable is correct, then it follows that $F_2/B_5$ is finite, which is unknown.

\subsection*{Outline of article}

In \S \ref{sect:generators}, we describe normal generators for $P_k$. We produce finite lists
of normal generators in case $k \in \{2,3,4,5\}$. The generators in these cases correspond to the vertices of the triangular dihedron, the tetrahedron, the octahedron, and the icosahedron.
We use our list of generators to show that the quotients $F_2/P_2$ and $F_2/P_3$ are as listed
in the introduction. Running out of platonic solids with triangular faces, our techniques would give a infinite collection of normal generators for $P_k$ for $k \geq 6$, and so we conjecture that $F_2/P_k$ is not finitely presented for $k \geq 6$. 

In \S \ref{sect:representations}, we produce highly symmetric representations of $F_2/P_k$ into $\GL(n,\C)$ with infinite image when $k \geq 4$. Our technique involves deforming a representation into $\GL(n,\C)$ inside a bigger group, namely $\GL(N,\C)$ for $N>n$. 
We take highly symmetric representations of $F_2/P_k \to \GL(n,\C)$ which factor through a finite group and then deform them in such a way that the representations develop an infinite image in $\GL(N,\C)$ while remaining highly symmetric. This allows us to prove that $F_2/P_k$ is infinite for $k \geq 4$. Also, the process leads to new highly-symmetric representations of $F_2/P_k$. In the case of $k=4$, we repeat this process twice (with a tensor product in the middle) to produce the representation $F_2/P_4 \to \GL(9,\C)$
which was mentioned in the introduction.

In \S \ref{sect:P4}, we prove that our representation $F_2/P_4 \to \GL(9,\C)$ is faithful and proves $F_2/P_4$ has the form mentioned in the introduction.

Appendix \ref{appendix} discusses the relationship between this work and the geometry of square-tiled surfaces.

\section{Normal generators for \texorpdfstring{$P_k$}{P\_k}}
\label{sect:generators}

\subsection{Primitive elements of \texorpdfstring{$F_2$}{the free group of rank two}}
\label{sect:primitive}

Let $F_2$ denote the free group $\langle a, b\rangle$. The reader will recall or quickly observe the following facts about primitive elements of $F_2$:
\begin{enumerate}
\item If $c \in F_2$ is primitive then there is a $\phi \in \Aut(F_2)$ such that $\phi(a)=c$. 
\item If $c \in F_2$ is primitive then so every element of its conjugacy class $[c]=\{gcg^{-1}:~g \in F_2\}$. 
\end{enumerate}
In particular, we will say a conjugacy class is {\em primitive} if it consists of primitive elements of $F_2$.

The observation that there is a short exact sequence
$$1 \to F_2 \to \Aut(F_2) \xrightarrow{D} \GL(2,\Z) \to 1$$
dates back to Jakob Nielsen's 1913 Thesis. Here the map $F_2 \to \Aut(F_2)$ sends an element of $F_2$ to its corresponding inner automorphism and thus $\GL(2,\Z)$ is isomorphic to the outer automorphism group $\Out(F_2)=\Aut(F_2)/\Inn(F_2)$.
The map $D:\Aut(F_2) \to \GL(2,\Z)$ may be defined by using the abelianization homomorphism 
$\mathrm{ab}:F_2 \to \Z^2$, which we choose to satisfy $a \mapsto (1,0)$ and $b \mapsto (0,1)$. Then $D(\phi) \in \GL(2,\Z)$ is determined by the condition that $D(\phi) \circ \mathrm{ab}(g)=\mathrm{ab} \circ \phi(g)$ for all $g \in F_2$. 

An automorphism of $F_2$ either preserves the conjugacy class of the commutator $[a,b]$ or sends it to the conjugacy class of $[b,a]$. Thus there is a natural homomorphism $\Aut(F_2) \to C_2$ where we identify $C_2$ with the permutation group of these conjugacy classes. We set $\Aut_+(F_2)$ to be the kernel which consists of automorphisms preserving the conjugacy class of the commutator $[a,b]$. We use $\Aut_-(F_2)$ to denote $\Aut(F_2) \smallsetminus \Aut_+(F_2)$.

The group $\Out_+(F_2)=\Aut_+(F_2)/\Inn(F_2)$ is isomorphic to $\SL(2,\Z)$ via the map $D$ above. The following elements
of $\Aut_+(F_2)$ have images in $\Out_+(F_2)$ which generate:
\begin{equation}
\label{eq:SL2Z generators}
\psi_0(a)=b,~\psi_0(b)=b^{-1}a^{-1}; \quad 
\psi_1(a)=b,~\psi_1(b)=a^{-1}; \quad 
\psi_2(a)=a,~\psi_2(b)=ab.
\end{equation}
We will use $\bar \psi_0$, $\bar \psi_1$ and $\bar \psi_2$ to denote the outer automorphism classes of these elements.
It may be observed that the following identities are satisfied:
\begin{equation}
\label{eq:psi relations}
\psi_0 \circ \psi_2=\psi_1, \quad \psi_0^3=\psi_1^4=1, \quad [\bar \psi_1^2,\bar \psi_0]=[\bar \psi_1^2,\bar \psi_2]=1.
\end{equation}

Recall that outer automorphisms act on conjugacy classes. We will use $[g]$ to denote the conjugacy class of $g \in F_2$. We have the following:
\begin{lemma}[Primitive conjugacy classes]
\label{lem:primitive conjugacy classes}
An element $g \in F_2$ is primitive if and only if it lies in the conjugacy class $\bar \psi([a])$ for some $\bar \psi \in \Out_+(F_2)$. 
\end{lemma}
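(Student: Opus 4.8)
The plan is to prove the equivalence in two directions, using the short exact sequence $1 \to F_2 \to \Aut(F_2) \xrightarrow{D} \GL(2,\Z) \to 1$ together with the fact (recalled at the start of Section~\ref{sect:primitive}) that an element is primitive if and only if it lies in the $\Aut(F_2)$-orbit of $a$. The key extra ingredient needed is that conjugacy-class orbits under $\Aut(F_2)$ coincide with conjugacy-class orbits under $\Out_+(F_2)$; the $\Out$ reduction is automatic since inner automorphisms act trivially on conjugacy classes, so the real content is passing from $\Out(F_2)$ to $\Out_+(F_2)$, i.e.\ removing the orientation-reversing part.

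First I would prove the ``if'' direction. Suppose $g \in \bar\psi([a])$ for some $\bar\psi \in \Out_+(F_2)$. Lift $\bar\psi$ to $\psi \in \Aut_+(F_2)$; then $g$ is conjugate to $\psi(a)$, which is primitive since $\psi(a)$ is part of the basis $\{\psi(a),\psi(b)\}$, and a conjugate of a primitive element is primitive (fact (2) of Section~\ref{sect:primitive}). So $g$ is primitive.

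For the ``only if'' direction, suppose $g$ is primitive. By fact (1) there is $\phi \in \Aut(F_2)$ with $\phi(a) = g$, so $[g] = \bar\phi([a])$ where $\bar\phi \in \Out(F_2)$. If $\bar\phi \in \Out_+(F_2)$ we are done. Otherwise $\phi \in \Aut_-(F_2)$, and I would exhibit a single orientation-reversing automorphism $\sigma$ fixing $[a]$ and then replace $\phi$ by $\phi \circ \sigma$: since $\sigma$ sends $[a,b]$-conjugacy-class to its reverse and $\phi$ does too, the composite $\phi\circ\sigma$ lies in $\Aut_+(F_2)$, while $(\phi\circ\sigma)(a)$ is conjugate to $\phi(a) = g$ because $\sigma(a)$ is conjugate to $a$. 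The natural choice is $\sigma(a) = a$, $\sigma(b) = b^{-1}$ (or $\sigma(a)=a^{-1}$, $\sigma(b)=b$): one checks directly that $[\sigma(a),\sigma(b)] = [a,b^{-1}]$ is conjugate to $[b,a]$, confirming $\sigma \in \Aut_-(F_2)$, while $\sigma(a) = a$ so $[g] = \bar\phi([a]) = \overline{\phi\circ\sigma}([a])$ with $\overline{\phi\circ\sigma} \in \Out_+(F_2)$.

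The main obstacle is really just bookkeeping rather than depth: one must make sure the homomorphism $\Aut(F_2) \to C_2$ is genuinely surjective (so that $\Aut_-(F_2)$ is nonempty and the ``correction'' automorphism $\sigma$ exists), and that the chosen $\sigma$ actually fixes the conjugacy class $[a]$ — a small explicit computation with the commutator. Everything else follows formally from the two stated facts about primitive elements and from $\Inn(F_2)$ acting trivially on conjugacy classes. I would present the argument in roughly the order above: recall the two facts, do the easy ``if'' direction, then handle ``only if'' by the orientation-correction trick with an explicit $\sigma$.
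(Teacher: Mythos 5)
Your proposal is correct and follows essentially the same route as the paper: the easy direction uses that primitivity is invariant under conjugation and automorphisms, and the forward direction corrects an orientation-reversing $\phi$ by precomposing with the automorphism $a \mapsto a$, $b \mapsto b^{-1}$, which is exactly the paper's $\psi_-$. The only difference is that you spell out the small verifications (that this automorphism lies in $\Aut_-(F_2)$ and fixes $[a]$) which the paper leaves implicit.
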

\begin{proof}
If $g \in F_2$ is primitive then by (1) above there is a $\psi \in \Aut(F_2)$ such that $\psi(a)=g$. Then
by possibly precomposing with the automorphism $\psi_- \in \Aut_-(F_2)$ determined by $\psi_-(a)=a$ and $\psi_-(b)=b^{-1}$ we can assume that $\psi \in \Aut_+(F_2)$. Let $\bar \psi \in \Out_+(F_2)$ be the class containing $\psi$. Then $[g]=\bar \psi([a])$. 
The converse is clear since primitivity is a conjugacy invariant and is invariant under automorphisms.
\end{proof}

It follows that the conjugacy classes of primitive elements are naturally identified with $\Out_+(F_2)$ modulo the stabilizer
of the conjugacy class $[a]$. This stabilizer is $\langle \bar \psi_2 \rangle$.

The primitive conjugacy classes come naturally in pairs: if $g \in F_2$ is primitive, then we call the conjugacy classes $[g]$ and $[g^{-1}]$ {\em opposites}. We will denote the collection of unions of opposite pairs
of conjugacy classes by ${\mathcal P}$. Opposites are related by the action of the central involution $\bar \psi_1^2$ of $\Out_+(F_2)$:

\begin{prop}
If $[g]$ is a primitive conjugacy class then its opposite $[g^{-1}]$ is $\bar \psi_1^2([g])$. 
\end{prop}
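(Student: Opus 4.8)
The plan is to check the claim first on the standard primitive $[a]$ and then transport it by the $\Out_+(F_2)$-action, using the fact that $\bar\psi_1^2$ is central in $\Out_+(F_2)$. For the base case, I would compute $\bar\psi_1^2$ explicitly: since $\psi_1(a)=b$ and $\psi_1(b)=a^{-1}$, we get $\psi_1^2(a)=\psi_1(b)=a^{-1}$ and $\psi_1^2(b)=\psi_1(a^{-1})=b^{-1}$. Thus $\psi_1^2$ is the automorphism sending $a\mapsto a^{-1}$, $b\mapsto b^{-1}$, so $\bar\psi_1^2([a])=[a^{-1}]$, which is exactly the opposite of $[a]$. (One should note $\psi_1^2\in\Aut_+(F_2)$, consistent with it representing an element of $\Out_+(F_2)$; indeed $D(\psi_1^2)=-I\in\SL(2,\Z)$, confirming it is the central involution.)

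For a general primitive conjugacy class $[g]$, Lemma~\ref{lem:primitive conjugacy classes} gives some $\bar\psi\in\Out_+(F_2)$ with $[g]=\bar\psi([a])$. Then, applying $\bar\psi$ to the base-case identity and using centrality of $\bar\psi_1^2$,
\begin{equation*}
\bar\psi_1^2([g])=\bar\psi_1^2\bigl(\bar\psi([a])\bigr)=\bar\psi\bigl(\bar\psi_1^2([a])\bigr)=\bar\psi([a^{-1}]).
\end{equation*}
It remains to identify $\bar\psi([a^{-1}])$ with $[g^{-1}]$. This follows because taking inverses is compatible with the action of automorphisms on conjugacy classes: if $\phi\in\Aut(F_2)$ represents $\bar\psi$ and $\phi(a)=h$ with $[h]=[g]$, then $\phi(a^{-1})=h^{-1}$ and $[h^{-1}]=[g^{-1}]$, since $[g]=[h]$ implies $[g^{-1}]=[h^{-1}]$. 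Hence $\bar\psi_1^2([g])=[g^{-1}]$, as desired.

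The only real content here is the base-case computation of $\psi_1^2$ together with the observation that $\bar\psi_1^2$ is central — the latter is already recorded in the displayed relations \eqref{eq:psi relations} (the identities $[\bar\psi_1^2,\bar\psi_0]=[\bar\psi_1^2,\bar\psi_2]=1$ say precisely that $\bar\psi_1^2$ commutes with generators of $\Out_+(F_2)$). So I expect no genuine obstacle; the mild care needed is to keep the bookkeeping straight between $\Aut$ and $\Out$, i.e. to verify that $\psi_1^2$ genuinely lies in $\Aut_+(F_2)$ and that conjugating the base-case identity by a representative of $\bar\psi$ is legitimate at the level of conjugacy classes (which it is, since inner automorphisms act trivially on conjugacy classes, so the $\Out_+$-action on $\mathcal P$ is well defined).
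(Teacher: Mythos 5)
Your proposal is correct and follows essentially the same route as the paper: write $[g]=\bar\psi([a])$ via Lemma \ref{lem:primitive conjugacy classes}, note $\bar\psi_1^2([a])=[a^{-1}]$, identify $\bar\psi([a^{-1}])$ with $[g^{-1}]$, and use centrality of $\bar\psi_1^2$ in $\Out_+(F_2)$ to conclude. The extra bookkeeping you supply (checking $\psi_1^2\in\Aut_+(F_2)$ and that the $\Out_+$-action is compatible with inversion of conjugacy classes) is fine but adds nothing beyond the paper's argument.
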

\begin{proof}
From the lemma above we have $[g]=\bar \psi([a])$ for some $\bar \psi \in \Out_+(F_2)$. 
Since $\psi_1^2(a)=a^{-1}$ we have $\bar \psi_1^2([a])=[a^{-1}]$ and  $[g^{-1}]=\bar \psi \circ \bar \psi_1^2([a])$. Since $\bar \psi_1^2$ is central in $\Out_+(F_2)$ we have $[g^{-1}]=\bar \psi_1^2 \circ \bar \psi([a])=\bar \psi_1^2([g])$. 
\end{proof}

Since $\langle \bar \psi_2 \rangle$ is the stabilizer of $[a]$ and $\bar \psi_1^2$ acts as above,
there is a bijective correspondence from the coset space
\begin{equation}
\label{eq:C to P}
\mathcal{C}= \Out_+(F_2) / \langle \bar \psi_2, \bar \psi_1^2 \rangle
\quad \text{to ${\mathcal P}$ given by} \quad
\bar \psi \langle \bar \psi_2, \bar \psi_1^2 \rangle \mapsto \bar \psi([a]) \cup \bar \psi([a^{-1}]).
\end{equation}

The group $\SL(2,\Z)/\pm I$ has a well known action on the upper half plane by M\"obius transformations with $-I$ acting trivially. Here the matrix
$$\left(\begin{array}{rr}
m_{11} & m_{12} \\
m_{21} & m_{22}
\end{array}\right) \quad \text{acts by} \quad z \mapsto \frac{m_{11}z+m_{12}}{m_{21}z+m_{22}}.$$
This is useful for organizing the pairs of primitive conjugacy classes. Observe that $\langle D(\bar \psi_2), D(\bar \psi_1)\rangle$ is the stabilizer in $\SL(2,\Z)$ of the point $\frac{1}{0}$. The $\SL(2,\Z)$ orbit of $\frac{1}{0}$ is $\hat \Q=\Q\cup \{\frac{1}{0}\}$. Thus, we have:

\begin{lemma}
\label{lem:maps to Q}
There are bijections ${\mathcal C}:\hat \Q \to {\mathcal C}$ and ${\mathcal P}:\hat \Q \to {\mathcal P}$ compatible with \eqref{eq:C to P} such that for any $\frac{p}{q} \in \hat Q$ we have:
\begin{itemize}
\item The class ${\mathcal C}(\frac{p}{q})$ is the collection of $\bar \psi \in \Out_+(F_2)$ such that $D(\bar \psi) (\frac{1}{0})=\frac{p}{q}$. 
\item The union of the pair of conjugacy classes ${\mathcal P}(\frac{p}{q})$ consists of all primitive elements $g \in F_2$ such that $\mathrm{ab}(g)=\pm(p,q)$ (where $p,q\in \Z$ are taken to be relatively prime).
\end{itemize}
\end{lemma}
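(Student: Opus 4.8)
The plan is to derive both bijections from the isomorphism $D\colon\Out_+(F_2)\xrightarrow{\ \sim\ }\SL(2,\Z)$ and the orbit--stabilizer theorem for the M\"obius action on $\hat\Q$. First I would check that the map $\bar\psi\mapsto D(\bar\psi)\bigl(\tfrac10\bigr)$ from $\Out_+(F_2)$ to $\hat\Q$ is constant on each coset of $\langle\bar\psi_2,\bar\psi_1^2\rangle$: indeed $D(\bar\psi_2)=\left(\begin{smallmatrix}1&1\\0&1\end{smallmatrix}\right)$ acts as $z\mapsto z+1$ and so fixes $\tfrac10$, while $D(\bar\psi_1^2)=-I$ acts trivially. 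Since $\SL(2,\Z)$ acts transitively on $\hat\Q$ with the stabilizer of $\tfrac10$ equal to $\bigl\{\pm\left(\begin{smallmatrix}1&n\\0&1\end{smallmatrix}\right):n\in\Z\bigr\}=\langle D(\bar\psi_2),-I\rangle=\langle D(\bar\psi_2),D(\bar\psi_1^2)\rangle=D\bigl(\langle\bar\psi_2,\bar\psi_1^2\rangle\bigr)$ (using $D(\bar\psi_1^2)=-I$), the orbit--stabilizer theorem together with the fact that $D$ is an isomorphism shows that the induced map $\mathcal C\to\hat\Q$ is a bijection. I then let $\mathcal C\colon\hat\Q\to\mathcal C$ be its inverse and $\mathcal P\colon\hat\Q\to\mathcal P$ be the composition of $\mathcal C$ with the bijection \eqref{eq:C to P}, so compatibility with \eqref{eq:C to P} is automatic. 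The first bulleted property is then immediate: the fiber over $\tfrac pq$ of the surjection $\Out_+(F_2)\twoheadrightarrow\mathcal C\xrightarrow{\ \sim\ }\hat\Q$ is, on the one hand, the coset $\mathcal C\bigl(\tfrac pq\bigr)$ (viewed as a subset of $\Out_+(F_2)$), and on the other hand exactly $\{\bar\psi:D(\bar\psi)(\tfrac10)=\tfrac pq\}$.

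For the second bullet I would use that for any representative $\psi\in\Aut_+(F_2)$ of $\bar\psi$ one has $\mathrm{ab}(\psi(a))=D(\bar\psi)\binom10$, which is the first column of $D(\bar\psi)\in\SL(2,\Z)$, hence a primitive integer vector whose coordinate ratio is exactly $D(\bar\psi)(\tfrac10)$. Thus if $D(\bar\psi)(\tfrac10)=\tfrac pq$ with $p,q$ coprime, then $\mathrm{ab}(\psi(a))=\pm(p,q)$; since $\mathrm{ab}$ is constant on conjugacy classes and $\mathrm{ab}(g^{-1})=-\mathrm{ab}(g)$, every element of $\mathcal P\bigl(\tfrac pq\bigr)=\bar\psi([a])\cup\bar\psi([a^{-1}])$ has abelianization $\pm(p,q)$. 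Conversely, let $g\in F_2$ be primitive with $\mathrm{ab}(g)=\pm(p,q)$. By Lemma~\ref{lem:primitive conjugacy classes} there is $\bar\phi\in\Out_+(F_2)$ with $[g]=\bar\phi([a])$, and the identity above gives $D(\bar\phi)(\tfrac10)=\tfrac pq$, i.e.\ $\bar\phi\in\mathcal C\bigl(\tfrac pq\bigr)$. Fixing a representative $\bar\psi$ of $\mathcal C\bigl(\tfrac pq\bigr)$, write $\bar\phi=\bar\psi\bar\eta$ with $\bar\eta\in\langle\bar\psi_2,\bar\psi_1^2\rangle$; since $\bar\psi_2$ stabilizes $[a]$ (hence also $[a^{-1}]$) and $\bar\psi_1^2$ is the central involution of $\Out_+(F_2)$ interchanging $[a]$ and $[a^{-1}]$, the subgroup $\langle\bar\psi_2,\bar\psi_1^2\rangle$ preserves $\{[a],[a^{-1}]\}$, so $\bar\eta([a])\in\{[a],[a^{-1}]\}$ and therefore $[g]=\bar\psi\bigl(\bar\eta([a])\bigr)\subseteq\bar\psi([a])\cup\bar\psi([a^{-1}])=\mathcal P\bigl(\tfrac pq\bigr)$. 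Hence $g\in\mathcal P\bigl(\tfrac pq\bigr)$, giving the reverse containment.

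I do not foresee a genuine obstacle here -- the argument is orbit--stabilizer, suitably packaged -- but two points need care: (i) matching the linear action of $\SL(2,\Z)$ on $\Z^2=\mathrm{ab}(F_2)$ with the M\"obius action on $\hat\Q$, in particular recognizing that $D(\bar\psi_1^2)=-I$ acts trivially so that the relevant point-stabilizer is $\langle D(\bar\psi_2),-I\rangle$; and (ii) tracking signs consistently, both the $\pm$ from $\mathrm{ab}(g^{-1})=-\mathrm{ab}(g)$ and the convention that $\tfrac pq$ is represented by the primitive pair $\pm(p,q)$.
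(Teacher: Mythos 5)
Your proposal is correct and is essentially the argument the paper intends: the lemma is left to follow from the preceding observations (the bijection \eqref{eq:C to P}, transitivity of $\SL(2,\Z)$ on $\hat \Q$, and the stabilizer of $\frac{1}{0}$), and your orbit--stabilizer packaging together with the identification $\mathrm{ab}(\psi(a))=D(\bar\psi)\binom{1}{0}$ simply writes that out in full. As a minor point, you correctly take the point-stabilizer to be $\langle D(\bar\psi_2),D(\bar\psi_1^2)\rangle=\langle D(\bar\psi_2),-I\rangle$, whereas the paper's text says $\langle D(\bar\psi_2),D(\bar\psi_1)\rangle$, which appears to be a typo since $D(\bar\psi_1)$ sends $\frac{1}{0}$ to $0$.
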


The {\em Farey triangulation} ${\mathcal F}$ is an $\SL(2,\Z)$ invariant triangulation of the upper half plane with vertices in $\hat \Q$. We depict ${\mathcal F}$ in Figure \ref{fig:farey}. The group $\PSL(2,\Z)$ is the orientation preserving symmetry group of ${\mathcal F}$. It is useful to think of the three spaces $\hat \Q$, ${\mathcal C}$ and ${\mathcal P}$ as in bijective correspondence to the vertices in this triangulation.

\begin{figure}
\includegraphics[width=\textwidth]{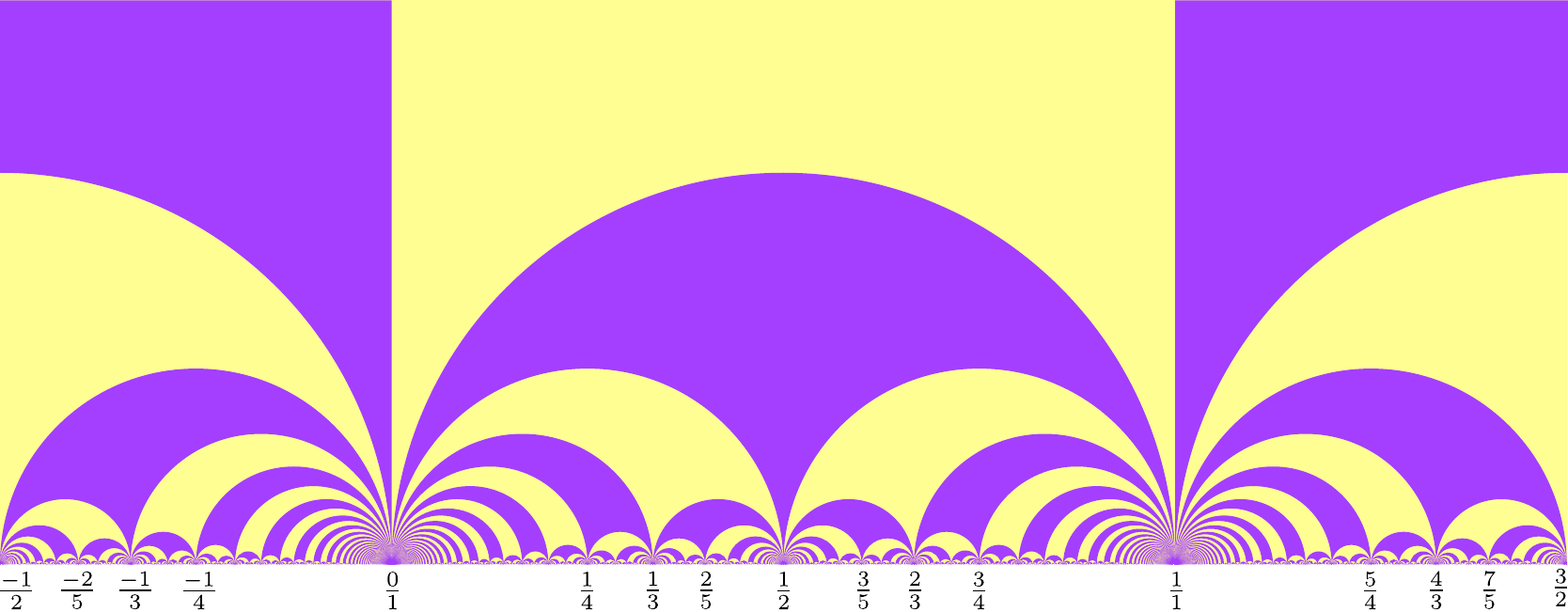}
\caption{A portion of the Farey triangulation ${\mathcal F}$ of the hyperbolic plane with some rational points at infinity marked. The top endpoint of vertical edges is $\frac{1}{0}$.}
\label{fig:farey}
\end{figure}

\subsection{Symmetries and images of primitive elements}

Having a power of a primitive element in a normal subgroup $N$ guarantees that some corresponding elements of $\Out(F_2)$ stabilize $N$. It suffices to consider the case when a power of $a$ lies in $N$.

\begin{lemma}
\label{lem:multitwist}
Suppose $N \subset F_2$ is a normal subgroup containing $a^k$ for some $k \geq 1$. Then $\psi_2^k(N)=N$. Furthermore, the induced action of $\psi_2^k$ on $F_2/N$ given by 
$$hN \mapsto \psi_2^k(h)N$$
is trivial.
\end{lemma}

\begin{proof}
Assume $a^k \in N$ and $N \subset F_2$ is normal. Observe that the action of $\psi_2^k$ satisfies
$$\psi_2^k(a)=a, \quad \psi_2^k(a^{-1})=a^{-1}, \quad \psi_2^k(b)=a^k b \quad \text{and} \quad
\psi_2^k(b^{-1})=b^{-1} a^{-k}.$$
Let $h \in F_2$ and consider $h$ as a word in $\{a,a^{-1},b,b^{-1}\}$. From the above  description of $\psi_2^k$ we see that $\psi_2^k(h)$ is formed from $h$ by inserting copies of $a^k$ and $a^{-k}$ into the word representing $h$. Let $n$ be the number of such insertions. Then we can write 
$$h=\psi_2^k(h) g_1 g_2 \ldots g_n$$
where each $g_i$ is a conjugate of either $a^{-k}$ or $a^{k}$ selected to remove an inserted copy of $a^k$ or $a^{-k}$.
Since $a^k \in N$ and $N$ is normal, each $g_i \in N$.
It follows that $h \in N$ if and only if $\psi_2^k(h) \in N$. Thus $\psi_2^k(N)=N$. Finally we see that for any $hN \in F_2/N$,
$$\psi_2^k(hN) = \psi_2^k(h)N = h g_n^{-1} \ldots g_2^{-1} g_1^{-1} N=hN.$$
\end{proof}

We get the following if a power of a primitive element lies in a normal subgroup of $F_2$.

\begin{cor}
\label{cor:arbitrary primitive power}
Let $\frac{p}{q} \in \hat Q$, let $g \in {\mathcal P}(\frac{p}{q})$, and let $\psi:F_2 \to F_2$ be an automorphism such that the associated outer automorphism
$\bar \psi$ lies in ${\mathcal C}(\frac{p}{q})$.
The for any $k \geq 2$ and for any normal subgroup $N' \subset F_2$ containing $g^k$, 
$\psi \circ \psi_2^k \circ \psi^{-1}(N')=N'$ and the induced action of $\psi \circ \psi_2^k \circ \psi^{-1}$ on $F_2/N'$ is trivial.
\end{cor}
\begin{proof}
From Lemma \ref{lem:maps to Q}, we note that $\bar \psi([a] \cup [a^{-1}])=[g] \cup [g^{-1}]$. Set $N=\bar \psi^{-1}(N')$. By normality, we see that $a^k \in N$. 
Thus Lemma \ref{lem:multitwist} tells us that $\psi_2^k(N)=N$ and $\psi_2^k$ acts trivially on $F_2/N$. It follows that $\psi \circ \psi_2^k \circ \psi^{-1}$ stabilizes $N$ and acts trivially on $F_2/N'$.
\end{proof}

Recall that $P_k \subset F_2$ is the subgroup generated by $k$-th powers of primitive elements of $F_2$. This subgroup is clearly characteristic, thus there is a well defined homomorphism
$$\epsilon: \Aut(F_2) \to \Aut(F_2/P_k); \quad \epsilon(\phi)(g P_k)=\phi(g) P_k.$$
Inner automorphism of $F_2$ are sent by $\epsilon$ to  inner automorphisms of $F_2/P_k$, thus $\epsilon$ induces a well defined map between outer automorphism groups
$$\bar \epsilon: \Out(F_2) \to \Out(F_2/P_k).$$
Let ${\mathcal O}_k \subset \Out_+(F_2)$ denote the subgroup normally generated by $\bar \psi_2^k$. The lemma above guarantees:

\begin{cor}
The subgroup ${\mathcal O}_k$ is contained in $\ker \bar \epsilon$.
\end{cor}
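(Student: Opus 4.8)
The plan is to reduce the corollary to a single membership statement and then read it off from Lemma~\ref{lem:multitwist}. Since $\ker\bar\epsilon$ is a normal subgroup of $\Out(F_2)$, its intersection with $\Out_+(F_2)$ is a normal subgroup of $\Out_+(F_2)$. Hence the normal closure ${\mathcal O}_k$ of $\bar\psi_2^k$ inside $\Out_+(F_2)$ is contained in $\ker\bar\epsilon$ as soon as the single element $\bar\psi_2^k$ lies in $\ker\bar\epsilon$. So everything comes down to checking that $\bar\epsilon(\bar\psi_2^k)$ is the trivial outer automorphism of $F_2/P_k$, equivalently that the automorphism $g P_k\mapsto \psi_2^k(g)P_k$ of $F_2/P_k$ is inner.

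For this I would apply Lemma~\ref{lem:multitwist} with the primitive element $g=a$, the normal subgroup $N=P_k$, and $\frac{p}{q}=\frac{1}{0}$. The hypotheses hold: $a$ is primitive, $\mathrm{ab}(a)=(1,0)$ so $a$ lies in a conjugacy class of the pair ${\mathcal P}(\frac{1}{0})$, and $a^k\in P_k$ by the very definition of $P_k$. The class of the identity automorphism lies in the coset ${\mathcal C}(\frac{1}{0})$ because $D(\mathrm{id})$ fixes $\frac{1}{0}$, so I may take $\bar\psi$ to be that class with representative $\psi=\mathrm{id}$. Then Lemma~\ref{lem:multitwist} asserts precisely that $hP_k\mapsto\psi_2^k(h)P_k$ is the trivial automorphism of $F_2/P_k$, whence $\bar\epsilon(\bar\psi_2^k)=1$ and the corollary follows. (If one prefers not to cite the lemma, the same fact is immediate by hand: $\psi_2^k$ fixes $a$ and sends $b$ to $a^kb$, and $a^kb\,P_k=b\,P_k$ since $b^{-1}a^kb=(b^{-1}ab)^k$ is the $k$th power of a conjugate of $a$, hence of a primitive element, hence lies in $P_k$; so $\epsilon(\psi_2^k)$ fixes both $aP_k$ and $bP_k$ and is the identity.)

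I do not expect a real obstacle here: the statement is a formal consequence of the preceding lemma. The only points requiring a word of care are (i) recording that checking the single generator suffices, which uses normality of $\ker\bar\epsilon\cap\Out_+(F_2)$ in $\Out_+(F_2)$, and (ii) observing that the identity automorphism lies in the coset ${\mathcal C}(\frac{1}{0})$, so that Lemma~\ref{lem:multitwist} applies with the trivial choice of conjugating automorphism.
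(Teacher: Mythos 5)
Your proof is correct, but it is organized differently from the paper's. You check only the single membership $\bar\psi_2^k \in \ker\bar\epsilon$ (either via Lemma \ref{lem:multitwist} with the trivial choice $\bar\psi = \mathrm{id}$, $g=a$, $\frac{p}{q}=\frac{1}{0}$, or by the direct hand computation that $\epsilon(\psi_2^k)$ fixes $aP_k$ and $bP_k$), and then you get all of ${\mathcal O}_k$ for free from the fact that $\ker\bar\epsilon$ is normal in $\Out(F_2)$ — which is legitimate precisely because $P_k$ is characteristic, so $\bar\epsilon$ is defined on all of $\Out(F_2)$. The paper instead verifies membership of every conjugate $\bar\psi\circ\bar\psi_2^k\circ\bar\psi^{-1}$ separately, applying Lemma \ref{lem:multitwist} in its full strength with a varying primitive element $g \in {\mathcal P}(\frac{p}{q})$ attached to the coset ${\mathcal C}(\frac{p}{q})$ containing $\bar\psi$. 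Your route is shorter and, as your parenthetical shows, essentially elementary; what the paper's argument buys is that it rehearses exactly the pattern needed in Theorem \ref{thm:generators}, where the relevant subgroup $Q$ is normally generated by a chosen family of $g_i^k$ and is merely normal rather than characteristic, so no analogue of $\bar\epsilon$ is available and one genuinely must treat each conjugate $\bar\psi_i\circ\bar\psi_2^k\circ\bar\psi_i^{-1}$ via the lemma. Both your stated points of care — that a single-generator check suffices by normality, and that the identity class lies in ${\mathcal C}(\frac{1}{0})$ — are correctly handled.
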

\begin{proof}
We must show that for each $\bar \psi \in \Out_+(F_2)$ we have $\bar \psi \circ \bar \psi_2^k \circ \bar \psi^{-1} \in \ker \bar \epsilon$. 
Since $\ker \bar \epsilon$ is a normal subgroup, we may take $\bar \psi = 1$, and that $\bar \psi_2^k \in \ker \bar \epsilon$ follows from Lemma \ref{lem:multitwist}.
% old:
%Fixing $\bar \psi$, we may find a $\frac{p}{q} \in \hat \Q$ so that $\bar \psi \in {\mathcal C}(\frac{p}{q})$. Choose a $g \in {\mathcal P}(\frac{p}{q})$. Then $g$ is primitive so that $g^k \in P_k$. Choose a representative $\psi \in \Aut_+(F_2)$ of $\bar \psi$. Lemma \ref{lem:multitwist} guarantees that $\epsilon(\psi \circ \psi_2^k \circ \psi^{-1})$ is the trivial automorphism of $F_2/P_k$ so that $\bar \psi \circ \bar \psi_2^k \circ \bar \psi^{-1} \in \ker \bar \epsilon$ as desired.
\end{proof}

% Old Version (referee objected...)
%We remark that the image under $D$ of $\bar \psi_2^k$ has the effect of ``rotating'' the Farey triangulation ${\mathcal F}$ by $k$ triangles about the vertex $\frac{1}{0}$. It follows that the triangulated space
%\begin{equation}
%\label{eq:F_k}
%{\mathcal F}_k={\mathcal F} / D {\mathcal O}_k
%\end{equation}
%is combinatorially the triangulation of a simply connected surface with $k$ triangles meeting at every vertex. So, ${\mathcal F}_k$ a triangulation of the sphere when $k\leq 5$, combinatorially equivalent to the tiling of the plane by Euclidean triangles  when $k=6$, and combinatorially equivalent to a tiling of the hyperbolic plane by regular triangles when $k \geq 7$. 

In order to better understand ${\mathcal O}_k$ we make use of $D:\Out_+(F_2) \to \SL(2,\Z)$ and the M\"obius action on the  Farey triangulation ${\mathcal F}$. Note that $\SL(2,\Z)$ is the group of orientation preserving symmetries of ${\mathcal F}$ which permute the triangles. Thus covering space theory identifies each subgroup $\Gamma \subset \SL(2,\Z)$ bijectively with the (possibly orbifold) quotient ${\mathcal F}/\Gamma$ which is tiled by triangles (possibly including some quotients of triangles by their order $3$ rotation groups). These quotients are intermediate between ${\mathcal F}$ and the modular surface ${\mathcal F}/\SL(2,\Z)$ (which has a vertex added at the cusp since ${\mathcal F}$ includes vertices). The {\em valence} of a vertex in a triangulation is the number of vertices of triangles that are identified to make that point. The valence of a vertex may be a positive integer or infinity.

The following gives a concrete understanding of the quotient ${\mathcal F}_k={\mathcal F}/D{\mathcal O}_k$:

\begin{prop}
\label{prop:F_k}
The orbifold ${\mathcal F}_k$ is the unique simply connected triangulated surface such that all vertices have valence $k$. In particular, the combinatorial type of the triangulated surface ${\mathcal F}_k$ can be described as follows:
\begin{itemize}
    \item If $k \in \{2,3,4,5\}$, then ${\mathcal F}_k$ is a sphere. Specifically ${\mathcal F}_2$ is a triangle doubled across its boundary, ${\mathcal F}_3$ is a tetrahedron, ${\mathcal F}_4$ is an octahedron, and ${\mathcal F}_5$ is an icosahedron.
    \item The quotient ${\mathcal F}_6$ is the plane tiled by equilateral triangles.
    \item For $k \geq 7$, the quotient ${\mathcal F}_k$ is the hyperbolic plane tiled by equilateral triangles each of whose angles measures $\frac{2\pi}{k}$.
\end{itemize}
\end{prop}
\begin{proof}
First observe that $D\psi_2$ acts as the M\"obius transformation $z \mapsto z+1$, and thus sends each triangle of ${\mathcal F}$ incident to $\infty$ to the adjacent triangle in the counterclockwise direction about $\infty$. Thus if $\Gamma \subset \SL(2,\Z)$ contains $D \psi_2^k$, the corresponding quotient ${\mathcal F}/\Gamma$ has valence dividing $k$ at the vertex in the image of $\infty$ under the covering ${\mathcal F} \to {\mathcal F}/\Gamma$.

Now suppose $\Gamma$ contains all of $D{\mathcal O_k}$. Since $\SL(2,\Z)$ acts transitively on $\hat \Q$ and $D{\mathcal O_k}$ is normal in $\SL(2,\Z)$, it follows that each vertex of ${\mathcal F}/\Gamma$
has valence dividing $k$.

Now consider moving from orbifolds to groups. Let $S$ be a connected combinatorial orbifold built by identifying in pairs the edges of some collection of triangles and quotients of a triangle modulo the order three rotation. Such an $S$ is covered by the Farey triangulation, and fixing such a covering map $\pi:{\mathcal F} \to S$, covering space theory associates the deck group
$$\Gamma=\{M \in \SL(2,\Z):~\pi \circ M=\pi\}.$$
We observe that if each vertex of $S$ has valence dividing $k$, then $D{\mathcal O}_k \subset \Gamma$.

We conclude from the previous paragraph that the quotients of ${\mathcal F}$ described in the proposition are of the form ${\mathcal F}/\Gamma$ for some
$\Gamma$ containing $D{\mathcal O}_k$. To see $\Gamma=D{\mathcal O}_k$ recall from covering space theory, the surface ${\mathcal F}_k$ (branched) covers any ${\mathcal F}/\Gamma$ with $D{\mathcal O}_k \subset \Gamma$. But, since the surfaces described in the proposition are simply connected and have all vertices of valence precisely $k$, they exhibit no (non-trivial) branched covers such that all vertices of the cover have valence dividing $k$.
\end{proof}

The same mechanism can be used to shorten the list of group elements needed  to normally generate $P_k$. 
\begin{thm}
\label{thm:generators}
Let $k \geq 2$. Let $\{\frac{p_i}{q_i}:~i \in \Lambda\}$ be a subset of $\hat \Q$ containing one representative of each preimage of a vertex of ${\mathcal F}_k$ under the covering map
${\mathcal F} \to {\mathcal F}_k$. For each $i \in \Lambda$ choose a primitive element $g_i \in {\mathcal P}(\frac{p_i}{q_i})$ and an outer automorphism $\bar \psi_i \in {\mathcal C}(\frac{p_i}{q_i})$. If 
$$\{\bar \psi_i \circ \bar \psi_2^k \circ \bar \psi_i^{-1}:~i \in \Lambda\}$$ 
generates ${\mathcal O}_k$ then $P_k$ is normally generated by
$\{ g_i^k:~i \in \Lambda\}$.
\end{thm}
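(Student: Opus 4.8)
The plan is to show that if $N \subset F_2$ is the normal subgroup generated by $\{g_i^k : i \in \Lambda\}$, then $N = P_k$. The inclusion $N \subseteq P_k$ is immediate since each $g_i$ is primitive, so each $g_i^k \in P_k$ and $P_k$ is normal. The work is in the reverse inclusion: we must show every $k$th power $g^k$ of a primitive element $g$ lies in $N$. By Lemma \ref{lem:primitive conjugacy classes}, $g$ belongs to the conjugacy class $\bar\psi([a])$ for some $\bar\psi \in \Out_+(F_2)$; equivalently there is a $\frac{p}{q} \in \hat{\mathQ}$ with $g \in {\mathcal P}(\frac{p}{q})$ and $\bar\psi \in {\mathcal C}(\frac{p}{q})$. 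Since $N$ is normal, it suffices to show that \emph{some} conjugate of $g^k$ lies in $N$, hence it suffices to treat one representative primitive element of each pair ${\mathcal P}(\frac{p}{q})$, up to conjugacy; and since $g^{-k}$ is a conjugate of $g^k$ when... (more precisely, $[g^{-1}] = \bar\psi_1^2([g])$ and $\bar\psi_1^2 \in D{\mathcal O}_k$'s stabilizer considerations), it suffices to treat one representative of each vertex of ${\mathcal F}_k$, which is exactly what the hypothesis provides via $\Lambda$.

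The key step is to exploit the $\Out_+(F_2)$-symmetry of $N$. First I would observe that $N$, being the normal closure of a set of $k$th powers of primitive elements indexed by one representative per ${\mathcal F}_k$-vertex, is stabilized by each $\bar\psi_i \circ \bar\psi_2^k \circ \bar\psi_i^{-1}$ acting on normal subgroups: indeed $g_i^k \in N$ and $N$ is normal, so Lemma \ref{lem:multitwist} gives $\bar\psi_i \circ \bar\psi_2^k \circ \bar\psi_i^{-1}(N) = N$, and moreover this outer automorphism acts trivially on $F_2/N$. Since by hypothesis these elements generate ${\mathcal O}_k$, the whole group ${\mathcal O}_k$ stabilizes $N$ and acts trivially on $F_2/N$. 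Now take an arbitrary primitive $g \in {\mathcal P}(\frac{p}{q})$. The vertex $\frac{p}{q}$ of ${\mathcal F}$ maps to some vertex of ${\mathcal F}_k$, which is the image of $\frac{p_i}{q_i}$ for some $i \in \Lambda$; that is, there exists $\bar\sigma \in D{\mathcal O}_k$ (equivalently a lift $\bar\Sigma \in {\mathcal O}_k \subseteq \Out_+(F_2)$) carrying $\frac{p_i}{q_i}$ to $\frac{p}{q}$, hence carrying the pair ${\mathcal P}(\frac{p_i}{q_i})$ to ${\mathcal P}(\frac{p}{q})$.

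From here I would conclude as follows. Pick a representative $\Sigma \in \Aut_+(F_2)$ of $\bar\Sigma \in {\mathcal O}_k$. Then $\Sigma$ sends the conjugacy class $[g_i]$ (or its opposite) to $[g]$ (or its opposite), so after possibly replacing $g$ by a conjugate and possibly replacing $g$ by $g^{-1}$ (both harmless for membership in the normal subgroup $N$, the latter because $g^{-k}$ is conjugate to $g^k$), we may assume $\Sigma(g_i) = g$, whence $\Sigma(g_i^k) = g^k$. But $\bar\Sigma \in {\mathcal O}_k$ acts trivially on $F_2/N$, so $\Sigma(h) N = h N$ for all $h$; applying this with $h = g_i^k \in N$ gives $g^k = \Sigma(g_i^k) \in g_i^k N = N$. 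Since $g$ was an arbitrary primitive element, $P_k \subseteq N$, completing the proof.

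The main obstacle is bookkeeping the reduction "it suffices to check one representative per ${\mathcal F}_k$-vertex": one must verify carefully that passing between the pair ${\mathcal P}(\frac{p}{q})$ and a single primitive element, and the passage between $g$ and $g^{-1}$ and between conjugates, are all absorbed either by normality of $N$ (conjugation, and $g^{-k} \sim g^k$) or by the quotient by $D{\mathcal O}_k$ in the definition \eqref{eq:F_k} of ${\mathcal F}_k$. A secondary point requiring care is that the outer automorphism $\bar\Sigma$ only determines $\Sigma(g_i)$ up to conjugacy and up to the ambiguity in ${\mathcal P}$ versus ${\mathcal C}$; matching up these ambiguities so that one genuinely gets $g^k = \Sigma(g_i^k)$ (and not merely a conjugate of it) is where Lemma \ref{lem:multitwist}'s "acts trivially on $F_2/N$" conclusion, rather than just its "$N$ is stabilized" conclusion, does the real work.
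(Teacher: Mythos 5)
Your argument is correct and follows the paper's own proof essentially verbatim: form the normal closure $N$ of the $g_i^k$, use Lemma \ref{lem:multitwist} together with the generation hypothesis to make $N$ invariant under all of ${\mathcal O}_k$, and transport the classes $[g_i^{\pm k}]$ to $g^{\pm k}$ by an element of ${\mathcal O}_k$ realizing the vertex identification in ${\mathcal F}_k$. Two small slips do not affect the conclusion: $g^{-k}$ is \emph{not} conjugate to $g^k$ in $F_2$ (replacing $g$ by $g^{-1}$ is harmless simply because $N$ is a subgroup, closed under inverses), and the ``acts trivially on $F_2/N$'' clause is both imprecise for your normalized representative $\Sigma$ (an arbitrary representative only induces an inner automorphism of $F_2/N$) and unnecessary --- the invariance $\Sigma(N)=N$ already gives $\Sigma(g_i^k)\in N$, which is all the paper's proof uses.
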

\begin{proof}
Fix the quantities above and assume all hypotheses are satisfied. Let $Q$ be the subgroup of $F_2$ normally generated by $\{ g_i^k:~i \in \Lambda\}$. Clearly $Q \subset P_k$ since each $g_i$ is primitive. We will show $P_k \subset Q$. 

As a consequence of Corollary \ref{cor:arbitrary primitive power} we know that $\bar \psi_i \circ \bar \psi_2^k \circ \bar \psi_i^{-1}$ stabilizes $Q$
for all $i \in \Lambda$. Then from the hypotheses we know each element of ${\mathcal O}_k$ stabilizes $Q$. 

To show $P_k \subset Q$, it suffices to show that if $g \in F_2$ is primitive then $g^k \in Q$. 
Fix $g$. Then there is a $\frac{p}{q} \in \hat \Q$ such that ${\mathcal P}(\frac{p}{q})=[g] \cup [g^{-1}]$. From our hypothesis on $\{ \frac{p_i}{q_i}\}$ we know there is an $i \in \Lambda$ and a $\bar \psi \in {\mathcal O}_k$ such that $D \bar \psi (\frac{p_i}{q_i})=\frac{p}{q}$. Then $\bar \psi([g_i] \cup [g_i^{-1}])=[g] \cup [g^{-1}]$. 
By definition of $Q$ we know that the conjugacy classes $[g_i^k]$ and $[g_i^{-k}]$ are contained in $Q$. Since $Q$ is ${\mathcal O}_k$-invariant and $g^k \in \bar \psi([g_i^k] \cup [g_i^{-k}])$ we have $g^k \in Q$ as desired.
\end{proof}

The following describes a combinatorial way to find the generators:

\begin{cor}
\label{cor:generators}
Fix $k \geq 2$. Let $T \subset {\mathcal F}_k$ be a tree in the $1$-skeleton of ${\mathcal F}_k$ whose vertex set coincides with the collection of all vertices of the triangulation. Let $\tilde T$ be a lift of $T$ to ${\mathcal F}$ and let $\{\frac{p_i}{q_i}:~i \in \Lambda\}$ be the vertices of $\tilde T$. Then $P_k= \langle \langle g_i^k \rangle \rangle_{i \in \Lambda}$ where each $g_i \in {\mathcal P}(\frac{p_i}{q_i})$ is chosen arbitrarily as in Theorem \ref{thm:generators}. 
\end{cor}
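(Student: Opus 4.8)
The plan is to deduce this from Theorem~\ref{thm:generators}, taking $\Lambda$ to be the vertex set of $\tilde T$, by verifying the theorem's two hypotheses. The first hypothesis --- that $\{\frac{p_i}{q_i}\}_{i\in\Lambda}$ contains exactly one representative of each preimage of a vertex of ${\mathcal F}_k$ under ${\mathcal F}\to{\mathcal F}_k$ --- is immediate: since $T$ is a tree, every vertex of ${\mathcal F}_k$ is joined to a fixed base vertex by a unique edge path, and lifting that path from a chosen lift of the base produces the unique vertex of $\tilde T$ over it; hence $\tilde T$ contains exactly one vertex of each fiber of ${\mathcal F}^{(0)}\to{\mathcal F}_k^{(0)}$. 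So it remains to check the second hypothesis: that the twists $\tau_i := \bar\psi_i\circ\bar\psi_2^k\circ\bar\psi_i^{-1}$, $i\in\Lambda$, generate ${\mathcal O}_k$.

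Write $H = \langle\tau_i : i\in\Lambda\rangle\le{\mathcal O}_k$; I would prove $H={\mathcal O}_k$ in two steps. \emph{Step 1: $[{\mathcal O}_k:H]\le 2$.} Transport everything to $\SL(2,\Z)$ via the isomorphism $D$. Deleting the vertices from ${\mathcal F}$ and from ${\mathcal F}_k$ turns ${\mathcal F}\to{\mathcal F}_k$ into a regular covering whose deck group is the image $\overline{D{\mathcal O}_k}$ of $D{\mathcal O}_k$ in $\PSL(2,\Z)$: the action of $D{\mathcal O}_k$ on ${\mathcal F}$ is free away from the vertices because ${\mathcal F}_k$ is a genuine triangulation, and the ``$k$ triangles meet at each vertex'' condition of \eqref{eq:F_k} says that a loop once around a vertex $v$ of ${\mathcal F}_k$ lifts to a path realizing the rotation by $k$ triangles about the lift of $v$, which is the image of $D\tau_v$. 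The base of this covering is a simply connected surface with a discrete puncture set, so its fundamental group is free with a basis $(c_v)$ indexed by the punctures, where $c_v$ runs out along the spanning tree $T$, once around $v$, and back. Since the monodromy is onto the deck group and sends $c_{v_i}$ to the image of $D\tau_i$, the images of the $D\tau_i$ generate $\overline{D{\mathcal O}_k}$. As $\overline{D{\mathcal O}_k} = D{\mathcal O}_k/(D{\mathcal O}_k\cap\{\pm I\})$ with $|D{\mathcal O}_k\cap\{\pm I\}|\le 2$, this forces $[{\mathcal O}_k:H]\le 2$.

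\emph{Step 2: $H={\mathcal O}_k$.} Recall ${\mathcal O}_k$ is the normal closure of $\bar\psi_2^k$ in $\Out_+(F_2)$. Since $\bar\psi_2^k=\tau_{1/0}$, since conjugation by $\bar\phi\in\Out_+(F_2)$ sends $\tau_w$ to $\tau_{D\bar\phi(w)}$, and since $\Out_+(F_2)$ acts transitively on $\hat\Q$ through $D$, we get ${\mathcal O}_k=\langle\tau_w : w\in{\mathcal F}^{(0)}\rangle$. By the first hypothesis each $w\in{\mathcal F}^{(0)}$ has the form $g\cdot\frac{p_i}{q_i}$ with $i\in\Lambda$ and $g\in{\mathcal O}_k$, so $\tau_w = g\,\tau_i\,g^{-1}$; therefore the normal closure of $\{\tau_i : i\in\Lambda\}$ inside ${\mathcal O}_k$ is all of ${\mathcal O}_k$. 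If $H$ were proper it would, by Step~1, have index $2$ and hence be normal in ${\mathcal O}_k$, so it would contain that normal closure --- a contradiction. Thus $H={\mathcal O}_k$, and Theorem~\ref{thm:generators} gives $P_k=\langle\langle g_i^k : i\in\Lambda\rangle\rangle$.

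The main obstacle is Step~1: making the covering-space argument precise in a setting where ${\mathcal F}$ has vertices of infinite valence and ${\mathcal F}\to{\mathcal F}_k$ is ramified there, so that one must choose a suitable model --- e.g.\ thicken the complex, delete the vertex stars, or pass to the dual trees --- in which the quotient map becomes an honest covering, verify that the spanning tree $T$ yields an explicit free basis of its fundamental group, and check that the local monodromy about a puncture is the rotation by exactly $k$ triangles (this last point being what forces the vertex stabilizers to be the $\langle D\tau_v\rangle$). Step~2 is brief but essential, because the covering-space computation only sees the $\PSL(2,\Z)$-action and so cannot by itself detect the central element $-I$ when it lies in ${\mathcal O}_k$.
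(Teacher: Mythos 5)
Your proposal is correct, and it reaches the key point --- that the twists $\tau_i=\bar\psi_i\circ\bar\psi_2^k\circ\bar\psi_i^{-1}$ attached to the vertices of the lifted spanning tree generate ${\mathcal O}_k$ --- by a genuinely different route than the paper. The paper stays inside the branched-cover picture: it introduces the intermediate quotient ${\mathcal F}/DQ$ for $Q=\langle\tau_i\rangle$, notes that every vertex of the image tree $T_Q$ has $k$ incident triangles, and then uses its ``link lifting observation'' together with path- and disk-lifting to show every vertex of ${\mathcal F}/DQ$ lies on $T_Q$, forcing ${\mathcal F}/DQ={\mathcal F}_k$. You instead puncture at the vertices to obtain an honest regular covering of a punctured simply connected surface, read off from the monodromy of the loops $c_v$ (based along $T$) that the images of the $D\tau_i$ generate the deck group, i.e.\ the image of $D{\mathcal O}_k$ in $\PSL(2,\Z)$, and then close the possible index-two discrepancy coming from $-I$ by a purely algebraic argument. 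That second step is a real merit of your write-up: the paper's claim that $Q={\mathcal O}_k$ is ``equivalent'' to ${\mathcal F}/DQ={\mathcal F}_k$ silently passes from subgroups of $\Out_+(F_2)\cong\SL(2,\Z)$ to their images in $\PSL(2,\Z)$, and your observation that ${\mathcal O}_k$ is generated by the vertex twists $\tau_w$, $w\in\hat\Q$, each an ${\mathcal O}_k$-conjugate of some $\tau_i$, combined with normality of a subgroup of index two, settles the point cleanly. What the paper's approach buys is never having to set up covering-space theory for the infinitely branched map ${\mathcal F}\to{\mathcal F}_k$; what yours buys is a short conceptual core (monodromy generates the deck group) once the topological model is fixed.

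Two small caveats. The loops $c_v$ are not a free basis of $\pi_1$ when $k\le 5$, where ${\mathcal F}_k$ is a sphere and the product of all of them is a relation; but you only use that they generate, which holds in both the sphere and plane cases. And the verifications you defer in Step~1 --- that deleting the vertices yields a genuine covering with simply connected total space (for instance because ${\mathcal F}$ minus its vertices deformation retracts to the dual trivalent tree), and that the local monodromy of $c_v$ is the rotation by $k$ triangles about the lift of $v$ in $\tilde T$ --- are routine and rest on exactly the facts the paper already assumes when constructing ${\mathcal F}_k$ (freeness of the action away from the vertices and the rotation description of $D\tau_i$), so they are deferred details rather than gaps.
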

\begin{proof}
We must check the hypotheses of Theorem \ref{thm:generators}. 
Define $\{\frac{p_i}{q_i}\}$ and $\{g_i\}$ as in the statement of the corollary and $\{\bar \psi_i\}$ as in Theorem \ref{thm:generators}. 
Since the vertices of $T$ include all vertices of ${\mathcal F}_k$,
we see that $\{\frac{p_i}{q_i}\}$ contains one preimage of each vertex of ${\mathcal F}_k$. 
Let $Q=\langle \bar \psi_i \circ \bar \psi_2^k \circ \bar \psi_i^{-1} \rangle \subset {\mathcal O_k}$. 
We need to show $Q={\mathcal O}_k$. 

Associated to the chain of subgroups $\{1\} \subset  Q \subset {\mathcal O}_k$ is the sequence of spaces related by covering maps branched at the vertices of the triangulations:
$${\mathcal F} \to {\mathcal F}/DQ \xrightarrow{\pi} {\mathcal F}_k.$$
Proving that $Q={\mathcal O}_k$ is equivalent to proving that $\pi$ is the trivial covering. Note that triviality will follow from Proposition \ref{prop:F_k} if all vertices of ${\mathcal F}/DQ$ have valence dividing $k$, so this is what we will prove.

Let $T_Q \subset {\mathcal F}/DQ$ denote the image of $\tilde T$ under the covering map ${\mathcal F} \to {\mathcal F}/DQ$.
Then $T_Q$ is a tree because $\pi(T_Q)=T$. Observe that each vertex of $T_Q$ is incident to $k$ triangles
because such a vertex is the image of some $\frac{p_i}{q_i} \in \tilde T$ and the action of $D(\bar \psi_i \circ \bar \psi_2^k \circ \bar \psi_i^{-1})$ on ${\mathcal F}$ rotates by $k$ triangles about $\frac{p_i}{q_i}$. 
Thus it suffices to prove that every vertex of ${\mathcal F}/DQ$ is a vertex of the tree $T_Q$. 
If this were not the case then there would be an edge of a triangle of ${\mathcal F}/DQ$ with one vertex in $T_Q$ and the other not in $T_Q$. We will show this doesn't happen. 

A key observation is the following.
Say that the {\em link} of a vertex of a triangulated surface is the union of the vertex with the interiors of incident edges and triangles. 
The {\em link lifting observation} is the observation that $\pi$ 
restricted to the link of a vertex $v_Q \in T_Q \subset {\mathcal F}/DQ$ is a bijection to the link of the image vertex $v=\pi(v_Q) \in T \subset {\mathcal F}_k$ since both $v_Q$ and $v$ are incident to $k$ triangles.

Now we return to the proof. Suppose $e_Q=\overrightarrow{v_Q w_Q}$ be an oriented edge of a triangle of ${\mathcal F}/DQ$ initiating at a vertex $v_Q$ of $T_Q$. We will show that the terminating vertex $w_Q$ also is a vertex of $T_Q$. Let $e=\overrightarrow{v w}$ be $\pi(e_Q)$.
We break into two cases. 

First, it could be that $e$ is an edge of $T$. Since $v_Q \in T_Q$ by the link lifting observation we know that $e$ has a unique lift to ${\mathcal F}_Q$ initiating at $v_Q$. Since $T_Q$ is a lift of $T$ and $e$ is an edge of $T$, this means that $e_Q$ must be an edge of $T_Q$. Thus, $w_Q$ is also a vertex of $T_Q$ as desired.

Now suppose that $e$ is not an edge of $T$. 
Since $T$ is a spanning tree, both $v$ and $w$ are vertices of $T$. As $T$ is a tree, there is a unique oriented path $p$ in $T$ joining $v$ to $w$. Let $v=p_0,p_1,\ldots,p_n=w$ be the sequence of vertices passed through by $p$. We will inductively prove $p$ has a unique lift to ${\mathcal F}/DQ$ starting at $v_Q$. This involves checking that for each $j \in \{1,\ldots,n\}$ there is a unique lift of the path $p_0, \ldots, p_j$ denoted $\tilde p_0, \ldots, \tilde p_j$ such that $\tilde p_0=v_Q$ and $\pi(\overrightarrow{\tilde p_i \tilde p_{i+1}})=
\overrightarrow{p_i p_{i+1}}$ for $i \in \{0, \ldots, j-1\}$. This is true for $j=1$ because $v_Q \in T_Q$ using the unique lifting provided by the observation above. Now we will argue the inductive step. Suppose the lift is unique up through index $j<n$. Then since $p$ is a path in $T$ and $\pi(T_Q)=T$, we must have that all vertices of the lift so far lie in $T_Q$. From the link lifting observation we know that there is a unique lift of the next edge $\overrightarrow{p_j p_{j+1}}$ completing the inductive step.

Now observe that since ${\mathcal F}_k$ is a triangulation of a simply connected surface, $p \cup e$ bounds a topological disk $\Delta$.
%Since all vertices of ${\mathcal F}_k$ lie in $T$, none are contained in the interior of $T$. %%% I agree with the referee that this sentence was unnecessary (and I'm not sure what it means...). 
Since the vertex $v_Q \in T_Q$, by the link lifting observation again, there is a unique lift $\tilde \Delta$ of $\Delta$ to ${\mathcal F}/DQ$ such that $v$ lifts to $v_Q$. From the previous paragraph, the path $p$ in the boundary of $\Delta$ lifts to a path $\tilde p$ in the boundary of $\tilde \Delta$ and contained in the tree $T_Q$. Again by the assumption, edge $e$ in the boundary of $\Delta$ lifts to $e_Q$ in the boundary of $\tilde \Delta$. Thus, $e_Q$ joins the initial point $v_Q$ of $\tilde p$ to the terminal point $w_Q$ of $\tilde p$. Since $\tilde p$ is contained in $T_Q$ we see that $w_Q \in T_Q$ as desired.
\end{proof}

\begin{conj}
The normal generators for $P_k$ provided by Corollary \ref{cor:generators} are a minimal set of normal generators. 
In particular, for $k \geq 6$, the group $F_2/P_k$ is not finitely presented.
\end{conj}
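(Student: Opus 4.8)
The plan is to reduce the conjecture to a single statement about normal closures of primitive powers, and then to attack that statement through the combinatorial geometry of the presentation $\langle a,b\mid g_v^k:\ v\in\mathcal F_k^{(0)}\rangle$ of $F_2/P_k$ furnished by Corollary~\ref{cor:generators}. \emph{Reductions:} since $F_2$ is free, $F_2/P_k$ is finitely presented precisely when $P_k$ is the normal closure of a finite subset of $F_2$; and since $P_k$ is itself the subgroup generated by $k$-th powers of primitives, any finite normal generating set can be replaced by one of the form $\{g_1^k,\dots,g_m^k\}$ with each $g_i$ primitive (expand each generator as a finite product of conjugates of primitive $k$-th powers and collect the finitely many primitives used). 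Finally, two primitive elements determine the same vertex of $\mathcal F_k$ exactly when they lie in the same union of a pair of opposite conjugacy classes, in which case their $k$-th powers have the same normal closure. So the whole conjecture follows from the implication: \emph{if $I$ is a set of primitive elements of $F_2$, $g$ is primitive, and $g^k\in\langle\langle h^k:h\in I\rangle\rangle$, then $g$ and some $h\in I$ determine the same vertex of $\mathcal F_k$.}

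This implication suffices. Applying it to the generators $g_i$ of Corollary~\ref{cor:generators}: if $P_k=\langle\langle h^k:h\in I\rangle\rangle$ then the vertices determined by the $h\in I$ must exhaust $\mathcal F_k^{(0)}$, which for $k\ge 6$ is infinite (the vertex set of a triangulation of the Euclidean or hyperbolic plane), so $I$ is infinite and $F_2/P_k$ is not finitely presented. The same implication shows that the generating set of Corollary~\ref{cor:generators} --- which has exactly one generator per vertex of $\mathcal F_k$ --- is irredundant: deleting the generator at a vertex $v$ removes $v$ from the list of vertices determined by the remaining $h$, so $g_v^k$ leaves the normal closure even though $g_v^k\in P_k$. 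A convenient equivalent reformulation is that the above presentation of $F_2/P_k$ is combinatorially (diagrammatically) aspherical; then the relation module $P_k/[F_2,P_k]$ is the free $\Z[F_2/P_k]$-module on the relators $\{g_v^k\}$, which has infinite rank and so is not finitely generated --- whereas a finite presentation of $F_2/P_k$ would make this module finitely generated (indeed, $F_2/P_k$ would fail to be of type $FP_2$).

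The main obstacle is proving the implication, equivalently the asphericity. Classical small-cancellation theory does not apply directly: for a cyclically reduced primitive $g$ the cyclic word $g^k$ contains a piece of length $(k-1)|g|$, so $\langle a,b\mid g_v^k\rangle$ satisfies neither $C'(1/6)$ nor $C(6)$. Since the relators are $k$-th powers one is in the regime of the graded (periodic) small-cancellation machinery of Ol'shanskii --- or the original Novikov--Adian analysis --- developed for free Burnside groups, now complicated by the presence of infinitely many relators; the structure one would hope to exploit is the $\Out_+(F_2)$-symmetry of the full relator set together with the control that the Farey tessellation gives on how distinct relators overlap, so that a van Kampen diagram over $\{g_v^k\}$ decomposes along the combinatorics of $\mathcal F_k$. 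The delicate case is $k=6$, where $\mathcal F_6$ is the Euclidean triangular lattice and the presentations sit at critical curvature; note that at the level of $\Out_+(F_2)\cong\SL(2,\Z)$ the rotation subgroup $\mathcal O_6$ is still the normal closure of a single element, so the failure of finite presentability is genuinely a phenomenon of $F_2$, not of $\Out_+(F_2)$, and cannot be imported from $\SL(2,\Z)$. A complementary, possibly softer route is, for each prescribed ``missing'' vertex $v_0$, to build an explicit quotient of $F_2/\langle\langle h^k:h\in I\rangle\rangle$ in which $g_{v_0}^k\ne 1$ by refining a linear representation as in the later sections of this paper; the obstacle then becomes making such a construction uniform over all admissible $I$ and all $v_0$.
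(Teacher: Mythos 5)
The statement you are working on is stated in the paper only as a conjecture, with no proof given, and your proposal does not close that gap: it is a reduction plus a survey of strategies rather than a proof. The reductions are sound --- since $F_2$ is finitely generated free, finite presentability of $F_2/P_k$ is equivalent to $P_k$ being the normal closure of a finite set; any finite normal generating set inside $P_k$ can be replaced by finitely many primitive $k$-th powers; and the corollary's generators are one per vertex of $\mathcal F_k$, so both minimality and non-finite-presentability would indeed follow from your displayed implication that $g^k\in\langle\langle h^k:h\in I\rangle\rangle$ forces $g$ to share an $\mathcal F_k$-vertex with some $h\in I$. But that implication (or the asphericity/free-relation-module statement you offer as a substitute) is precisely the hard content of the conjecture: Theorem \ref{thm:generators} and Corollary \ref{cor:generators} give only the ``upper bound'' direction, that these powers do normally generate $P_k$, and nothing in the paper gives a lower bound. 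Neither of your proposed routes is carried out. You correctly observe that classical $C'(1/6)$ or $C(6)$ small cancellation fails because $g^k$ has a self-overlap of length $(k-1)|g|$, but pointing to Ol'shanskii's graded machinery or Novikov--Adian is not a step of a proof: one would still have to control the overlaps among infinitely many relators of unbounded length, and the critical case $k=6$ lies outside hyperbolic-type arguments, as you yourself note. The representation-theoretic alternative (for each missing vertex $v_0$ and each admissible $I$, a quotient in which $g_{v_0}^k$ survives) is likewise only a proposal. So the essential idea is missing, not merely under-explained.

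Two smaller inaccuracies are worth flagging. First, two primitive elements lie in the same union of a pair of opposite conjugacy classes exactly when they determine the same vertex of $\mathcal F$ (the same point of $\hat\Q$), not of $\mathcal F_k$; the latter is the coarser relation of lying in the same $\mathcal O_k$-orbit, and it is not established (it is part of what you would need to prove) that two primitives sharing only an $\mathcal F_k$-vertex have $k$-th powers with equal normal closures. Your argument only uses the harmless direction --- conjugate or inverse primitives give equal normal closures --- so the logic survives, but the ``exactly when'' should be corrected. Second, ``equivalently the asphericity'' overstates the relationship: diagrammatic asphericity of $\langle a,b\mid g_v^k\rangle$ would imply your implication (and, via freeness of the relation module $P_k/[P_k,F_2]$ on infinitely many relators, failure of finite presentability for $k\ge 6$), but you have not argued the converse, so asphericity is a sufficient and possibly strictly stronger target.
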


\subsection{Normal generators for \texorpdfstring{$P_k$}{P\_k} with \texorpdfstring{$k \leq 5$}{k less than or equal to 5}}
We describe normal generators for $P_k$ when $k\leq 5$ because these are the cases where Corollary \ref{cor:generators} 
yields a finite set of normal generators. These cases are finite because Proposition \ref{prop:F_k} tells us that ${\mathcal F}_k$ is a triangulated sphere.

\subsection*{The case \texorpdfstring{$k=2$}{k=2}}
The triangulated sphere ${\mathcal F}_2$ is the double of a triangle across its boundary. Below we depict a tree $T$ in an unfolding of ${\mathcal F}_2$.
We have lifted $T$ to a tree $\tilde T$ in the Farey triangulation and labeled the vertices of $T$ by their lifts as elements of $\hat \Q$.
Following Theorem \ref{thm:generators} and Corollary \ref{cor:generators}, we have converted these elements of $\hat \Q$ to normal generators of $P_2$.  

\begin{minipage}{0.5\textwidth}
\begin{center}
\includegraphics[scale=1]{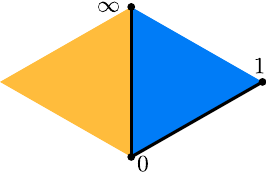}
\end{center}
\end{minipage}
\begin{minipage}{0.5\textwidth}
\begin{center}
\begin{tabular}{ll}
\toprule
Vertex & Generator of $P_2$ \\
\midrule
$\infty$ & $a^2$ \\
$0$ & $b^2$ \\
$1$ & $(ab)^2$\\
\bottomrule
\end{tabular}
\end{center}
\end{minipage}

\begin{prop}
The quotient $F_2/P_2$ is isomorphic to the Klein four-group.
\end{prop}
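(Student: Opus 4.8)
The plan is to combine the explicit normal generating set for $P_2$ coming from Corollary \ref{cor:generators} with an elementary analysis of the resulting two-generator presentation, and then to exhibit a surjection onto the Klein four-group in order to pin the order down exactly.

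\textbf{Step 1 (a presentation).} By Corollary \ref{cor:generators} applied with $k = 2$, using the spanning tree of the two-triangle sphere ${\mathcal F}_2$ whose lifted vertices $\infty, 0, 1$ and associated primitive elements $a, b, ab$ are recorded in the table above, the subgroup $P_2$ is the normal closure in $F_2$ of $\{a^2, b^2, (ab)^2\}$. Hence $F_2/P_2$ admits the presentation $\langle a, b \mid a^2, b^2, (ab)^2 \rangle$.

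\textbf{Step 2 (upper bound).} In this presentation the relations $a^2 = b^2 = 1$ give $a^{-1} = a$ and $b^{-1} = b$, so $(ab)^2 = 1$ rewrites as $ab = (ab)^{-1} = b^{-1} a^{-1} = ba$. Thus $a$ and $b$ commute, so $F_2/P_2$ is abelian; being generated by two commuting elements each of order dividing $2$, it is a quotient of $\Z/2 \times \Z/2$, and in particular $|F_2/P_2| \le 4$.

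\textbf{Step 3 (lower bound).} Consider the map $\phi \colon F_2 \to \Z/2 \times \Z/2$ obtained by reducing $\mathrm{ab}$ modulo $2$, i.e.\ $a \mapsto (1,0)$, $b \mapsto (0,1)$; it is surjective. If $g \in F_2$ is primitive then $\mathrm{ab}(g) = \pm(p,q)$ with $p,q$ relatively prime, so $\phi(g^2) = 2(p,q) \equiv (0,0)$. Hence every generator of $P_2$ lies in $\ker \phi$, so $\phi$ descends to a surjection $F_2/P_2 \twoheadrightarrow \Z/2 \times \Z/2$, giving $|F_2/P_2| \ge 4$. Combining the two bounds yields $F_2/P_2 \cong \Z/2 \times \Z/2$, and also shows $[F_2 : P_2] = 4$.

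There is no serious obstacle here: each step is either quoted from Corollary \ref{cor:generators} or a one-line manipulation. The only point requiring a moment's care is confirming that $a^2, b^2, (ab)^2$ \emph{normally generate} $P_2$, and not merely lie in it — which is exactly what Corollary \ref{cor:generators} supplies once one checks, as displayed in the table, that $\infty, 0, 1$ are the vertices of a lift of a spanning tree of ${\mathcal F}_2$.
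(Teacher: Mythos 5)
Your proof is correct and follows essentially the same route as the paper: both obtain the presentation $\langle a,b \mid a^2, b^2, (ab)^2\rangle$ from Corollary \ref{cor:generators} and then deduce commutativity from the relations. The only difference is that you make explicit the lower bound via the mod-$2$ abelianization, a point the paper leaves as ``clearly $a$ and $b$ have order two,'' so this is a slightly more careful write-up of the same argument rather than a different approach.
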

\begin{proof}
Since all elements of the Klein four-group $K=\langle a,b~|~a^2,b^2,[a,b]\rangle$
have order two, $K$ is a quotient of $F_2/P_2$. Therefore, it suffices to prove the defining relations hold in $F_2/P_2$.
Clearly $a^2=b^2=1$ in $F_2/P_2$ since $a$ and $b$ are primitive in $F_2$. Thus, $a=a^{-1}$ and $b=b^{-1}$. It follows
that $[a,b]=(ab)^2=1$ since $ab$ is primitive in $F_2$.
\end{proof}
%\begin{proof}
%We have 
%$$F_2/P_2=\langle a,b~|~a^2,b^2,(ab)^2\rangle.$$
%Clearly $a$ and $b$ have order two in this quotient. We have
%$[a,b]=(ab)^2=1$ since $a=a^{-1}$ and $b=b^{-1}$. 
%\end{proof}

\subsection*{The case \texorpdfstring{$k=3$}{k=3}}
The triangulated sphere ${\mathcal F}_3$ is a tetrahedron. Below we depict a tree $T$ in an unfolded copy of the tetrahedron.
We have lifted $T$ to a tree $\tilde T$ in the Farey triangulation and labeled the vertices of $T$ by their lifts as elements of $\hat \Q$.
Following Theorem \ref{thm:generators} and Corollary \ref{cor:generators}, we have converted these elements of $\hat \Q$ to normal generators of $P_3$.  

\begin{minipage}{0.5\textwidth}
\begin{center}
\includegraphics[scale=1]{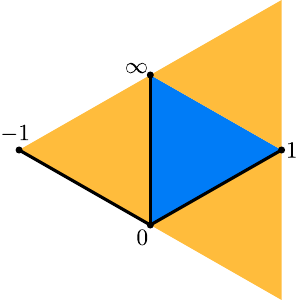}
\end{center}
\end{minipage}
\begin{minipage}{0.5\textwidth}
\begin{center}
\begin{tabular}{ll}
\toprule
Vertex & Generator of $P_3$ \\
\midrule
$\infty$ & $a^3$ \\
$0$ & $b^3$ \\
$1$ & $(ab)^3$\\
$-1$ & $(a b^{-1})^3$\\
\bottomrule
\end{tabular}
\end{center}
\end{minipage}
%
%\begin{figure}
%\includegraphics[scale=.2]{CubesCover.png}
%\caption{Cover corresponding to the $H(\Z)$.}
%\label{fig:H3}
%\end{figure}

\begin{prop}
The quotient $F_2/P_3$ is isomorphic to $H(\Z/3\Z)$.
%, the group of upper triangular matrices with entries in $\Z/3\Z$ and with ones along the diagonal.  
\end{prop}
\begin{proof}
In $H(\Z/3\Z)$, all elements have order three. Thus, $H(\Z/3\Z)$ is a quotient of $F_2/P_3$ and so it suffices to prove
that relations defining $H(\Z/3\Z)$ are satisfied in $F_2/P_3$. We work with the presentation
$$H(\Z) = H(\Z/3\Z)=\left\langle a, b~\Big|~ a^3, b^3, [a,b]^3, \big[a,[a,b]\big], \big[b,[a,b]\big]\right\rangle.$$
Since $a$ and $b$ are primitive, we have $a^3=b^3=1$ in $F_2/P_3$. Also we have
\begin{eqnarray*}
\big[a, [a, b]\big] &=& a^{-1} (b^{-1} a^{-1} b a) a (a^{-1} b^{-1} a b) \\
&=& a^{-1} b^{-1} a^{-1} b a b^{-1} a b \\
&=& (a^{-1} b^{-1})^2 b^2 a b^{-1} a b.
\end{eqnarray*}
Since $a^{-1} b^{-1}$ is primitive in $F^2$, we have $(a^{-1} b^{-1})^3=1$ and thus continuing,
\begin{eqnarray*}
\big[a, [a, b]\big]
&=& b a b^2 a b^{-1} a b.\\
&=& b a b^{-1} a b^{-1} a b.\\
&=& b (a b^{-1})^3 b a^{-1} a b\\
&=&  b^{2} a^{-1} a b \\
&=& 1.
\end{eqnarray*}
Further, since $P_3$ is characteristic, we get $[b, [a,b]] = 1$.
It follows that $[a,b]$ is central, thus $[a,b]^3 = [a^3, b] = 1$ via commutator identities, completing the proof.
\end{proof}

\subsection*{The case \texorpdfstring{$k=4$}{k=4}}
The triangulated sphere ${\mathcal F}_4$ is an octahedron. Below we depict a tree $T$ in an unfolded copy of the octahedron.
We have lifted $T$ to a tree $\tilde T$ in the Farey triangulation and labeled the vertices of $T$ by their lifts as elements of $\hat \Q$.
Following Theorem \ref{thm:generators} and Corollary \ref{cor:generators}, we have converted these elements of $\hat \Q$ to normal generators of $P_4$.  

\begin{minipage}{0.5\textwidth}
\begin{center}
\includegraphics[scale=.8]{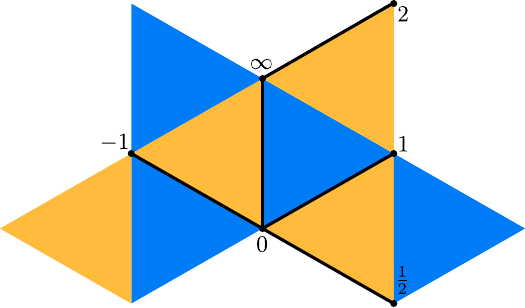}
\end{center}
\end{minipage}
\begin{minipage}{0.6\textwidth}
\begin{center}
\begin{tabular}{ll}
\toprule
Vertex & Generator of $P_4$ \\
\midrule
$\infty$ & $a^4$ \\
$0$ & $b^4$ \\
$1$ & $(ab)^4$\\
$-1$ & $(ab^{-1})^4$\\
$2$ & $(a^2 b)^4$\\
$\frac{1}{2}$ & $(a b^2)^4$\\
\bottomrule
\end{tabular}
\end{center}
\label{table:P4}
\end{minipage}

\subsection*{The case \texorpdfstring{$k=5$}{k=5}}
The triangulated sphere ${\mathcal F}_5$ is an icosahedron. Below we depict a tree $T$ in an unfolded copy of the icosahedron.
We have lifted $T$ to a tree $\tilde T$ in the Farey triangulation and labeled the vertices of $T$ by their lifts as elements of $\hat \Q$.
Following Theorem \ref{thm:generators} and Corollary \ref{cor:generators}, we have converted these elements of $\hat \Q$ to normal generators of $P_5$.  

\begin{center}
\includegraphics[scale=.8]{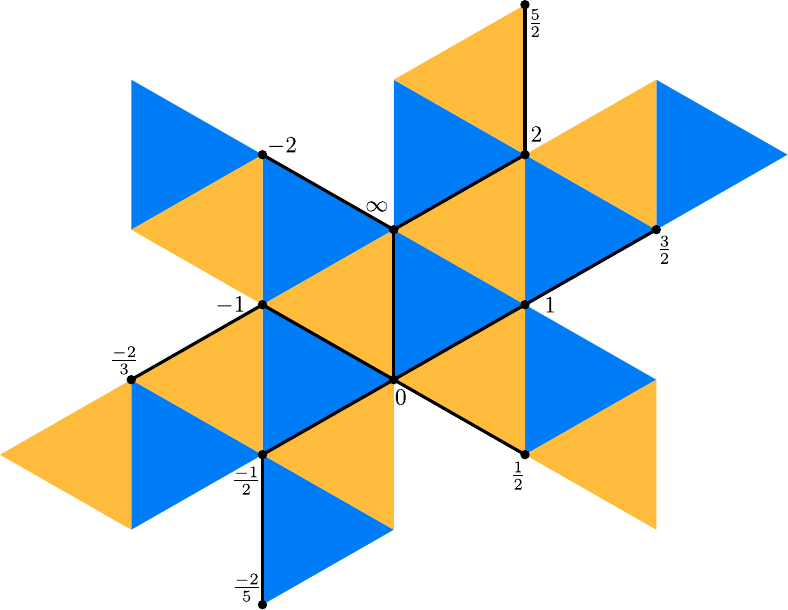}
\end{center}

\begin{center}
\begin{tabular}{ll}
\toprule
Vertex & Generator of $P_5$ \\
\midrule
$\infty$ & $a^5$ \\
$0$ & $b^5$ \\
$1$ & $(ab)^5$\\
$-1$ & $(a b^{-1})^5$\\
$2$ & $(a^2b)^5$\\
$\frac{1}{2}$ & $(ab^2)^5$\\
$-2$ & $(a^{2}b^{-1})^5$\\
$\frac{-1}{2}$ & $(a b^{-2})^5$\\
$\frac{3}{2}$ & $(a^2bab)^5$\\
$\frac{-2}{3}$ & $(a b^{-1} a b^{-2})^5$\\
$\frac{5}{2}$ & $(a^3ba^2b)^5$\\
$\frac{-2}{5}$ & $(a b^{-2} a b^{-3})^5$\\
\bottomrule
\end{tabular}
\end{center}
\label{table:P5}

\section{Characteristic representations}
\label{sect:representations}

\subsection{Definition and a criterion}

We say that a homomorphism $\rho:F_2 \to \GL(n,\C)$ is a {\em characteristic representation} if for any $\psi \in \Aut(F_2)$ there is a $\Psi \in \Aut\big(\GL(n,\C)\big)$ such that
\begin{equation}
\label{eq:characteristic representation}
\Psi \circ \rho \circ \psi^{-1} (g) = \rho (g) \quad \text{for all $g \in F_2$}.
\end{equation}
The following should be clear:

\begin{prop}
\label{prop:characteristic kernel}
The kernel of a characteristic representation is a characteristic subgroup of $F_2$.
\end{prop}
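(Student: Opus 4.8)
The plan is to show directly that $\ker\rho$ satisfies the defining property of a characteristic subgroup, namely that $\psi(\ker\rho)=\ker\rho$ for every $\psi\in\Aut(F_2)$. Since $\Aut(F_2)$ is generated by $\psi$ and $\psi^{-1}$ ranges over all automorphisms as $\psi$ does, it suffices to show the containment $\psi(\ker\rho)\subset\ker\rho$ for all $\psi$; applying this to $\psi^{-1}$ as well gives equality.

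First I would fix $\psi\in\Aut(F_2)$ and invoke the characteristic-representation hypothesis to obtain $\Psi\in\Aut(\GL(n,\C))$ with $\Psi\circ\rho\circ\psi^{-1}=\rho$ as maps on $F_2$. The point is that an automorphism of $\GL(n,\C)$ sends the identity element to the identity element, so $\Psi^{-1}(I)=I$. Now take any $g\in\ker\rho$, i.e.\ $\rho(g)=I$. I want to show $\psi(g)\in\ker\rho$, that is $\rho(\psi(g))=I$. Applying the hypothesis with the element $\psi(g)$ in place of $g$: we have $\Psi\circ\rho\circ\psi^{-1}(\psi(g))=\rho(\psi(g))$, and the left-hand side is $\Psi(\rho(g))=\Psi(I)=I$. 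Hence $\rho(\psi(g))=I$, so $\psi(g)\in\ker\rho$. This gives $\psi(\ker\rho)\subset\ker\rho$, and running the same argument with $\psi$ replaced by $\psi^{-1}$ yields the reverse inclusion, so $\psi(\ker\rho)=\ker\rho$.

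There is really no serious obstacle here — the statement is essentially a formal unwinding of definitions. The only thing worth being careful about is the direction of the composition in \eqref{eq:characteristic representation}: the hypothesis is stated with $\psi^{-1}$, so one should track whether one is proving $\psi$-invariance or $\psi^{-1}$-invariance, but since $\psi$ ranges over all of $\Aut(F_2)$ this distinction evaporates. One might phrase the conclusion slightly more slickly by noting that $\Psi$ restricts to a bijection of $\rho(F_2)$ onto itself (since $\Psi\circ\rho\circ\psi^{-1}=\rho$ shows $\Psi(\rho(F_2))=\rho(F_2)$) fixing the identity, and then $\ker(\rho\circ\psi^{-1})=\ker(\Psi\circ\rho\circ\psi^{-1})=\ker\rho$, whence $\psi^{-1}(\ker\rho)=\ker(\rho\circ\psi^{-1})=\ker\rho$; but the element-chase above is perfectly adequate and perhaps more transparent.
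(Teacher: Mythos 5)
Your proof is correct; the paper offers no proof at all (it introduces the proposition with ``The following should be clear''), and your element-chase using $\Psi(I)=I$ together with the identity $\Psi\circ\rho\circ\psi^{-1}=\rho$ is exactly the formal unwinding the authors had in mind. Nothing to add.
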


Recall from \S \ref{sect:primitive} that $\Aut(F_2)=\Aut_+(F_2) \cup \Aut_-(F_2)$.
Our automorphisms of $\GL(n,\C)$ will have one of two forms corresponding to this partition. If $M \in \GL(n,\C)$ then we define 
\begin{equation}
\Psi_M,\bPsi_M \in \Aut\big(\GL(n,\C)\big) 
\quad \text{by} \quad
\Psi_M(X)=MXM^{-1}
\quad \text{and} \quad
\bPsi_M(X)=M\overline{X}M^{-1}.
\end{equation}
We call the map $\bPsi_M$ a {\em conjugate inner automophism}.
%The group of inner automorphisms is $\Inn\big(\GL(n,\C)\big)=\{\Psi_M:~M \in \GL(n,\C\}$
%and the collection of {\em conjugate-inner automorphisms} is $\bInn\big(\GL(n,\C)\big)=\{\bPsi_M:~M \in \GL(n,\C)\}$.
%The union
%$\Inn\big(\GL(n,\C)\big) \cup \bInn\big(\GL(n,\C)\big)$ is a subgroup of $\Aut\big(\GL(n,\C)\big)$.

We say $\rho:F_2 \to \GL(n,\C)$ is an {\em oriented characteristic representation} if the following two statements hold:
\begin{itemize}
\item[(+)] For each $\psi \in \Aut_+(F_2)$ there is an $M \in \GL(n,\C)$
so that \eqref{eq:characteristic representation} holds with $\Psi=\Psi_M$.
\item[(--)] For each $\psi \in \Aut_-(F_2)$ there is an $M \in \GL(n,\C)$
so that \eqref{eq:characteristic representation} holds with $\Psi=\bPsi_M$.
\end{itemize}
%Observe that oriented characteristic representations are examples of characteristic representations.
We will be working exclusively with oriented characteristic representations. 

Based on properties of the tensor product, it can be observed:
\begin{prop}
\label{prop:basic operation}
If $\rho_1:F_2 \to \GL(n_1,\C)$ and $\rho_2:F_2 \to \GL(n_2,\C)$ are oriented characteristic representations then so is their tensor product $\rho_1 \otimes \rho_2:F_2 \to \GL(n_1 n_2,\C)$ and so is the complex-conjugate representation $\overline{\rho_1}$.
\end{prop}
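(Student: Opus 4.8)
The plan is to verify the two defining clauses of an \emph{oriented} characteristic representation directly, using only the multiplicativity of the tensor product and of complex conjugation. First I would record the elementary identities that do all the work: for matrices of compatible sizes, $(A \otimes B)(C \otimes D) = (AC) \otimes (BD)$, whence $(A \otimes B)^{-1} = A^{-1} \otimes B^{-1}$; moreover $\overline{A \otimes B} = \overline{A} \otimes \overline{B}$, $\overline{AB} = \overline{A}\,\overline{B}$, $\overline{A^{-1}} = (\overline{A})^{-1}$, and $\overline{\overline{A}} = A$. The first of these already shows that $\rho_1 \otimes \rho_2 \colon g \mapsto \rho_1(g) \otimes \rho_2(g)$ is a homomorphism $F_2 \to \GL(n_1 n_2,\C)$, and that $\overline{\rho_1} \colon g \mapsto \overline{\rho_1(g)}$ is one as well.

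For the tensor product, fix $\psi \in \Aut_+(F_2)$. By hypothesis there are $M_1, M_2$ with $M_i\, \rho_i(\psi^{-1}(g))\, M_i^{-1} = \rho_i(g)$ for all $g$ and $i=1,2$. Put $M = M_1 \otimes M_2$; then the identities above give $M\,(\rho_1 \otimes \rho_2)(\psi^{-1}(g))\,M^{-1} = \big(M_1 \rho_1(\psi^{-1}(g)) M_1^{-1}\big) \otimes \big(M_2 \rho_2(\psi^{-1}(g)) M_2^{-1}\big) = \rho_1(g) \otimes \rho_2(g)$, so $\Psi_M \in \Inn\big(\GL(n_1 n_2,\C)\big)$ witnesses \eqref{eq:characteristic representation} for $\rho_1 \otimes \rho_2$. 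For $\psi \in \Aut_-(F_2)$ the same computation starting from $M_i\, \overline{\rho_i(\psi^{-1}(g))}\, M_i^{-1} = \rho_i(g)$, together with $\overline{A \otimes B} = \overline{A}\otimes\overline{B}$ applied to $\overline{(\rho_1\otimes\rho_2)(\psi^{-1}(g))}$, produces $\bPsi_M$ with $M = M_1 \otimes M_2$.

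For the complex conjugate, fix $\psi \in \Aut_+(F_2)$ with witness $M$, so $M\, \rho_1(\psi^{-1}(g))\, M^{-1} = \rho_1(g)$; conjugating both sides yields $\overline{M}\, \overline{\rho_1}(\psi^{-1}(g))\, \overline{M}^{-1} = \overline{\rho_1}(g)$, so $\Psi_{\overline{M}}$ works for $\overline{\rho_1}$. For $\psi \in \Aut_-(F_2)$ with witness $M$, so $M\, \overline{\rho_1(\psi^{-1}(g))}\, M^{-1} = \rho_1(g)$; conjugating gives $\overline{M}\, \rho_1(\psi^{-1}(g))\, \overline{M}^{-1} = \overline{\rho_1}(g)$, and since $\rho_1(\psi^{-1}(g)) = \overline{\overline{\rho_1}(\psi^{-1}(g))}$ this is exactly $\overline{M}\, \overline{\overline{\rho_1}(\psi^{-1}(g))}\, \overline{M}^{-1} = \overline{\rho_1}(g)$, i.e. $\bPsi_{\overline{M}}$ works for $\overline{\rho_1}$. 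In every construction an automorphism in $\Aut_+(F_2)$ is matched with an element of $\Inn$ and one in $\Aut_-(F_2)$ with an element of $\bInn$, so the new representations are again oriented.

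I do not expect a genuine obstacle: the content is purely formal. The only points requiring a little care are the case split along the partition $\Aut(F_2) = \Aut_+(F_2) \cup \Aut_-(F_2)$ — one must check that an $\Aut_+$ automorphism still lands in $\Inn$ rather than $\bInn$ after the operation — and, in the $\Aut_-$ clause for $\overline{\rho_1}$, the bookkeeping with complex conjugation, where the identity $\overline{\overline{X}} = X$ is precisely what reconciles the two sides.
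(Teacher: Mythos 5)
Your proof is correct, and it is exactly the routine verification the paper has in mind: the paper gives no written proof, stating only that the proposition "can be observed" from properties of the tensor product, and your case-by-case check (witness $M_1 \otimes M_2$ for $\rho_1 \otimes \rho_2$, witness $\overline{M}$ for $\overline{\rho_1}$, with the $\Aut_+(F_2)/\Aut_-(F_2)$ bookkeeping handled via $\overline{\overline{X}}=X$) is the intended argument. No gaps.
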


We will now give an elementary method to prove that a homomorphism $\rho$ is an oriented characteristic representation. We single out elements $\psi_1, \psi_2 \in \Aut_+(F_2)$ and $\psi_- \in \Aut_-(F_2)$ whose images in $\Out(F_2)$ generate:
\begin{equation}
\label{eq:generators for Out}
\psi_1(a)=b,~\psi_1(b)=a^{-1}; \quad \psi_2(a)=a,~\psi_2(b)=ab; 
\quad \psi_-(a)=a^{-1},~\psi_-(b)=b.
\end{equation}

We have the following criterion for checking if a representation is oriented characteristic:
\begin{prop}
\label{prop:characteristic criterion}
Let $\rho:F_2 \to \GL(n,\C)$ be a homomorphism. Then $\rho$ is an oriented characteristic representation if and only if the following statements are satisfied:
\begin{enumerate}
\item[(1)] There is an $M_1 \in \GL(n,\C)$ such that $M_1 =\rho(a) M_1 \rho(b)$
and $M_1 \rho(a) = \rho(b) M_1$.
\item[(2)] There is an $M_2 \in \GL(n,\C)$ such that $M_2 \rho(a)=\rho(a) M_2$ and
$M_2 \rho(b) = \rho(ab) M_2.$
\item[(--)] There is an $M_- \in \GL(n,\C)$ such that $M_-=\rho(a) M_- \overline{\rho(a)}$ and
$M_- \overline{\rho(b)} = \rho(b) M_-.$
\end{enumerate}
\end{prop}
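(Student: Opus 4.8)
The plan is to prove the two implications of this ``if and only if'' separately, with the forward direction a routine specialization and the backward direction carrying the content.

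For the forward implication, suppose $\rho$ is an oriented characteristic representation. Applying the defining condition \eqref{eq:characteristic representation} to $\psi_1 \in \Aut_+(F_2)$ produces an $M_1 \in \GL(n,\C)$ with $\Psi_{M_1} \circ \rho \circ \psi_1^{-1} = \rho$, i.e.\ $M_1 \rho(\psi_1^{-1}(g)) M_1^{-1} = \rho(g)$ for all $g$; substituting $\psi_1(h)$ for $g$ rewrites this as $M_1 \rho(h) = \rho(\psi_1(h)) M_1$ for all $h \in F_2$. Evaluating at $h = a$ and $h = b$ and using $\psi_1(a) = b$, $\psi_1(b) = a^{-1}$ gives exactly the two equations of (1). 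The identical computation with $\psi_2$ yields (2), and with $\psi_- \in \Aut_-(F_2)$ --- now using $\bPsi_{M_-}(X) = M_- \overline{X} M_-^{-1}$ and that $\psi_-$ is an involution --- yields (--).

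For the converse I would assume (1), (2), (--) and consider the set $S \subseteq \Aut(F_2)$ of automorphisms that can be ``realized'' in the appropriate sense: those $\psi \in \Aut_+(F_2)$ for which some inner $\Psi_M$ satisfies \eqref{eq:characteristic representation}, together with those $\psi \in \Aut_-(F_2)$ for which some conjugate-inner $\bPsi_M$ does. Reversing the computation of the forward direction, hypotheses (1), (2), (--) say precisely that the relevant matrix identity holds on the generators $a, b$ for $\psi_1, \psi_2, \psi_-$ respectively; since for fixed $M$ the set of $h \in F_2$ with $M\rho(h) = \rho(\psi(h))M$ (or its conjugated analogue) is visibly a subgroup of $F_2$, it holds for all $h$, so $\psi_1, \psi_2, \psi_- \in S$. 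Moreover each inner automorphism $\iota_h \in \Inn(F_2)$ lies in $\Aut_+(F_2)$ and is realized by $\Psi_{\rho(h)}$, so $\Inn(F_2) \subseteq S$. Once we know in addition that $S$ is a subgroup of $\Aut(F_2)$, we are done: $S$ then contains the normal subgroup $\Inn(F_2)$ and, because the images of $\psi_1, \psi_2, \psi_-$ generate $\Out(F_2)$, surjects onto $\Out(F_2) = \Aut(F_2)/\Inn(F_2)$, forcing $S = \Aut(F_2)$.

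It remains to show $S$ is a subgroup. If $\psi, \phi \in S$ are realized by automorphisms $\Phi_\psi, \Phi_\phi$ of $\GL(n,\C)$, then a one-line rearrangement of \eqref{eq:characteristic representation} shows $\Phi_\psi \circ \Phi_\phi$ realizes $\psi\phi$ and $\Phi_\psi^{-1}$ realizes $\psi^{-1}$; and the identities $\Psi_M\Psi_N = \Psi_{MN}$, $\Psi_M\bPsi_N = \bPsi_{MN}$, $\bPsi_M\Psi_N = \bPsi_{M\overline N}$, $\bPsi_M\bPsi_N = \Psi_{M\overline N}$, $\Psi_M^{-1} = \Psi_{M^{-1}}$, $\bPsi_M^{-1} = \bPsi_{\overline M^{-1}}$ --- together with the fact, noted in the text, that $\Inn(\GL(n,\C)) \cup \bInn(\GL(n,\C))$ is a group --- show these composites land back in $\Inn(\GL(n,\C)) \cup \bInn(\GL(n,\C))$, being inner exactly when $\psi\phi$ (resp.\ $\psi^{-1}$) lies in $\Aut_+(F_2)$ and conjugate-inner exactly when it lies in $\Aut_-(F_2)$. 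I expect this last point --- keeping the ``orientation'' of the realizing automorphism of $\GL(n,\C)$ synchronized, under composition and inversion, with whether the $F_2$-automorphism preserves or swaps the conjugacy classes of $[a,b]$ and $[b,a]$ --- to be the only delicate part; everything else reduces to checking matrix identities on the two generators $a$ and $b$.
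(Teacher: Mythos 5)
Your proposal is correct and follows essentially the same route as the paper: the paper likewise dismisses the ``only if'' direction as a specialization of \eqref{eq:characteristic representation} to $(\psi_1,\Psi_{M_1})$, $(\psi_2,\Psi_{M_2})$, $(\psi_-,\bPsi_{M_-})$ on the generators $a,b$, and for the converse forms the group $\Delta_+\cup\Delta_-$ of realized pairs (your set $S$ is just its projection to $\Aut(F_2)$), checks that inner automorphisms are realized by $\Psi_{\rho(h)}$ and that $\psi_1,\psi_2,\psi_-$ are realized, and concludes by the same generation fact. Your extra care with the $\Psi/\bPsi$ composition and inversion identities simply makes explicit what the paper summarizes as ``$\Delta_+\cup\Delta_-$ is a group.''
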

We remark that the equations in the respective statements above are simple algebraic manipulations 
of \eqref{eq:characteristic representation} in the special cases where
$(\psi,\Psi)$ is taken to be one the pairs $(\psi_1, \Psi_{M_1})$, $(\psi_2, \Psi_{M_2})$ or
$(\psi_-, \bPsi_{M_-})$ and $g$ is restricted to a pair of generators of $F_2$. (For (1) and (-) we use generators $a$ and $b$, while in (2) we use $a$ and $ab$.)
Thus the ``only if'' direction is clear.
\begin{proof}[Proof of ``if'' direction]
Assume statements (1) (2) and (--) of the proposition hold. We must prove statements (+) and (--) of the definition of oriented characteristic definition. Let 
\begin{equation}
    \label{eq:Delta}
\begin{array}{ll}
\Delta_+=\big\{(M,\psi) \in \GL(n,\C) \times \Aut_+(F_2):~
\text{\eqref{eq:characteristic representation} holds with $\Psi=\Psi_M$}\big\} 
& \text{and} \\
\Delta_-=\big\{(M,\psi) \in \GL(n,\C) \times \Aut_-(F_2):~
\text{\eqref{eq:characteristic representation} holds with $\Psi=\bPsi_M$}\big\}.
\end{array}
\end{equation}
Observe that $\Delta = \Delta_+ \sqcup \Delta_-$ is a group, though the group operation needs adjustment. If $(M',\psi') \in \Delta_s$ with $s \in \{+,-\}$ we define
$$
(M,\psi) \cdot (M',\psi')=
\begin{cases}
(M M', \psi \circ \psi') \in \Delta_{s} & \text{if $(M,\psi) \in \Delta_+$,} \\
(M \overline{M'}, \psi \circ \psi') \in \Delta_{-s} & \text{if $(M,\psi) \in \Delta_-$.}
\end{cases}
$$
(This choice is made to be compatible with composition of inner automorphisms and conjugate inner automorphisms.)
%If statement \eqref{eq:characteristic representation} holds for some collection of pairs in $\Delta_+ \sqcup \Delta_-$ then it also holds for the generated subgroup. 
We must prove that the projection of $\Delta$ to $\Aut(F_2)$ is surjective.

First consider the inner automorphisms of $F_2$, which have the form $\psi_h(g)=hgh^{-1}$ for some $h \in F_2$. By manipulating \eqref{eq:characteristic representation} it can be observed that $\big(\rho(h), \psi_h\big) \in \Delta_+$ for all $h$. 

Now consider $\psi_1$ and $\psi_2$. Observe that \eqref{eq:characteristic representation} holds for all $g \in F_2$ if and only if it holds for a set of generators of $F_2$. As indicated above this proof,
by manipulating \eqref{eq:characteristic representation} in each case, it follows that $(M_1,\psi_1), (M_2,\psi_2) \in \Delta_+$. The elements $\psi_1$ and $\psi_2$ together with the inner automorphisms generate $\Aut_+(F_2)$, so $\Aut_+(F_2)$ is in the image of the projection of $\Delta_+$. 

Similarly consider $\psi_-$. Again by considering \eqref{eq:characteristic representation} in this case,
we see that $(M_-,\psi_-) \in \Delta_-$. The collection $\{\psi_-\} \sqcup \Aut_+(F_2)$ generates $\Aut(F_2)$, so it must be that $\Aut(F_2)$ is in the image of the projection of $\Delta$ as desired.
\end{proof}

\begin{rem}[Orientation reversing elements]
For the main goals of the paper, it would suffice to work with $\Aut_+(F_2)$ rather than all of $\Aut(F_2)$, since $\Aut_+(F_2)$ already acts transitively on primitive elements of $F_2$, and one could define a notion of oriented characteristic representation omitting (--) from the definition. However, all the representations we found have this extra symmetry, and our algorithm for ``improvement'' of representations described in \S \ref{sect:improving} respects this additional symmetry. So, we have opted to consider $\Aut_-(F_2)$ throughout this section for aesthetic reasons at the cost of some minor increase in the complexity of some of our arguments.
\end{rem}

\subsection{Some characteristic representations with finite image}
\label{sect:finite image}
We will now give some finite oriented characteristic representations.

We define $\rho_2:F_2 \to \GL(3,\C)$ by 
\begin{equation}
\label{eq:rho_2}
\rho_2(a)=\text{diag}(-1,-1,1) \quad \text{and} \quad
\rho_2(b)=\text{diag}(1,-1,-1).
\end{equation}
\begin{prop}
\label{prop:rho 2}
The image $\rho_2(F_2)$ is isomorphic to the Klein four group, $C_2 \times C_2$.
The representation $\rho_2$ is oriented characteristic.
\end{prop}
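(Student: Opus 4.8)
The proposition has two parts, and I would treat them in order. For the first, note that $\rho_2(a)$ and $\rho_2(b)$ are commuting diagonal matrices with $\rho_2(a)^2=\rho_2(b)^2=I$, so $\rho_2(F_2)$ is an abelian group generated by two involutions and is therefore a quotient of $C_2\times C_2$. To see that the quotient is trivial, simply observe that $\rho_2(a)$, $\rho_2(b)$, and $\rho_2(ab)=\diag(-1,1,-1)$ are all distinct from the identity, so all three non-trivial elements of $C_2\times C_2$ occur in the image; hence $\rho_2(F_2)\cong C_2\times C_2$.

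For the second part I would invoke Proposition \ref{prop:characteristic criterion}, which reduces the task to exhibiting invertible matrices $M_1,M_2,M_-$ satisfying the three listed intertwining relations for the generators $\psi_1,\psi_2,\psi_-$ of \eqref{eq:generators for Out}. The simplification that makes this painless is that $\rho_2$ takes values in real diagonal matrices of order two, so $\overline{\rho_2(x)}=\rho_2(x)=\rho_2(x)^{-1}$ for $x\in\{a,b\}$; in particular $\rho_2\circ\psi_-=\rho_2$ outright (because $a\mapsto a^{-1}$ has no effect), so condition (--) holds with $M_-=I$. The structural reason the other two matrices exist is that $\rho_2$ factors through $\mathrm{ab}\colon F_2\to\Z^2$ and then through $(\Z/2)^2$, and the three coordinates of $\C^3$ correspond to the three non-trivial characters of $(\Z/2)^2$; since $\GL(2,\Z)$ acts on $(\Z/2)^2\cong\F_2^2$ through the full group $\GL(2,\F_2)\cong S_3$, each automorphism of $F_2$ permutes these characters, and the permutation is realized by conjugating $\rho_2$ by the corresponding permutation matrix.

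Concretely, I would take $M_1$ to be the permutation matrix of the transposition $(1\,3)$ and $M_2$ the permutation matrix of $(1\,2)$, then verify conditions (1) and (2) by direct computation, using that conjugation by a permutation matrix simply permutes the diagonal entries of $\rho_2(a)$ and $\rho_2(b)$: one checks that $M_1$ swaps $\rho_2(a)\leftrightarrow\rho_2(b)$ (and fixes $\rho_2(ab)$), while $M_2$ commutes with $\rho_2(a)$ and sends $\rho_2(b)$ to $\rho_2(ab)$. These are exactly the relations demanded in (1) and (2) once one unwinds that the criterion involves $\psi^{-1}$ rather than $\psi$.

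I do not expect any genuine obstacle; the only points requiring care are bookkeeping ones: getting the direction of the intertwining relations correct (they are stated in terms of $\psi^{-1}$), and identifying which permutation is needed for each of $\psi_1$ and $\psi_2$. The latter is pinned down unambiguously by matching up the $\pm1$-eigenspaces of $\rho_2(a)$, $\rho_2(b)$, and $\rho_2(ab)$ under the required conjugation.
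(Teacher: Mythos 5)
Your proposal is correct and follows essentially the same route as the paper: the paper also verifies the image directly and then applies Proposition \ref{prop:characteristic criterion} with $M_-=I$ and with $M_1$, $M_2$ exactly the permutation matrices for the transpositions $(1\,3)$ and $(1\,2)$ that you chose. Your character-theoretic explanation of why such permutation matrices must exist is a pleasant addition, but the verification is the same elementary computation.
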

\begin{proof}
The image $\rho(F_2)$ can easily be seen to consist of four elements:
$\rho_2(a)$, $\rho_2(b)$, the identity and $\rho(ab)=\text{diag}(-1,1,-1)$. By inspection the image
is isomorphic to the Klein four group.
By an elementary calculation it can be observed that the statements of Proposition \ref{prop:characteristic criterion} are satisfied when the choices of 
$$M_1=\left(\begin{array}{rrr}
0 & 0 & 1 \\
0 & 1 & 0 \\
1 & 0 & 0
\end{array}\right)
\quad \text{and} \quad
M_2=\left(\begin{array}{rrr}
0 & 1 & 0 \\
1 & 0 & 0 \\
0 & 0 & 1
\end{array}\right)$$
and $M_-=I$ are made.
\end{proof}

For odd numbers $k \geq 3$ define $\rho_k:F_2 \to \GL(k,\C)$ by 
\begin{equation}
\label{eq:rho_k}
\rho_k(a)=\text{diag}(1, \omega, \omega^2, \ldots, \omega^{k-1})
\quad \text{and} \quad
\rho_k(b)=\left(\begin{array}{rrrrrr}
0 & 1 & 0 & 0 & 0 & \ldots \\
0 & 0 & 1 & 0 & 0 & \ldots \\
0 & 0 & 0 & 1  & 0 & \ldots \\
\vdots & \vdots & \vdots & & \ddots  \\
0 & 0 & 0 & 0 & & 1 \\
1 & 0 & 0 & 0 & \dots & 0
\end{array}\right),
\end{equation}
where $\omega=e^{\frac{2 \pi i}{k}}.$ Here $\rho_k(b)$ is a permutation matrix of order $k$.

\begin{prop}
\label{prop:rho odd}
The image $\rho_k(F_2)$ is isomorphic to the Heisenberg group $H(\Z/k\Z)$.
The representation is oriented characteristic: It satisfies the hypotheses of Proposition \ref{prop:characteristic criterion} with the matrix $M_1$ given by
$$(M_{1})_{i,j}=\omega^{(i-1)(j-1)} \quad \text{for $i,j \in \{1,\ldots,k\}$},$$
with $M_2$ given by the diagonal matrix with entries $(M_{2})_{i,i}=\omega^{\frac{-(i-1)(i-2)}{2}}$
and with $M_-=I$.
\end{prop}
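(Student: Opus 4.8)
The plan is to prove the two assertions separately: first identify the image $\rho_k(F_2)$, then verify the criterion of Proposition \ref{prop:characteristic criterion} for the three matrices in the statement.

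\emph{The image.} Write $A=\rho_k(a)=\diag(1,\omega,\dots,\omega^{k-1})$, so that $A_{ii}=\omega^{i-1}$, and $P=\rho_k(b)$, the $k$-cycle permutation matrix, which has order $k$. A one-line computation of the conjugate of $A$ by $P$ gives $P^{-1}AP=\omega^{-1}A$, so the commutator $[A,P]=A^{-1}P^{-1}AP=\omega^{-1}I$ is a scalar matrix of order exactly $k$. In particular $[A,P]$ is central, so the relations $a^k$, $b^k$, $[a,b]^k$, $[a,[a,b]]$, $[b,[a,b]]$ all hold in $\rho_k(F_2)$; these are the relations presenting $H(\Z/k\Z)$ (the presentation recorded above in the case $k=3$), so there is a surjection $H(\Z/k\Z)\twoheadrightarrow\rho_k(F_2)$. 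To see it is an isomorphism I would count orders. Every element of $\rho_k(F_2)$ has the form $A^mP^n(\omega I)^\ell$ since $\omega I$ is central, and if such an element lies in $\langle\omega I\rangle$ then it is diagonal; a power of a $k$-cycle is diagonal only if it is the identity, so $k\mid n$, and then $A^m$ scalar forces $k\mid m$. Hence $\rho_k(F_2)/\langle\omega I\rangle\cong(\Z/k\Z)^2$ and $\abs{\rho_k(F_2)}=k^3=\abs{H(\Z/k\Z)}$, so the surjection is an isomorphism.

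\emph{The criterion.} For each of the six identities in Proposition \ref{prop:characteristic criterion} I would compare matrix entries; in every case the identity collapses to an elementary relation among powers of $\omega$. For (1), one computes $(M_1A)_{ij}=\omega^{i(j-1)}=(PM_1)_{ij}$ and $(AM_1P)_{ij}=\omega^{(i-1)(j-1)}=(M_1)_{ij}$; conceptually $M_1$ is the discrete Fourier matrix, which intertwines multiplication by $\omega$ with the cyclic shift. For (2), $M_2$ and $A$ are both diagonal, hence commute, and $M_2P=\rho(ab)M_2=APM_2$ reduces, at non-wrap-around entries, to the integer identity $(i-1)-\tfrac{i(i-1)}{2}=-\tfrac{(i-1)(i-2)}{2}$; at the single entry where the index wraps one needs $\tfrac{(k-1)(k-2)}{2}\equiv 1\pmod k$, which holds precisely because $k$ is odd (and fails for even $k$, so the chosen $M_2$ genuinely uses oddness). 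For (--), $M_-=I$ works because $\overline{A}=A^{-1}$ (its diagonal entries are roots of unity) and $\overline{P}=P$ ($P$ has real entries).

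The only step that is not bookkeeping is the order count giving injectivity of $H(\Z/k\Z)\to\rho_k(F_2)$, and that is short; everything else is a mechanical verification that I would present compactly, so I expect no real obstacle. I would remark in passing that the role of oddness in the $M_2$ computation is the familiar fact that the Weil representation of $\SL(2,\Z/k\Z)$ on functions on $\Z/k\Z$ is an honest linear representation, rather than merely projective, exactly when $k$ is odd.
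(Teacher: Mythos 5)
Your proposal is correct and follows essentially the same route as the paper: you verify the Heisenberg relations via $\rho_k([a,b])=\omega^{-1}I$ to get a surjection $H(\Z/k\Z)\twoheadrightarrow\rho_k(F_2)$, and then check the three identities of Proposition \ref{prop:characteristic criterion} by direct entrywise computation (which the paper leaves to the reader), including the correct observation that the wrap-around congruence $\tfrac{(k-1)(k-2)}{2}\equiv 1 \pmod k$ is exactly where oddness of $k$ enters. The only (harmless) divergence is the injectivity step, where you count $|\rho_k(F_2)|=k^3$ instead of the paper's remark that the map restricts to an isomorphism on the center of $H(\Z/k\Z)$; both are valid one-line arguments.
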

\begin{proof}
To see the image is the Heisenberg group, recall that
$$H(\Z/k\Z)=\left\langle a, b~\Big|~ a^k, b^k, [a,b]^k, \big[a,[a,b]\big], \big[b,[a,b]\big]\right\rangle.$$
First we will check that $\rho_k$ factors through $H(\Z/k\Z)$.
It should be clear that $a^k$ and $b^k$ lie in $\ker \rho_k$. By computation we see $\rho_k([a,b])=\omega^{-1} I$. Thus $[a,b]$ is central in the image and $[a,b]^k \in \ker \rho_k$. This shows that the image $\rho_k(F_2)$ is isomorphic to a quotient of $H(\Z/k\Z)$. 
The image must be isomorphic to $H(\Z/k\Z)$ because the homomorphism restricts to an isomorphism of the center of $H(\Z/k\Z)$.

The statements of Proposition \ref{prop:characteristic criterion} for the matrices $M_1$, $M_2$ and $M_-$ listed can be verified by a direct computation. (Calculation carried out by hand, and checked for various values of $k$ with SageMath \cite{sagemath}.)
%Consider (1). Observe the entries of $\rho_k(a) M_1$ and $\rho_k(a) M_1 \rho_k(b)$ are given by 
%$$\big(\rho_k(a) M_1\big)_{i,j}=\omega^{(i-1)j}
%\quad \text{and} \quad 
%\big(\rho_k(a) M_1\rho_k(b)\big)_{i,j}=\omega^{(i-1)(j-1)}.$$
%The entries of $M_1 \rho(a)$ and $\rho(b) M_1$ can both be computed to be
%$$\big(M_1 \rho_k(a)\big)_{i,j}=\omega^{i(j-1)}=\big(\rho_k(b)M_1\big)_{i,j}.$$
%The matrices $M_2$ and $\rho(a)$ are both diagonal and so they commute. The non-zero entries of $\rho(ab)$ are given below with indices written modulo $k$ as are the non-zero entries of $M_2 \rho_k(b)$ and $\rho_k(ab)M_2$:
%$$\rho_k(ab)_{i,i+1}=\omega^{i-1} \quad \text{and} \quad
%\big(M_2 \rho_k(b)\big)_{i,i+1}=\omega^{\frac{-(i-1)(i-2)}{2}}=\big(\rho_k(ab)M_2\big)_{i,i+1}.$$
%The computation checking the statement (--) should be clear since $M_-=I$. 
\end{proof}

Observe that the images of $\rho_k$ are matrices with entries in $\Z[\omega]$. 
Later we will need the following observation:

\begin{prop}
\label{prop:matrix group}
Fix an odd $k \geq 3$. Let $M_{k,k}$ denote the additive group of $k \times k$ matrices with entries in $\Z[\omega]$. The subgroup of $M_{k,k}$ generated by $\{\rho_k(g):~g \in F_2\}$ has finite index.
\end{prop}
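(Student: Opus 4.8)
The plan is to show that the additive subgroup $L \subseteq M_{k,k}$ generated by $\{\rho_k(g) : g \in F_2\}$ is a full-rank sublattice of $M_{k,k}$, since finite index for a subgroup of a finitely generated free abelian group is equivalent to full rank (or equivalently to the two lattices spanning the same $\Q$-vector space after tensoring with $\Q$). Concretely, $M_{k,k}$ is free abelian of rank $k^2 \cdot [\Q(\omega):\Q] = k^2 \varphi(k)$, so it suffices to exhibit $k^2\varphi(k)$ elements of $L$ that are linearly independent over $\Q$, or — more cleanly — to show that the $\Q$-span of $\rho_k(F_2)$ inside $M_{k,k} \otimes \Q = \Mat_{k\times k}(\Q(\omega))$ is all of $\Mat_{k\times k}(\Q(\omega))$.

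First I would describe $\rho_k(F_2)$ explicitly. Since $\rho_k(a)=\diag(1,\omega,\dots,\omega^{k-1})$ is diagonal and $\rho_k(b)$ is the cyclic permutation matrix $C$ sending $e_i \mapsto e_{i-1}$, a general group element $\rho_k(a^{m_1}b^{n_1}\cdots)$ can be normalized: using $[a,b]$ central with $\rho_k([a,b])=\omega^{-1}I$, every element of the image has the form $\omega^t \rho_k(a)^m \rho_k(b)^n = \omega^t D^m C^n$ for $t,m,n \in \Z/k\Z$, where $D=\diag(1,\omega,\dots,\omega^{k-1})$. (This is exactly the Heisenberg description already established in Proposition \ref{prop:rho odd}.) So $L$ is the additive group generated by $\{\omega^t D^m C^n : t,m,n \in \Z/k\Z\}$, equivalently by $\{D^m C^n : 0 \le m,n < k\}$ together with their $\Z[\omega]$-multiples — note $L$ is already a $\Z[\omega]$-module since $\omega^t D^m C^n \in L$, so $L$ is the $\Z[\omega]$-span of $\{D^m C^n\}$.

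The key step is then linear independence: I claim $\{D^m C^n : 0 \le m,n < k\}$ are linearly independent over $\Q(\omega)$ (in fact over $\C$) in $\Mat_{k\times k}(\C)$. This is the standard fact that the matrices $D^m C^n$ form a basis of $\Mat_{k\times k}(\C)$ — they are (up to scalars) the generalized Pauli / clock-and-shift matrices. One sees this directly: the $(i,j)$ entry of $D^m C^n$ is $\omega^{(i-1)m}$ if $j \equiv i - n \pmod k$ and $0$ otherwise, so for fixed $n$ the matrices $\{D^m C^n : 0\le m<k\}$ are supported on the single "diagonal" $\{j \equiv i-n\}$ and on that diagonal their entries form the rows of a Vandermonde matrix in the distinct values $1,\omega,\dots,\omega^{k-1}$, hence are linearly independent; different values of $n$ give disjoint supports, so the whole family of $k^2$ matrices is independent. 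Therefore the $\Q(\omega)$-span of $\rho_k(F_2)$ is all of $\Mat_{k\times k}(\Q(\omega)) = M_{k,k}\otimes\Q$, so $L$ has full rank in $M_{k,k}$ and hence finite index.

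I expect the only mild obstacle to be bookkeeping: confirming that the image really is $\{\omega^t D^m C^n\}$ with no further relations needed (this follows from $\rho_k([a,b]) = \omega^{-1}I$ and centrality, already in hand) and getting the Vandermonde/support argument stated precisely with the indexing conventions of \eqref{eq:rho_k}. Neither step presents a genuine difficulty; the content is entirely the clock-and-shift basis observation, and the conversion "full rank $\Leftrightarrow$ finite index" for subgroups of $\Z^N$.
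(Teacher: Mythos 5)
Your proof is correct, but it follows a different route from the paper. The paper argues constructively: it observes that $\sum_{\ell=0}^{k-1}\rho_k(a^\ell)=kE_{1,1}$, then translates by powers of $\rho_k(b)$ and scales by $\rho_k([b,a])=\omega I$ to conclude that $k\,\omega^n E_{i,j}$ lies in the generated subgroup for all $i,j,n$; hence the subgroup contains $k\,M_{k,k}$, which visibly has finite index (and this gives an explicit bound on the index). You instead put the image in the normal form $\omega^t D^m C^n$ (clock-and-shift matrices), prove that the $k^2$ matrices $D^mC^n$ are linearly independent over $\Q(\omega)$ via the disjoint-support-plus-Vandermonde argument, and invoke the equivalence of full rank and finite index for subgroups of a finitely generated free abelian group. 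Both arguments ultimately rest on the same orthogonality of the characters $1,\omega,\dots,\omega^{k-1}$, but yours is a rank/dimension count while the paper's is an explicit lattice inclusion; the paper's version buys an explicit finite-index sublattice $kM_{k,k}$, while yours buys the stronger structural statement that $\rho_k(F_2)$ spans the full matrix algebra over $\Q(\omega)$, which in particular re-proves the irreducibility statement of Corollary \ref{cor:irreducible} directly. Your reduction to the normal form is legitimately sourced (centrality and scalarity of $\rho_k([a,b])$ are established in Proposition \ref{prop:rho odd}), and the observation that the additive span is a $\Z[\omega]$-module because multiplication by $\omega$ permutes the generators is sound, so there is no gap.
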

\begin{proof}
Let $E_{i,j}$ denote the matrix with a $1$ in the entry in row $i$ and column $j$ but with all other entries equal to zero. It suffices to show that $k \omega^n E_{i,j}$ is in the generated subgroup for all $i,j \in \{1, \ldots, k\}$ and all $n \in \{0, \ldots,k-1\}$. By direct computation we observe
$$k E_{1,1}=\sum_{\ell=0}^{k-1} \rho_k(a^\ell).$$
Utilizing the action of $\rho_k(b)$ as a permutation matrix we can then see
$$E_{i,j}=\rho_k(b^{1-i}) \cdot E_{1,1} \cdot \rho_k(b^{j-1}).$$
Thus, $k E_{i,j}$ is in this generated subgroup as well. Finally to get the powers of $\omega$ observe that $\rho_k([b,a])=\omega I$.
\end{proof}

\begin{cor}
\label{cor:irreducible}
For odd $k \geq 3$, the representation $\rho_k$ is irreducible.
\end{cor}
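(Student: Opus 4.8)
The plan is to deduce irreducibility from Proposition \ref{prop:matrix group} via the elementary direction of Burnside's theorem: a set of operators on a finite-dimensional complex vector space whose complex linear span is the \emph{entire} endomorphism algebra must act irreducibly.

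First I would show that the complex linear span of $\{\rho_k(g):g\in F_2\}$ inside $\Mat_k(\C)$ is everything. Let $S\subseteq M_{k,k}$ be the additive subgroup generated by $\{\rho_k(g):g\in F_2\}$. By construction every element of $S$ is an integer combination of the matrices $\rho_k(g)$, so $S$ is contained in the $\C$-span of $\{\rho_k(g):g\in F_2\}$. Proposition \ref{prop:matrix group} says that $S$ has finite index $m=[M_{k,k}:S]$ in $M_{k,k}$, hence $m\cdot M_{k,k}\subseteq S$. In particular $m\,E_{i,j}\in S$ for every pair $i,j\in\{1,\dots,k\}$, where $E_{i,j}$ is the matrix unit. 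Dividing by $m$, each $E_{i,j}$ lies in $\mathrm{span}_\C\{\rho_k(g):g\in F_2\}$, and since the $E_{i,j}$ form a basis of $\Mat_k(\C)$ we get $\mathrm{span}_\C\{\rho_k(g):g\in F_2\}=\Mat_k(\C)$. (Alternatively, one may skip the index bound and directly invoke the matrices $k\,E_{i,j}$ exhibited in the proof of Proposition \ref{prop:matrix group}.)

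Next I would finish with the standard linear-algebra step. Suppose $W\subseteq\C^k$ is a subspace with $\rho_k(g)W\subseteq W$ for all $g\in F_2$. Then $W$ is invariant under every element of the linear span of $\{\rho_k(g):g\in F_2\}$, which by the previous paragraph is all of $\Mat_k(\C)$. But the only subspaces of $\C^k$ invariant under the full matrix algebra are $\{0\}$ and $\C^k$: given a nonzero vector $v\in W$ with a nonzero $j$th coordinate, the matrices $E_{i,j}$ send $v$ into $W$ and produce every standard basis vector $e_i$. Hence $W\in\{\{0\},\C^k\}$, so $\rho_k$ is irreducible.

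I do not expect any real obstacle here; the substantive work has already been packaged into Proposition \ref{prop:matrix group}, and what remains is a two-line argument. The only point requiring a little care is the bookkeeping that the additive subgroup $S$ generated by the $\rho_k(g)$ genuinely sits inside their complex span (it does, since forming a subgroup of an additive abelian group only involves integer combinations), together with the passage from finite index to $m\,M_{k,k}\subseteq S$.
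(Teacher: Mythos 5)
Your proposal is correct and follows essentially the same route as the paper: both deduce irreducibility from Proposition \ref{prop:matrix group}, since an invariant subspace would have to be preserved by the (finite-index, hence spanning) additive group generated by the image, in particular by the matrix units up to a nonzero scalar. Your write-up merely makes explicit the passage through the full matrix algebra that the paper leaves implicit.
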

\begin{proof}
Any subspace of $\C^k$ which is invariant under $\rho_k$ must be mapped into itself
by all elements of  the subgroup of $M_{k,k}$ generated by $\{\rho_k(g):~g \in F_2\}$.
The previous proposition implies that there is no such non-zero proper subspace.
\end{proof}

\subsection{Improving characteristic representations}
\label{sect:improving}
We will now  explain a process which can take an oriented characteristic representation $\rho:F_2 \to \GL(n,\C)$
and produce a new oriented characteristic representation $\tilde \rho:F_2 \to \GL(\tilde n,\C)$ where $\tilde n \geq n$ and hopefully the $\ker \tilde \rho$ is strictly smaller than $\ker \rho$. 

Fix $\rho$ for this subsection.
We will consider deformations of $\rho$ into the affine group $\Aff(n)=\C^n \rtimes \GL(n,\C)$ where the product in $\Aff(n)$ is given by
\begin{equation}
\label{eq:product rule}
(\mathbf{v}, M) \cdot (\mathbf{w},N)=(\mathbf{v}+M\mathbf{w},MN).
\end{equation}
The group $\GL(n,\C)$ is isomorphic to a subgroup $\Aff(n)$ via the map $M \mapsto (\0, M)$, and this explains how to multiply elements of $\GL(n,\C)$ and $\Aff(n)$.
Let $\pi_1: \Aff(n) \to \C^n$ and $\pi_2: \Aff(n) \to \GL(n,\C)$ be the natural projections (noting that $\pi_1$ is not a homomorphism).
We will say that an {\em affable} representation 
$\hat \rho: F_2 \to \Aff(n)$ is a homomorphism for which $\pi_2 \circ \hat \rho=\rho.$
We use $\CA$ to denote the collection of all affable representations. Observe:

\begin{prop}
\label{prop:linear}
The collection $\CA$ is a vector space over $\C$ when endowed with the operations of addition and scalar multiplication defined by
$$(\hat \rho_1+\hat \rho_2)(g)=\big(\pi_1 \circ \hat \rho_1(g)+\pi_1 \circ \hat \rho_2(g),\rho(g)\big) \quad \text{and} $$
$$(\lambda \hat \rho_1)(g)=\big(\lambda \pi_1 \circ \hat \rho_1(g), \rho(g)\big)$$
for all $\hat \rho_1, \hat \rho_2 \in \CA$, all $\lambda \in \C$ and all $g \in F_2$. In particular,
for any $g$ the map $\eval_g:\CA \to \C^n$ defined by $\eval_g(\hat \rho)=\pi_1 \circ \hat \rho(g)$ is linear.
\end{prop}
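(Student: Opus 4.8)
The plan is to identify $\CA$ with a space of $\C^n$-valued functions on $F_2$ cut out by a linear condition, for which being a vector space is automatic. Given $\hat\rho \in \CA$, write $\hat\rho(g) = \bigl(\beta(g), \rho(g)\bigr)$, so that $\beta = \pi_1 \circ \hat\rho : F_2 \to \C^n$ and $\beta(g) = \eval_g(\hat\rho)$. Using the product rule \eqref{eq:product rule}, the product $\hat\rho(g)\hat\rho(h)$ equals $\bigl(\beta(g) + \rho(g)\beta(h),\ \rho(g)\rho(h)\bigr)$, while $\hat\rho(gh) = \bigl(\beta(gh), \rho(gh)\bigr)$. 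Since $\rho$ is already a homomorphism, $\hat\rho$ is a homomorphism if and only if
\[
\beta(gh) = \beta(g) + \rho(g)\,\beta(h) \qquad \text{for all } g,h \in F_2.
\]
First I would record this reduction: $\hat\rho \mapsto \beta$ is a bijection from $\CA$ onto the set $\mathcal Z$ of functions $\beta : F_2 \to \C^n$ satisfying the displayed identity (a $1$-cocycle condition for the $F_2$-module $\C^n$ with action through $\rho$), and under this bijection the two operations defined in the statement become ordinary pointwise addition and scalar multiplication of functions $F_2 \to \C^n$.

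Second, I would observe that $\mathcal Z$ is a linear subspace of the vector space $(\C^n)^{F_2}$ of all functions $F_2 \to \C^n$ with pointwise operations. This is immediate because the cocycle identity is linear in $\beta$: both sides are additive and homogeneous in $\beta$, so if $\beta_1, \beta_2 \in \mathcal Z$ and $\lambda \in \C$ then $\beta_1 + \beta_2 \in \mathcal Z$ and $\lambda \beta_1 \in \mathcal Z$; and the zero function lies in $\mathcal Z$ (it corresponds to the affable representation $g \mapsto (\mathbf{0}, \rho(g))$, i.e.\ the composition of $\rho$ with the standard inclusion $\GL(n,\C) \hookrightarrow \Aff(n)$). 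Transporting this structure back through the bijection shows that $\CA$ inherits all the vector space axioms; in particular this verifies that $\hat\rho_1 + \hat\rho_2$ and $\lambda \hat\rho_1$ are again affable representations, which is the only point in the statement that is not purely formal.

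Finally, linearity of $\eval_g$ is read off directly: $\eval_g(\hat\rho) = \beta(g)$, and since the operations on $\CA$ correspond to pointwise operations on the associated functions, $\eval_g(\hat\rho_1 + \hat\rho_2) = \beta_1(g) + \beta_2(g) = \eval_g(\hat\rho_1) + \eval_g(\hat\rho_2)$ and $\eval_g(\lambda\hat\rho_1) = \lambda\beta_1(g) = \lambda\,\eval_g(\hat\rho_1)$. I do not expect a substantive obstacle here; the only step requiring an actual computation is the equivalence between the homomorphism property and the cocycle identity via \eqref{eq:product rule}, and everything else is the standard fact that a linear subspace of a function space, together with its pointwise operations, is a vector space.
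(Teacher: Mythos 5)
Your proposal is correct, and it is essentially the paper's approach carried out in full: the paper's own ``proof'' merely notes that the operations are clearly linear and leaves to the reader the one substantive point, namely that $\hat\rho_1+\hat\rho_2$ and $\lambda\hat\rho_1$ are again homomorphisms, which is exactly what you verify by reducing the homomorphism property to the cocycle identity $\beta(gh)=\beta(g)+\rho(g)\beta(h)$ and observing that this condition is linear in $\beta$. Your identification of $\CA$ with the space of $1$-cocycles for the $F_2$-action on $\C^n$ via $\rho$ is a clean way to package that check and also makes the linearity of $\eval_g$ immediate.
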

\begin{proof}[Discussion of proof]
The operations are clearly linear in nature, but it must be checked that $\hat \rho_1+\hat \rho_2$ and $\lambda \hat \rho_1$ define group homomorphisms (assuming $\hat \rho_1$ and $\hat \rho_2$ are group homomorphisms). We leave this elementary check to the reader.
\end{proof}

\begin{prop}
\label{prop:coordinates}
Recall $a$ and $b$ denote the generators of $F_2$. The map $\eval_a \times \eval_b:\CA \to \C^n \times \C^n$ is a vector space isomorphism.
\end{prop}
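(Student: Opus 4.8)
The plan is to exploit the fact that $F_2$ is free on $\{a,b\}$, so that a homomorphism out of $F_2$ is completely determined by, and may be freely prescribed on, the generators $a$ and $b$. Since Proposition \ref{prop:linear} already records that $\eval_a \times \eval_b$ is linear (each $\eval_g$ being linear), it remains only to check that this map is a bijection, and I would do this by constructing its inverse directly.

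For surjectivity, given a target pair $(\mathbf{v},\mathbf{w}) \in \C^n \times \C^n$, I would define $\hat\rho$ on generators by $\hat\rho(a) = (\mathbf{v},\rho(a))$ and $\hat\rho(b) = (\mathbf{w},\rho(b))$ and invoke the universal property of the free group to extend this to a homomorphism $\hat\rho: F_2 \to \Aff(n)$. The one point that actually needs an argument is that $\hat\rho$ is \emph{affable}, i.e. that $\pi_2 \circ \hat\rho = \rho$. This holds because $\pi_2$ (unlike $\pi_1$) is a group homomorphism, so $\pi_2 \circ \hat\rho$ is a homomorphism $F_2 \to \GL(n,\C)$ agreeing with $\rho$ on $a$ and $b$; by the uniqueness clause of the universal property, $\pi_2 \circ \hat\rho = \rho$ on all of $F_2$. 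Then $\hat\rho \in \CA$ with $\eval_a(\hat\rho) = \mathbf{v}$ and $\eval_b(\hat\rho) = \mathbf{w}$.

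For injectivity I would show that the kernel of the linear map $\eval_a \times \eval_b$ is trivial. The zero element of $\CA$ is the affable representation $\hat\rho_0 \colon g \mapsto (\mathbf{0},\rho(g))$, which is indeed a homomorphism by a one-line check against the product rule \eqref{eq:product rule}. If $\hat\rho \in \CA$ satisfies $\eval_a(\hat\rho) = \eval_b(\hat\rho) = \mathbf{0}$, then $\hat\rho$ and $\hat\rho_0$ are two homomorphisms $F_2 \to \Aff(n)$ agreeing on the free generating set $\{a,b\}$, hence equal. Combined with surjectivity, this shows $\eval_a \times \eval_b$ is a linear bijection and therefore a vector space isomorphism.

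I do not expect a serious obstacle: the content is just that an affable representation is freely and uniquely determined by the translation parts it assigns to $a$ and $b$. The only place a careless argument could slip is in the surjectivity step, where one must verify that the homomorphism produced by the universal property genuinely lies in $\CA$ — and that is exactly where one uses that $\pi_2$, unlike $\pi_1$, is a group homomorphism.
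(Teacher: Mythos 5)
Your proposal is correct and follows essentially the same route as the paper: the paper also proves the map is an isomorphism by exhibiting the inverse that sends $(\va,\vb)$ to the homomorphism determined on the free generators by \eqref{eq:a,b}. Your write-up simply makes explicit the routine checks (affability via $\pi_2$ being a homomorphism, uniqueness from the universal property) that the paper leaves to the reader.
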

\begin{proof}
It should be clear that this defines a homomorphism between vector spaces by definition of the operations
in Proposition \ref{prop:linear}. It is an isomorphism because the images of the generators determine the homomorphism; the inverse map sends $(\va, \vb)$ to the homomorphism determined by the following images of the generators of $F_2$:
\begin{equation}
\label{eq:a,b}
a\mapsto \big(\va, \rho(a)\big)
\quad \text{and} \quad
b \mapsto \big(\vb, \rho(b)\big).
\end{equation}
\end{proof}

Let $\Conj:\C^n \times \CA \to \CA$ be the action defined by post-conjugation by $\C^n \subset \Aff(n)$:
\begin{equation}
\label{eq:conj}
\Conj_\vv(\hat \rho)(g)=(\vv,I) \cdot \hat \rho(g) \cdot (-\vv,I) \quad \text{for all $g \in F_2$},
\end{equation}
where $I$ denotes the identity element of $\GL(n,\C)$. When $\CA$ is viewed as isomorphic to $\C^{2n}$,
we see that each $\Conj_\vv$ acts by translation on $\CA$ (i.e., $\Conj_\vv(\hat \rho)-\hat \rho$ does not depend on $\hat \rho$):
\begin{prop}
\label{prop:conjugation}
For each $\vv \in \C^n$, each $\hat \rho \in \CA$ and each $g \in F_2$ we have
\begin{equation}
\label{eq:translation vector}
\big(\Conj_\vv(\hat \rho)-\hat \rho\big)(g)= \Big(\big(I-\rho(g)\big)\vv, \rho(g)\Big).
\end{equation}
\end{prop}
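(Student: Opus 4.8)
The plan is to verify \eqref{eq:translation vector} by a direct computation inside $\Aff(n)$ using the product rule \eqref{eq:product rule}, and then to reinterpret the outcome in the vector-space language of Proposition \ref{prop:linear}. Fix $\vv \in \C^n$, $\hat\rho \in \CA$ and $g \in F_2$, and write $\hat\rho(g) = (\vw, M)$, where $M = \rho(g)$ and $\vw = \pi_1 \circ \hat\rho(g) = \eval_g(\hat\rho)$. Applying \eqref{eq:product rule} twice — first to $(\vv, I)\cdot(\vw, M)$ and then to the result times $(-\vv, I)$, as dictated by the definition \eqref{eq:conj} of $\Conj_\vv$ — I expect to obtain
$$\Conj_\vv(\hat\rho)(g) = \big((I - M)\vv + \vw,\; M\big).$$

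Before subtracting, I would note that this computation also shows $\pi_2\circ\Conj_\vv(\hat\rho) = \rho$; together with the fact that $\Conj_\vv(\hat\rho)$ is a homomorphism (being the conjugate of the homomorphism $\hat\rho$ by the fixed element $(\vv,I) \in \Aff(n)$), this means $\Conj_\vv(\hat\rho) \in \CA$, so the difference $\Conj_\vv(\hat\rho) - \hat\rho$ is a legitimate element of $\CA$. Then, using the definition of subtraction from Proposition \ref{prop:linear} — the first coordinate of a difference is the difference of first coordinates, and the second coordinate is $\rho(g)$ — I would subtract $(\vw, M)$ from the displayed expression to get
$$\big(\Conj_\vv(\hat\rho) - \hat\rho\big)(g) = \big((I - M)\vv + \vw - \vw,\; M\big) = \Big(\big(I - \rho(g)\big)\vv,\; \rho(g)\Big),$$
which is exactly \eqref{eq:translation vector}.

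There is no genuine obstacle here: the proposition is a routine expansion of the group law, and the only point meriting a line of care is checking that $\Conj_\vv(\hat\rho)$ really lies in $\CA$ so that the subtraction makes sense — but this is immediate from the second coordinate of the computation above. As a concluding remark one can record that, under the isomorphism $\eval_a \times \eval_b : \CA \xrightarrow{\sim} \C^n \times \C^n$ of Proposition \ref{prop:coordinates}, the map $\Conj_\vv$ acts by translation by the vector $\big((I - \rho(a))\vv,\ (I - \rho(b))\vv\big)$, which is what makes this construction useful for deforming $\rho$ in the next step.
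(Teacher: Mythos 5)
Your proposal is correct and matches the paper's own argument: both are the same one-line computation $(\vv,I)\cdot\big(\pi_1\circ\hat\rho(g),\rho(g)\big)\cdot(-\vv,I)=\big(\vv+\pi_1\circ\hat\rho(g)-\rho(g)\vv,\rho(g)\big)$ via the product rule \eqref{eq:product rule}, followed by subtraction in the sense of Proposition \ref{prop:linear}. Your extra checks (that $\Conj_\vv(\hat\rho)$ lies in $\CA$, and the explicit translation vector in the $\eval_a\times\eval_b$ coordinates) are harmless elaborations of what the paper leaves implicit.
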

We call $\Conj_\vv(\hat \rho)-\hat \rho$ the {\em translation vector} of $\Conj_\vv$.
\begin{proof}
This follows from the computation in $\Aff(n)$:
$$\Conj_\vv(\hat \rho)(g)=(\vv,I) \cdot \big(\pi_1 \circ \hat \rho(g), \rho(g)\big) \cdot (-\vv,I)=
\big(\vv+\pi_1 \circ \hat \rho(g)+\rho(g)(-\vv), \rho(g)\big).
$$
\end{proof}

Let $\sim$ denote the equivalence relation on $\CA$ where 
\begin{equation}
\label{eq:sim}
\hat \rho_1 \sim \hat \rho_2 \quad \text{if there is a $\vv \in \C^n$ satisfying $\Conj_\vv(\hat \rho_1) =\hat \rho_2$.}
\end{equation}

\begin{cor}
The quotient $\CA/\sim$ is a complex vector space with operations induced by those of $\CA$. 
\end{cor}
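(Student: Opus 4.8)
The plan is to exhibit $\sim$ as the equivalence relation whose classes are the cosets of a linear subspace $W \subseteq \CA$; once this is done, the corollary reduces to the standard fact that a vector space modulo a subspace is again a vector space under the induced operations.

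First I would fix any $\hat\rho_0 \in \CA$ — the space $\CA$ is nonempty, since $g \mapsto (\0,\rho(g))$ is an affable representation — and set
$$W = \big\{\Conj_\vv(\hat\rho_0) - \hat\rho_0 \;:\; \vv \in \C^n\big\} \subseteq \CA,$$
the difference being taken with the vector space operations of Proposition \ref{prop:linear}. By Proposition \ref{prop:conjugation} the translation vector $\Conj_\vv(\hat\rho) - \hat\rho$ does not depend on $\hat\rho$, so $W$ is independent of the choice of $\hat\rho_0$. Transporting $W$ through the coordinate isomorphism $\eval_a \times \eval_b : \CA \to \C^n \times \C^n$ of Proposition \ref{prop:coordinates} and using formula \eqref{eq:translation vector}, I see that $W$ corresponds to $\{\big((I-\rho(a))\vv,\,(I-\rho(b))\vv\big) : \vv \in \C^n\}$, the image of the linear map $\vv \mapsto ((I-\rho(a))\vv,(I-\rho(b))\vv)$. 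Hence $W$ is a linear subspace of $\CA$.

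Next I would check that the relation \eqref{eq:sim} is exactly congruence modulo $W$. If $\Conj_\vv(\hat\rho_1) = \hat\rho_2$ then $\hat\rho_2 - \hat\rho_1 = \Conj_\vv(\hat\rho_1) - \hat\rho_1 \in W$; conversely, if $\hat\rho_2 - \hat\rho_1 = \Conj_\vv(\hat\rho_0) - \hat\rho_0$ for some $\vv$, then by basepoint-independence this also equals $\Conj_\vv(\hat\rho_1) - \hat\rho_1$, so $\hat\rho_2 = \Conj_\vv(\hat\rho_1)$ and $\hat\rho_1 \sim \hat\rho_2$. Thus $\CA/\!\sim$ coincides with the quotient vector space $\CA/W$. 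Finally, well-definedness of the induced addition and scalar multiplication follows because $W$ is closed under both operations, and the vector space axioms for $\CA/W$ are inherited from those of $\CA$. I do not expect any genuine obstacle here; the single point requiring care is the basepoint-independence that simultaneously makes $W$ well defined and linear, and that is precisely the content of Proposition \ref{prop:conjugation}.
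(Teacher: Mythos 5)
Your argument is correct and rests on exactly the same fact the paper uses, namely Proposition \ref{prop:conjugation}: the translation vector $\big((I-\rho(g))\vv,\rho(g)\big)$ is independent of the basepoint and linear in $\vv$. Packaging the translation vectors as a linear subspace $W\subseteq\CA$ and identifying $\CA/\!\sim$ with the quotient $\CA/W$ is just a slightly more structured rendering of the paper's one-line check that the operations descend to $\sim$-classes.
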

\begin{proof}
It needs to be observed that the operations of addition and scalar multiplication induce well defined actions on $\CA/\sim$. This follows from linearity of the translation vector of \eqref{eq:translation vector} in $\vv \in \C^n$. 
\end{proof}

Recall that $\rho$ is a fixed homomorphism. Recall the definition of $\Delta=\Delta_+ \sqcup \Delta_-$ in \eqref{eq:Delta} from the proof of Proposition \ref{prop:characteristic criterion} and recall that
$\rho$ is an oriented characteristic representation if and only if the projection of $\Delta = \Delta_+ \sqcup \Delta_-$ to $\Aut(F_2)$ is surjective.

We view $\GL(n,\C)$ as a subgroup of $\Aff(n)$. Conjugation by an element of $\GL(n,\C)$ induces an automorphism of $\Aff(n)$. 

We use $\GL(\CA)$ to denote the group of linear automorphisms of $\CA$ and $\overline{\GL}(\CA)$ to denote the collection of conjugate-linear automorphisms. 
Together, $\GL(\CA) \cup \overline{\GL}(\CA)$ forms a group.
We have the following:
\begin{lemma}
\label{lem:N}
There is a homomorphism $N: \Delta \to \GL(\CA) \cup \overline{\GL}(\CA)$ such that 
\begin{enumerate}
\item[(+)] If $(M,\psi) \in \Delta_+$ and $\hat \rho \in \CA$ then $N_{M,\psi} \in \GL(\CA)$ and 
$$N_{M,\psi}(\hat \rho)(g)=M \cdot \big(\hat \rho \circ \psi^{-1}(g)\big) \cdot M^{-1} \quad \text{for all $g \in F_2$}.$$
\item[(--)] If $(M,\psi) \in \Delta_-$ and $\hat \rho \in \CA$ then $N_{M,\psi} \in \overline{\GL}(\CA)$ and
$$N_{M,\psi}(\hat \rho)(g)=M \cdot \big(\overline{\hat \rho \circ \psi^{-1}(g)}\big) \cdot M^{-1} \quad \text{for all $g \in F_2$}.$$
\end{enumerate}
Each $N_{M,\psi}$ sends $\sim$-equivalence classes to $\sim$-equivalence classes and so induces an automorphism $N^\sim_{M,\psi} \in \GL(\CA/\sim) \cup \overline{\GL}(\CA/\sim)$. Furthermore, the induced map 
$$N^\sim:\Delta \to \GL(\CA/\sim) \cup \overline{\GL}(\CA/\sim) \quad \text{given by} \quad (M,\psi) \mapsto N^\sim_{M,\psi}$$ 
is a homomorphism. 
\end{lemma}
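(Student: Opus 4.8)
The plan is to build $N$ in stages: first check that the formulas in (+) and (--) genuinely produce affable representations, then that they are $\C$-linear resp.\ conjugate-linear, then that $N$ respects the (twisted) group law on $\tilde\Delta$, and finally that each $N_{M,\psi}$ permutes $\sim$-classes so that $N$ descends. Throughout I would regard $M \in \GL(n,\C)$ as the element $(\0,M) \in \Aff(n)$ and write $C_M$ for conjugation by it; a one-line computation in $\Aff(n)$ gives $C_M(\vv,P) = (M\vv, MPM^{-1})$, so $C_M$ is an automorphism of $\Aff(n)$ with $\pi_2 \circ C_M = C_M \circ \pi_2$ (conjugation in $\GL(n,\C)$ on the right) and $\pi_1 \circ C_M = M \cdot \pi_1$. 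Likewise entrywise conjugation $\kappa:(\vv,P) \mapsto (\overline\vv,\overline P)$ is a (non-linear) group automorphism of $\Aff(n)$, with $\kappa^2 = \mathrm{id}$.

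For well-definedness, in case (+) the map $N_{M,\psi}(\hat\rho) = C_M \circ \hat\rho \circ \psi^{-1}$ is a composite of group homomorphisms, hence a homomorphism $F_2 \to \Aff(n)$, and $\pi_2 \circ N_{M,\psi}(\hat\rho)(g) = M\rho(\psi^{-1}(g))M^{-1} = \rho(g)$ by the defining relation of $\tilde\Delta_+$, so $N_{M,\psi}(\hat\rho) \in \CA$; case (--) is identical with $C_M \circ \kappa$ in place of $C_M$ and the defining relation of $\tilde\Delta_-$. Linearity is then immediate from $\pi_1 \circ C_M = M\cdot\pi_1$: one gets $\eval_g(N_{M,\psi}(\hat\rho)) = M \cdot \eval_{\psi^{-1}(g)}(\hat\rho)$ in case (+), which is $\C$-linear in $\hat\rho$ by Proposition \ref{prop:linear}, while the extra $\kappa$ in case (--) replaces this by $M \cdot \overline{\eval_{\psi^{-1}(g)}(\hat\rho)}$, which is conjugate-linear. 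Via the isomorphism $\eval_a \times \eval_b$ of Proposition \ref{prop:coordinates} this identifies $N_{M,\psi}$ with a complex-linear resp.\ conjugate-linear endomorphism of $\CA$; bijectivity I would defer to the next step.

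The main step is verifying $N_{(M,\psi)\cdot(M',\psi')} = N_{M,\psi} \circ N_{M',\psi'}$, which I would carry out by evaluating both sides on an arbitrary $\hat\rho$ at an arbitrary $g \in F_2$, splitting into the four cases according to which of $\tilde\Delta_\pm$ the two factors lie in. This is essentially bookkeeping, and the single point requiring genuine attention is that when $(M,\psi) \in \tilde\Delta_-$ the map $N_{M,\psi}$ conjugates \emph{all} of $N_{M',\psi'}(\hat\rho)$, including the matrix $M'$ sitting inside it; the nonstandard multiplication on $\tilde\Delta$ — which inserts $\kappa$, i.e.\ replaces $M'$ by $\overline{M'}$, exactly when the first factor is in $\tilde\Delta_-$ — is precisely what is needed for the two sides to agree, and in the $(-,-)$ case one also uses $\kappa^2 = \mathrm{id}$. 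Granting this, $N$ is a homomorphism on the group $\tilde\Delta$; since its identity $(I,\mathrm{id}) \in \tilde\Delta_+$ maps to $\mathrm{id}_\CA$, each $N_{M,\psi}$ is invertible with inverse $N_{(M,\psi)^{-1}}$, and since the composite of two conjugate-linear bijections of $\CA$ is $\C$-linear, $\GL(\CA)\cup\overline{\GL}(\CA)$ is a group and $N$ maps $\tilde\Delta_\pm$ into the two pieces respectively.

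Finally, for the descent to $\CA/\sim$, fix $(M,\psi)$ and $\vv \in \C^n$ and compute $N_{M,\psi} \circ \Conj_\vv$. Writing $\Conj_\vv(\hat\rho) = c_\vv \circ \hat\rho$ with $c_\vv$ conjugation by $(\vv,I)$, the identity $(\0,M)(\vv,I) = (M\vv,I)(\0,M)$ in $\Aff(n)$ gives $C_M \circ c_\vv = c_{M\vv} \circ C_M$, and $\kappa \circ c_\vv = c_{\overline\vv} \circ \kappa$; combining these yields $N_{M,\psi} \circ \Conj_\vv = \Conj_{M\vv} \circ N_{M,\psi}$ in case (+) and $N_{M,\psi} \circ \Conj_\vv = \Conj_{M\overline\vv} \circ N_{M,\psi}$ in case (--). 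Either way $N_{M,\psi}$ carries each $\sim$-class onto a $\sim$-class, hence induces a well-defined (linear resp.\ conjugate-linear) bijection of the vector space $\CA/\sim$, and the assignment sending $(M,\psi)$ to this induced map is the desired homomorphism $N^\sim$ into $\GL(\CA/\sim) \cup \overline{\GL}(\CA/\sim)$. The principal obstacle is purely organizational: the four-case verification of the homomorphism law, where the twisted multiplication on $\tilde\Delta$ must be reconciled with the complex conjugation that $N_{M,\psi}$ applies to conjugate-inner representations.
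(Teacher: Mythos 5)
Your proposal is correct and follows essentially the same route as the paper: you check that the formula lands in $\CA$ using the defining relations of $\tilde \Delta_{\pm}$, deduce linearity resp.\ conjugate-linearity from Proposition \ref{prop:linear}, and verify compatibility with $\sim$ by showing $N_{M,\psi}$ intertwines $\Conj_{\vv}$ with $\Conj_{M\vv}$ (resp.\ $\Conj_{M\overline{\vv}}$), which is just the operator-level form of the paper's translation-vector computation. Your explicit four-case verification of the homomorphism law on $\tilde \Delta$ (and the invertibility of each $N_{M,\psi}$ that follows from it) is a detail the paper leaves implicit, and it checks out.
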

\begin{proof}
Since $M \in \GL(n,\C)$ and $\psi \in \Aut(F_2)$, it should be clear that the definitions provided for $N_{M,\psi}(\hat \rho)$ give a homomorphism $F_2 \to \Aff(n)$.
Writing $\hat \rho(g)=\big(\pi_1 \circ \hat \rho(g),\rho(g)\big)$ (using affability of $\hat \rho$) we see
that when $(M,\psi) \in \Delta_+$ we have
\begin{equation}
\label{eq:N action}
\begin{array}{rcl}
N_{M,\psi}(\hat \rho)(g) & = & \Big(M \cdot \pi_1 \circ \hat \rho \circ \psi^{-1}(g), M \cdot \big(\rho \circ \psi^{-1}(g)\big) \cdot M^{-1}\Big) \\
& = & \big(M \cdot \pi_1 \circ \hat \rho \circ \psi^{-1}(g), \rho(g)\big)
\end{array}
\end{equation}
with the last step given by definition of $\Delta_+$ in \eqref{eq:Delta}. To see linearity observe that $\pi_1 \circ \hat \rho \circ \psi^{-1}(g)$ varies linearly in $\hat \rho$ by Proposition \ref{prop:linear} and we are simply postcomposing with the linear action of $M \in \GL(n, \C)$. 
Similarly if $(M,\psi) \in \Delta_-$,
\begin{equation}
\label{eq:N action-}
\begin{array}{rcl}
N_{M,\psi}(\hat \rho)(g) & = & \big(M \cdot \overline{\pi_1 \circ \hat \rho \circ \psi^{-1}(g)}, M \cdot\overline{\rho \circ \psi^{-1}(g)} \cdot M^{-1}\big) \\
& = & \big(M \cdot \overline{\pi_1 \circ \hat \rho \circ \psi^{-1}(g)}, \rho(g)\big).
\end{array}
\end{equation}
Observe that $N_{M,\psi}$ is conjugate-linear in this case.

Now we must check that the linear action respects $\sim$ equivalence classes. Suppose $\hat \rho_1 \sim \hat \rho_2$.
By Proposition \ref{prop:conjugation}, this is true if and only if there is a $\vv \in \C^n$ such that
\begin{equation}
\label{eq:difference}
(\hat \rho_1-\hat \rho_2)(g)=\Big(\big(I-\rho(g)\big)\vv, \rho(g)\Big) \quad \text{for all $g \in F_2$}.
\end{equation}
Fix such a $\vv$ and let $\hat \rho_\vv \in \CA$ be defined as in the right side of \eqref{eq:difference}.
Then by linearity or conjugate-linearity of $N_{M,\psi}$ we have
$$N_{M,\psi}(\hat \rho_1) - N_{M,\psi}(\hat \rho_2)=N_{M,\psi}(\hat \rho_\vv).$$
By \eqref{eq:N action} if $(M,\psi) \in \Delta_+$ we have
$$N_{M,\psi}(\hat \rho_\vv)(g)=\Big(M \cdot \big(I-\rho \circ \psi^{-1}(g)\big)\vv, \rho(g)\Big)=
\Big(\big(I-\rho(g)\big)M\vv, \rho(g)\Big),$$
where we are using the identity $M \cdot \big(\rho \circ \psi^{-1}(g)\big) \cdot M^{-1}=\rho(g)$ again in the second step. Then Proposition \ref{prop:conjugation} tells us that $N_{M,\psi}(\hat \rho_1) \sim N_{M,\psi}(\hat \rho_2)$. Similarly if $(M,\psi) \in \Delta_-$ we have
$$N_{M,\psi}(\hat \rho_\vv)(g)=\Big(M \cdot \overline{\big(I-\rho \circ \psi^{-1}(g) \big)\vv}, \rho(g)\Big)= \Big(\big(I-\rho(g)\big)M \overline{\vv}, \rho(g)\Big),$$
and again Proposition \ref{prop:conjugation} tells us that $N_{M,\psi}(\hat \rho_1) \sim N_{M,\psi}(\hat \rho_2)$.
\end{proof}

It will be useful later to note that inner automorphisms act trivially on $\CA/\sim$:

\begin{prop}
\label{prop:inner automorphisms act trivially}
Let $\psi_h \in \Aut(F_2)$ denote the inner automorphism $g \mapsto h g h^{-1}$. Then for all $z \in \C \smallsetminus \{0\}$, we have
$$(z\rho(h),\psi_h) \in \Delta_+ \quad \text{and} \quad  N^\sim_{z\rho(h),\psi_h}([\hat \rho])=z [\hat \rho] \quad \text{for all $[\hat \rho] \in \CA/\sim$.}$$
In particular, $N^\sim_{\rho(h),\psi_h}$ acts trivially on $\CA/\sim$.
\end{prop}
\begin{proof}
Fix $z \in \C \smallsetminus \{0\}$ and fix $h \in F_2$. Recall that $(z\rho(h),\psi_h) \in \Delta_+$ if and only if 
$$\big(z \rho(h)\big) \cdot \big(\rho \circ \psi_h^{-1}(g)\big) \cdot \big(z \rho(h)\big)^{-1}=\rho(g) \quad \text{for all $g \in G$}.$$
The $z$ and $z^{-1}$ cancel and left side simplifies as
$$\big(z \rho(h)\big) \cdot \big(\rho \circ \psi_h^{-1}(g)\big) \cdot \big(z \rho(h)\big)^{-1} = 
\rho(h) \rho(h^{-1} g h) \rho(h)^{-1}=\rho(g).$$
Now fix any $\hat \rho \in \CA$ and observe $\hat \rho \circ \psi_h^{-1}(g)=\hat \rho(h^{-1}) \hat \rho(g) \hat \rho(h)$.
Choose $\vv,\vw \in \C^n$ satisfying $\hat \rho(g)=\big(\vv,\rho(g)\big)$ and $\hat \rho(h)=\big(\vw,\rho(h)\big)$. Then $\hat \rho(h)^{-1}=(-\rho(h)^{-1} \vw, \rho(h)^{-1})$ and thus
$$\begin{array}{rcl}
\hat \rho \circ \psi_h^{-1}(g) & = & (-\rho(h)^{-1} \vw, \rho(h)^{-1})\cdot \big(\vv,\rho(g)\big) \cdot \big(\vw,\rho(h)\big) \\
& = & \Big(\rho(h)^{-1}\big(\rho(g)-I\big)\vw+\rho(h)^{-1}\vv,\rho(h^{-1}gh)\Big).
\end{array}
$$
By definition,
$$N_{z\rho(h),\psi_h}(\hat \rho)(g)=\big(z\rho(h)\big) \cdot \big(\hat \rho \circ \psi_h^{-1}(g)\big) \cdot \big(z \rho(h)\big)^{-1}.$$
By combining with the above we see
$$\big[N_{z\rho(h),\psi_h}(\hat \rho)-z\hat \rho\big](g)=\Big(\big(I-\rho(g)\big) (-z\vw),\rho(g)\Big),$$
and so by Proposition \ref{prop:conjugation}
$N_{z\rho(h),\psi_h}(\hat \rho) \sim z\hat \rho$. 
\end{proof}

Fix an integer $k \geq 2$. Recall $P_k \subset F_2$ denotes the subgroup generated by the $k$-th powers of primitive elements in $F_2$. 
Assume $P_k \subset \ker \rho$. The collection of {\em $k$-affable representations} is 
\begin{equation}
\label{eq:k-affable}
\CA_k=\{\hat \rho \in \CA~:~ P_k \subset \ker \hat \rho\}.
\end{equation}
As a consequence of Proposition \ref{prop:linear}, $\CA_k$ is a linear subspace of $\CA$:
it is the intersection of the kernels of the linear maps $\eval_{p^k}$ taken over all primitive $p \in F_2$.

We have:
\begin{prop}~
\label{prop:interaction with A k}
\begin{enumerate}
\item Each $\sim$-equivalence class is either contained in or disjoint from $\CA_k$.
\item For each $(M,\psi) \in \Delta$, $\CA_k$ is invariant under $N_{M,\psi}$.
\end{enumerate}
\end{prop}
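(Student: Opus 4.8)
The plan is to treat the two parts separately, each as a short consequence of structure already established. For part (1), I would first note that for every primitive $p\in F_2$ the linear map $\eval_{p^k}$ is constant on $\sim$-equivalence classes. This is immediate from Proposition \ref{prop:conjugation}: if $\Conj_\vv(\hat\rho_1)=\hat\rho_2$, then evaluating the translation vector at $p^k$ gives $\eval_{p^k}(\hat\rho_2)-\eval_{p^k}(\hat\rho_1)=(I-\rho(p^k))\vv$, and since $P_k\subset\ker\rho$ we have $\rho(p^k)=I$, so the difference is $\0$. As observed after \eqref{eq:k-affable}, $\CA_k$ is precisely the intersection $\bigcap_p\ker(\eval_{p^k})$ over primitive $p$. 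Hence if a $\sim$-class $C$ contains any point $\hat\rho_0\in\CA_k$, then every $\hat\rho\in C$ has $\eval_{p^k}(\hat\rho)=\eval_{p^k}(\hat\rho_0)=\0$ for all primitive $p$, so $C\subset\CA_k$; otherwise $C$ is disjoint from $\CA_k$.

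For part (2), I would compute $\ker N_{M,\psi}(\hat\rho)$ from the formulas in Lemma \ref{lem:N}. For $(M,\psi)\in\tilde\Delta_+$ we have $N_{M,\psi}(\hat\rho)(g)=M\cdot\hat\rho(\psi^{-1}(g))\cdot M^{-1}$ in $\Aff(n)$; since conjugation by an element of $\GL(n,\C)\subset\Aff(n)$ fixes the identity of $\Aff(n)$, this yields $\ker N_{M,\psi}(\hat\rho)=\psi(\ker\hat\rho)$. The case $(M,\psi)\in\tilde\Delta_-$ is the same once one observes that entrywise complex conjugation likewise fixes the identity of $\Aff(n)$. Now if $\hat\rho\in\CA_k$ then $P_k\subset\ker\hat\rho$, and since $P_k$ is characteristic in $F_2$ we get $P_k=\psi(P_k)\subset\psi(\ker\hat\rho)=\ker N_{M,\psi}(\hat\rho)$. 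Together with the fact (from Lemma \ref{lem:N}) that $N_{M,\psi}(\hat\rho)\in\CA$, this shows $N_{M,\psi}(\hat\rho)\in\CA_k$, so $\CA_k$ is $N_{M,\psi}$-invariant. Equivalently, one may argue pointwise: $\psi^{-1}$ carries each primitive $p$ to a primitive element, so $N_{M,\psi}(\hat\rho)(p^k)$ is a conjugate (or conjugate-and-conjugated) of $\hat\rho$ evaluated at a $k$-th power of a primitive element, hence the identity.

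I do not expect a genuine obstacle here: the statement is essentially a corollary, and the only points needing care are bookkeeping. One must keep in mind that ``$P_k\subset\ker\hat\rho$'' for a homomorphism $\hat\rho$ is equivalent to ``$\hat\rho(p^k)$ is the identity for all primitive $p$'', using normality of $\ker\hat\rho$ and the fact that $P_k$ is normally generated by such powers --- this is what legitimizes the description $\CA_k=\bigcap_p\ker(\eval_{p^k})$ used in part (1) and the pointwise reduction in part (2). The remaining two things to record are that $P_k$ is characteristic (so $\psi(P_k)=P_k$ for all $\psi\in\Aut(F_2)$) and that both conjugation by $M$ and entrywise complex conjugation fix the identity element of $\Aff(n)$.
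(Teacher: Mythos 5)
Your proposal is correct and follows essentially the same route as the paper: the paper's one-line proof observes that $\Conj_\vv$ and $N_{M,\psi}$ change $\ker\hat\rho$ at most by an automorphism of $F_2$, and then invokes that $P_k$ is characteristic, which is exactly your argument for part (2). Your part (1) phrases the same triviality through the linearity of $\eval_{p^k}$ and the translation-vector formula (using $\rho(p^k)=I$) rather than noting that conjugation in $\Aff(n)$ preserves the kernel outright, but this is only a cosmetic variation, and your bookkeeping remarks (that $\CA_k=\bigcap_p\ker\eval_{p^k}$ and that $\psi(P_k)=P_k$) are the right ones.
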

\begin{proof}
Since $P_k \subset F_2$ is characteristic, if $f:\CA \to \CA$ is such that $\ker f(\hat \rho)$ differs from $\ker \hat \rho$ by an automorphism of $F_2$ for every affable $\hat \rho$, then
$\CA_k$ is invariant under $f$. This holds in the cases of $f$ given by $\Conj_\vv$ and $N_{M,\psi}$, and these cases cover the respective cases of the proposition.
\end{proof}

Summarizing the results above, we see that $\CA_k/\sim$ is a linear subspace of $\CA/\sim$, and $N^\sim_{M,\psi}(\CA_k/\sim)=\CA_k/\sim$ for all 
$(M,\psi) \in \Delta$.

Choose any subspace $\CI \subset \CA_k/\sim$ which is invariant under the action of $N^\sim_{M,\psi}$ for $(M,\psi) \in \Delta$.
Ideally we would take $\CI=\CA_k/\sim$ to get the largest invariant space possible. (Later in the proof of Theorem \ref{thm:k odd}
we do not prove that our choice of $\CI$ is all of $\CA_k/\sim$.)

Let $m= \dim \CI$. Choose $\hat \rho_1, \ldots, \hat \rho_m \in \CA_k$ such that
the images in $\CA_k/\sim$ form a basis for $\CI$. In block matrix form we define
\begin{equation}
\label{eq:tilde rho}
\begin{array}{l}
\tilde \rho:F_2 \to \GL(n+m,\C); \quad g \mapsto
\left(\begin{array}{rr}
\rho(g) & Q(g) \\
0 & I 
\end{array}\right)\in \GL(n+m,\C) \\ 
\text{where} \quad  Q(g)=\left(\begin{array}{rrrr} \pi_1 \circ \hat \rho_1(g) & \pi_1 \circ \hat \rho_2(g) & \ldots & \pi_1 \circ \hat \rho_m(g) \end{array}\right).
\end{array}
\end{equation}
Here each $\pi_1 \circ \hat \rho_i(g)$ is interpreted as the $i$-th column vector of $Q(g)$. Then:

\begin{thm}
\label{thm:tilde rho}
Assume $\rho:F_2 \to \GL(n,\C)$ is an oriented characteristic representation with $P_k \subset \ker \rho$. 
Define $\CI$, $m$, $\hat \rho_1$, \ldots, $\hat \rho_m$ and $\tilde \rho$ as above. 
Then $\tilde \rho$ is also an oriented characteristic representation with 
$P_k \subset \ker \tilde \rho$.
Furthermore, there is a short exact sequence of the form
$$1 \to \tilde \rho(\ker \rho) \to F_2/\ker \tilde \rho \to F_2 / \ker \rho \to 1,$$
and $\tilde \rho(\ker \rho)$ is a torsion-free abelian group. 
\end{thm}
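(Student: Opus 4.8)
The plan is to establish four things, in order: that $\tilde\rho$ is a group homomorphism, that $P_k\subset\ker\tilde\rho$, that $\tilde\rho$ is oriented characteristic, and that the short exact sequence holds. For the first two, affability of each $\hat\rho_i$ --- that $\hat\rho_i(gh)=\hat\rho_i(g)\hat\rho_i(h)$ in $\Aff(n)$ --- unwinds column by column into the cocycle identity $Q(gh)=Q(g)+\rho(g)Q(h)$, after which a one-line block multiplication of the matrices in \eqref{eq:tilde rho} gives $\tilde\rho(g)\tilde\rho(h)=\tilde\rho(gh)$. For any primitive $p\in F_2$ we have $\rho(p^k)=I$ because $P_k\subset\ker\rho$, and $\pi_1\circ\hat\rho_i(p^k)=0$ for every $i$ because $\hat\rho_i\in\CA_k$, so $Q(p^k)=0$ and $\tilde\rho(p^k)=I$; since such elements generate $P_k$, we get $P_k\subset\ker\tilde\rho$.

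For the oriented-characteristic property I use two facts recorded in the excerpt: $\rho$ is oriented characteristic if and only if the projection $\tilde\Delta\to\Aut(F_2)$ (see the discussion following \eqref{eq:Delta}) is surjective, and $\psi_1,\psi_2,\psi_-$ of \eqref{eq:generators for Out} together with the inner automorphisms generate $\Aut(F_2)$ (proof of Proposition \ref{prop:characteristic criterion}). Since inner automorphisms of $F_2$ are automatically in the image of the analogous projection for $\tilde\rho$ (as $\tilde\rho$ is a homomorphism), it suffices to produce $\tilde M_1,\tilde M_2,\tilde M_-\in\GL(n+m,\C)$ with $\tilde M_j\,\tilde\rho\circ\psi_j^{-1}(g)=\tilde\rho(g)\,\tilde M_j$ for all $g\in F_2$ and $j\in\{1,2\}$, and $\tilde M_-\,\overline{\tilde\rho\circ\psi_-^{-1}(g)}=\tilde\rho(g)\,\tilde M_-$ for all $g\in F_2$. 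Since $\rho$ is oriented characteristic, fix such data for $\rho$ itself, i.e.\ $(M_1,\psi_1),(M_2,\psi_2)\in\tilde\Delta_+$ and $(M_-,\psi_-)\in\tilde\Delta_-$, and seek each $\tilde M_j$ block-upper-triangular with top-left block $M_j$, bottom-right block an unknown $A_j\in\GL(m,\C)$, zero bottom-left block, and an unknown $n\times m$ top-right block $R_j$. A blockwise expansion of the required relation shows that the $(1,1)$ block is the already-known identity $M_j\,\rho\circ\psi_j^{-1}(g)=\rho(g)M_j$ (with a conjugate when $j=-$), the bottom row is automatic, and the one new requirement is the $(1,2)$ block, which rearranges to
$$M_j\,Q\circ\psi_j^{-1}(g)-Q(g)A_j=\bigl(\rho(g)-I\bigr)R_j\qquad\text{for all }g\in F_2$$
(with a bar over $Q\circ\psi_-^{-1}(g)$ when $j=-$). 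This is precisely the concrete form of the $N^\sim_{M_j,\psi_j}$-invariance of $\CI$: by \eqref{eq:N action}, or \eqref{eq:N action-} in the conjugate case, the $i$-th column of the first term equals $\pi_1\circ N_{M_j,\psi_j}(\hat\rho_i)(g)$; and since $\CI$ is $N^\sim_{M_j,\psi_j}$-invariant with $\{[\hat\rho_i]\}$ a basis of $\CI$, there are a matrix $A_j\in\GL(m,\C)$ --- invertible because $N^\sim_{M_j,\psi_j}$ is a bijection of $\CI$ (its inverse in $\tilde\Delta$ also preserving $\CI$; in the case $j=-$ because a conjugate-linear bijection carries a basis to a basis) --- and vectors $\vv_1^{(j)},\dots,\vv_m^{(j)}\in\C^n$ with $N_{M_j,\psi_j}(\hat\rho_i)=\Conj_{\vv_i^{(j)}}\bigl(\sum_\ell (A_j)_{\ell i}\hat\rho_\ell\bigr)$. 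Evaluating at $g$, applying $\pi_1$, and invoking Proposition \ref{prop:conjugation} turns this into
$$\pi_1\circ N_{M_j,\psi_j}(\hat\rho_i)(g)=\sum_\ell (A_j)_{\ell i}\,\pi_1\circ\hat\rho_\ell(g)+\bigl(I-\rho(g)\bigr)\vv_i^{(j)},$$
which, read column by column, is exactly the displayed $(1,2)$ condition with $R_j$ the matrix whose $i$-th column is $-\vv_i^{(j)}$. Hence each $\tilde M_j$ exists and is invertible, so $\tilde\rho$ is oriented characteristic.

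For the short exact sequence, the ``top-left block'' map $p\colon\tilde\rho(F_2)\to\GL(n,\C)$ is a homomorphism with $p\circ\tilde\rho=\rho$, so $\ker\tilde\rho\subset\ker\rho$, and restricting $\tilde\rho$ to $\ker\rho$ identifies $\ker\rho/\ker\tilde\rho$ with $\tilde\rho(\ker\rho)$. For $g\in\ker\rho$ the matrix $\tilde\rho(g)$ has identity diagonal blocks and off-diagonal block $Q(g)$, so by the cocycle identity $g\mapsto Q(g)$ is a homomorphism $\ker\rho\to(\Mat_{n\times m}(\C),+)$; in particular $\tilde\rho(\ker\rho)$ is abelian and torsion-free, hence isomorphic to $\Z^d$ as soon as it is finitely generated --- which it is whenever $\ker\rho$ is, for instance whenever $\rho$ has finite image --- with $d$ its rank. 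The sequence $1\to\Z^d\to F_2/\ker\tilde\rho\to F_2/\ker\rho\to1$ is then the standard one attached to the chain $\ker\tilde\rho\subset\ker\rho\subset F_2$.

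The only genuinely substantive step is the oriented-characteristic verification; everything else is bookkeeping. Its heart is recognizing that the off-diagonal block equation $\tilde M_j$ is forced to satisfy is exactly the $N^\sim_{M_j,\psi_j}$-invariance of $\CI$, spelled out on the vectors $\pi_1\circ\hat\rho_i(g)$, with the conjugation ambiguity in $\CA/\sim$ accounting for the correction term $\bigl(\rho(g)-I\bigr)R_j$. Once one matches columns of $Q\circ\psi_j^{-1}$ with $\pi_1\circ N_{M_j,\psi_j}(\hat\rho_i)$ via Lemma \ref{lem:N}, the data $(R_j,A_j)$ is determined, and the remaining checks --- invertibility of $\tilde M_j$, and the conjugate case $j=-$ --- are routine.
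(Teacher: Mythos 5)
Your proposal is correct and follows essentially the same route as the paper: the cocycle identity $Q(g_1g_2)=Q(g_1)+\rho(g_1)Q(g_2)$ for the homomorphism and kernel claims, the block structure for the exact sequence, and, for the characteristic property, the $N^\sim_{M,\psi}$-invariance of $\CI$ together with the $\Conj_{\vv_i}$ correction and a change-of-basis matrix --- your single block-upper-triangular conjugator $\tilde M_j$ with blocks $(M_j,R_j,A_j)$ is exactly the product of the three conjugations (by $\diag(M,I)$, the unipotent block with $V$, and $\diag(I,R)$) used in the paper, and restricting to $\psi_1,\psi_2,\psi_-$ plus inner automorphisms via the criterion of Proposition \ref{prop:characteristic criterion} is a harmless repackaging of the paper's argument for arbitrary $\psi$. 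Your explicit caveat that $\tilde\rho(\ker\rho)\cong\Z^d$ requires finite generation (e.g.\ when $\rho$ has finite image, as in all applications) is if anything slightly more careful than the paper, which asserts freeness from the block form alone.
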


\begin{proof}
First we will show that $\tilde \rho$ is a group homomorphism. Considering the block form of the image, observe that it suffices to understand the top right block (since we are given that $\rho$ is a homomorphism). Checking that $\tilde \rho(g_1 g_2)=\tilde \rho(g_1) \tilde \rho(g_2)$ then reduces to checking that 
$$Q(g_1 g_2)=Q(g_1)+\rho(g_1) Q(g_2).$$
Checking this for column $i$ amounts to checking that 
$$\pi_1 \circ \hat \rho_i(g_1 g_2)=\pi_1 \circ \hat \rho_i(g_1)+\rho(g_1) \cdot \pi_1 \circ \hat \rho_i(g_2)$$
which is true because $\hat \rho_i$ is a homomorphism to $\Aff(n)$ which has product rule as in \eqref{eq:product rule}.

%Observe that $P_k \subset \tilde \rho$ since $P_k \subset \ker \rho$ and each $\hat \rho_i \in \CA_k$. 
From \eqref{eq:tilde rho} and by definition of $\CA_k$ we see that $\tilde \rho(g^k)=I$ for each primitive $g \in F_2$ 
guaranteeing that $P_k \subset \ker \tilde \rho$. 

Exactness of the provided sequence should be clear. The group $\tilde \rho(\ker \rho)$ is torsion-free and abelian because for each $g \in \ker \rho$ we have
$$\tilde \rho(g)=\left(\begin{array}{rr} I & Q(g) \\ 0 & I \end{array}\right).$$
In particular, $\tilde \rho(\ker \rho)$ is an additive subgroup of $\C^{mn}$.

It remains to show that $\tilde \rho$ is an oriented characteristic representation. Choose any $\psi \in \Aut(F_2)$. 
Let $s \in \{+,-\}$ be such that $\psi \in \Aut_s(F_2)$. Define
$$\tilde \rho_0:F_2 \to \GL(n+m,\C) \quad \text{by} \quad
\tilde \rho_0(g)=\begin{cases}
\tilde \rho \circ \psi^{-1}(g) & \quad \text{if $s=+$} \\
\overline{\tilde \rho \circ \psi^{-1}(g)} & \quad \text{if $s=-$}. \\
\end{cases}$$
We need to show that $\tilde \rho_0$ is conjugate by an element of $\GL(m+n,\C)$ to $\tilde \rho$.
We will demonstrate this by applying a sequence of conjugations.

First since $\rho$ is an oriented characteristic representation, there is a matrix
$M \in \GL(n,\R)$ such that $(M,\psi) \in \Delta_s$. This guarantees that either 
\begin{equation}
\label{eq:characteristic2}
M \cdot [\rho \circ \psi^{-1}(g)] \cdot M^{-1}=\rho(g) 
\quad \text{or} \quad
M \cdot \overline{\rho \circ \psi^{-1}(g)} \cdot M^{-1}=\rho(g) 
\end{equation}
for all $g \in F_2$ depending on the sign $s$. Define $\tilde \rho_1$ to be a conjugate of $\tilde \rho_0$ formed as follows:
$$\tilde \rho_1(g)=\left(\begin{array}{rr}
M & 0 \\
0 & I 
\end{array}\right) \cdot \tilde \rho_0(g) \cdot
\left(\begin{array}{rr}
M^{-1} & 0 \\
0 & I 
\end{array}\right).
$$
Recall from Lemma \ref{lem:N} there is an $N_{M,\psi}$ in $\GL(\CA)$ or $\overline{\GL(\CA)}$ (depending on $s$)
describing the conjugation action on $\CA$. Let $\mathrm{Col}_i(X)$ denote the $i$-th column of a matrix $X$. 
By \eqref{eq:characteristic2} and description of $N_{M,\psi}$ we have
$$\tilde \rho_1(g)=\left(\begin{array}{rr}
\rho(g) & Q_1(g) \\
0 & I 
\end{array}\right)
\quad \text{where} \quad
\quad \mathrm{Col}_i\big(Q_1(g)\big)=\pi_1 \big( N_{M,\psi}(\hat \rho_i)(g)\big)$$
for $i \in \{1,\ldots,m\}$. 

Let $\tilde \CI \subset \CA_k$ denote the preimage of $\CI$ under the quotient map $\CA_k \to \CA_k/\sim$; this is the union of the equivalence classes in $\CI$. Recall that $\CI \subset \CA_k/\sim$ is $N^\sim_{M,\psi}$-invariant. In addition $\sim$-equivalence classes are permuted by $N_{M,\psi}$; see Lemma \ref{lem:N}. It follows that $\tilde \CI$ is $N_{M,\psi}$-invariant. We therefore have that $N_{M,\psi}(\hat \rho_i)$ lies in $\tilde \CI$ for $i \in \{1, \ldots, m\}$. Let $\CI_L=\mathrm{span} \{\hat \rho_1, \ldots, \hat \rho_m\}$. 
From our choice of $\hat \rho_1, \ldots, \hat \rho_m$, the space $\CI_L$ is a lift of $\CI$; i.e., the quotient map $\CA_k \to \CA_k/\sim$ restricts to an isomorphism $\CI_L \to \CI$. Recall that the $\sim$-equivalence classes are $\Conj_\vv$ orbits; see \eqref{eq:sim}. Thus for each $i \in \{1,\ldots,m\}$ there is a unique vector $\vv_i$ such that 
$$\Conj_{\vv_i} \circ N_{M,\psi}(\hat \rho_i) \in \CI_L.$$
We now define a homomorphism $\tilde \rho_2:F_2 \to \GL(n+m,\C)$ conjugate to $\tilde \rho_1$ by
$$\tilde \rho_1(g)=\left(\begin{array}{rr}
I & V \\
0 & I 
\end{array}\right) \cdot \tilde \rho_1(g) \cdot
\left(\begin{array}{rr}
I & -V \\
0 & I 
\end{array}\right)
\quad \text{where} \quad
V =\left(\begin{array}{rrr} \vv_1 & \ldots & \vv_m \end{array}\right).
$$
By definition of $\Conj$ in \eqref{eq:conj} we see that 
$$\tilde \rho_2(g)=\left(\begin{array}{rr}
\rho(g) & Q_2(g) \\
0 & I 
\end{array}\right)
\quad \text{where} \quad
\mathrm{Col}_i\big(Q_2(g)\big)=\pi_1 \big(\Conj_{\vv_i} \circ N_{M,\psi}(\hat \rho_i)\big)$$
for all $i \in \{1,\ldots,m\}$.

Let $p$ denote the isomorphism $\CI_L \to \CI$ mentioned above. Since this is an isomorphism, there is an 
$N_L \in \GL(\CI_L) \cup \overline{\GL}(\CI_L)$ such that $p \circ N_L=N^\sim_{M,\psi}|_{\mathcal I} \circ p$. Then in particular we have
$$\Conj_{\vv_i} \circ N_{M,\psi}(\hat \rho_i)=N_L(\hat \rho_i)
\quad \text{for all $i \in \{1,\ldots,m\}$.}$$
As a consequence we see that $\big\{\Conj_{\vv_i} \circ N_{M,\psi}(\hat \rho_i):~i=1,\ldots,m\big\}$ is a basis for $\CI_L$. Thus there is a matrix $R=(R_{i,j}) \in \GL(m,\C)$ such that 
$$\hat \rho_j=\sum_{i=1}^m R_{i,j} \Conj_{\vv_i} \circ N_{M,\psi}(\hat \rho_i)
\quad \text{for all $j \in \{1,\ldots,m\}$.}$$
Then by linearity of the evaluation maps (see Proposition \ref{prop:linear}), for each $g \in F_2$ we have
\begin{equation}
\label{eq:eval instance}
\pi_1 \big(\hat \rho_j(g)\big)=\sum_{i=1}^m R_{i,j} \pi_1\big(\Conj_{\vv_i} \circ N_{M,\psi}(\hat \rho_i)(g)\big)
\quad \text{for all $j \in \{1,\ldots,m\}$.}
\end{equation}
Recall the left side gives the columns of $Q(g)$, which is the top right submatrix of $\tilde \rho(g)$; see \eqref{eq:tilde rho}. Thus this equation expresses column $j$ of $Q(g)$ as a linear combination of columns of $Q_2(g)$ with weights given by entries in the $j$-th column of $R$.
Thus we have that $Q_2(g) \cdot R=Q(g)$ for all $g$.
We define the conjugate $\tilde \rho_3$ of $\tilde \rho_2$ by 
$$\tilde \rho_3(g)=\left(\begin{array}{rr}
I & 0 \\
0 & R^{-1} \\ 
\end{array}\right) \cdot \tilde \rho_2(g) \cdot
\left(\begin{array}{rr}
I & 0 \\
0 & R 
\end{array}\right)=\left(
\begin{array}{rr}
\rho(g) & Q_2(g) R \\
0 & 1
\end{array}\right)=\tilde \rho(g).
$$
Since $\tilde \rho_3=\tilde \rho$ we have produced the desired conjugacy.
\end{proof}

\subsection{Case \texorpdfstring{$k=6$}{k=6}}
We define $\rho_6$ to be the tensor product $\rho_2 \otimes \rho_3$ where
$\rho_2$ and $\rho_3$ are defined as in \eqref{eq:rho_2} and \eqref{eq:rho_k}.
Then $\rho_6$ may be thought of as a homomorphism $F_2 \to \GL(9,\C)$. Letting $\omega=e^{\frac{2 \pi i}{3}}$, we have the formulas:
$$\rho_6(a)=\diag(-1,-\omega,-\omega^2; -1,-\omega,-\omega^2; 1,\omega,\omega^2),$$
$$\rho_6(b)=\left(\begin{array}{rrr|rrr|rrr}
0 & 1 & 0 & 0 & 0 & 0 & 0 & 0 & 0 \\
0 & 0 & 1 & 0 & 0 & 0 & 0 & 0 & 0 \\
1 & 0 & 0 & 0 & 0 & 0 & 0 & 0 & 0 \\
\hline
 0 & 0 & 0 & 0 & -1 & 0 & 0 & 0 & 0 \\
0 & 0 & 0 & 0 & 0 & -1 & 0 & 0 & 0 \\
0 & 0 & 0 & -1 & 0 & 0 & 0 & 0 & 0 \\
\hline
 0 & 0 & 0 & 0 & 0 & 0 & 0 & -1 & 0 \\
0 & 0 & 0 & 0 & 0 & 0 & 0 & 0 & -1 \\
0 & 0 & 0 & 0 & 0 & 0 & -1 & 0 & 0
\end{array}\right).$$

Applying the improvement algorithm of Theorem \ref{thm:tilde rho} to $\rho_6$
can be shown by calculation to give rise to the representation
$\tilde \rho_6:F_2 \to \GL(12,\C)$ defined as block matrices as
$$\tilde \rho_6(a)=\left(\begin{array}{rr}
\rho_6(a) & 0 \\
0 & I \end{array}\right) \quad \text{and}$$
$$\tilde \rho_6(b)=\left(\begin{array}{rr}
\rho_6(b) & B \\
0 & I \end{array}\right)
\quad \text{where} \quad B=\left(\begin{array}{rrr}
1 & 0 & 0 \\
0 & 0 & 0 \\
-1 & 0 & 0 \\
0 & 1 & 0 \\
0 & 0 & 0 \\
0 & -1 & 0 \\
0 & 0 & 0 \\
0 & 0 & 1 \\
0 & 0 & 0
\end{array}\right).$$

We will not present the computational proof that $\tilde \rho_6$ arises from $\rho_6$ by applying 
Theorem \ref{thm:tilde rho} with $\CI=\CA_6$. However we will demonstrate that 
it is an oriented characteristic representation:
\begin{prop}
\label{prop:6}
The homomorphism $\tilde \rho_6$ is an oriented characteristic representation. The kernel $\ker \tilde \rho_6$ contains $P_6$ and is infinite index in $F_2$. Furthermore, there is a short exact sequence of groups of the form
$$1 \to \Z^{18} \to F_2/\ker \tilde \rho_6 \to C_2 \times C_2 \times H(\Z/3\Z) \to 1.$$
\end{prop}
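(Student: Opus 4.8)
The plan is to dispatch the four claims in turn, leaning on Section~3 and isolating the genuinely new content into a single explicit matrix verification together with one rank computation.

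First I would record that $\rho_6=\rho_2\otimes\rho_3$ is an oriented characteristic representation, by Proposition~\ref{prop:basic operation} applied to Propositions~\ref{prop:rho 2} and~\ref{prop:rho odd}; concretely one may take the matrices of Proposition~\ref{prop:characteristic criterion} for $\rho_6$ to be the tensor products of those supplied for $\rho_2$ and $\rho_3$. Next I would identify $\rho_6(F_2)$: the only scalar matrix in $\rho_2(F_2)\cong C_2\times C_2$ is the identity, so $(A,B)\mapsto A\otimes B$ is injective on $\rho_2(F_2)\times\rho_3(F_2)$, and hence $\rho_6(F_2)$ is isomorphic to the image of $F_2$ under $g\mapsto(\rho_2(g),\rho_3(g))$; this image surjects onto both factors, whose orders $4$ and $27$ are coprime, so by Goursat's lemma it is all of $C_2\times C_2\times H(\Z/3\Z)$. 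Thus $F_2/\ker\rho_6\cong C_2\times C_2\times H(\Z/3\Z)$, a group of order $108$, which will be the quotient in the asserted extension.

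Since $F_2$ is free, $\tilde\rho_6$ is automatically a well-defined homomorphism, with no relations to check. To show it is oriented characteristic I would verify the criterion of Proposition~\ref{prop:characteristic criterion} directly, seeking $M_1,M_2,M_-\in\GL(12,\C)$ of block form $\left(\begin{smallmatrix}M_i^{(6)}&C_i\\0&R_i\end{smallmatrix}\right)$, where $M_i^{(6)}$ is the corresponding $\rho_6$-matrix above, $R_i\in\GL(3,\C)$, and $C_i\in\Mat_{9\times3}(\C)$; with this ansatz the three pairs of matrix identities reduce to linear equations in the $C_i$ and $R_i$, whose solvability by invertible matrices is exactly what Theorem~\ref{thm:tilde rho} asserts for the improvement of $\rho_6$ with $\CI=\CA_6/{\sim}$. (Equivalently, once one performs the computation the text omits, that $\tilde\rho_6$ is that improvement, Theorem~\ref{thm:tilde rho} supplies this and more at once.) For $P_6\subseteq\ker\tilde\rho_6$: the diagonal matrix $\tilde\rho_6(a)=\diag(\rho_6(a),I)$ has all its entries equal to sixth roots of unity, so $\tilde\rho_6(a)^6=I$; being oriented characteristic, for each primitive $g$ there are $\psi\in\Aut(F_2)$ with $\psi(a)=g$ and $\Psi\in\Aut(\GL(12,\C))$ with $\tilde\rho_6(g)=\Psi(\tilde\rho_6(a))$, whence $\tilde\rho_6(g)^6=\Psi(I)=I$; since $\ker\tilde\rho_6$ is normal it therefore contains $P_6$, the normal closure of the elements $g^6$.

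For the extension, projection onto the upper-left $9\times9$ block is a homomorphism on block upper-triangular matrices $\left(\begin{smallmatrix}\ast&\ast\\0&I\end{smallmatrix}\right)$ and carries $\tilde\rho_6$ to $\rho_6$; restricting it to $\tilde\rho_6(F_2)$ gives a surjection onto $\rho_6(F_2)$ with kernel $\tilde\rho_6(\ker\rho_6)$, consisting of matrices $\left(\begin{smallmatrix}I&Q\\0&I\end{smallmatrix}\right)$ with $Q\in\Mat_{9\times3}(\Z[\omega])$. This kernel is finitely generated (as $\ker\rho_6$ has finite index in $F_2$) and torsion free, hence free abelian of some rank $d$, yielding $1\to\Z^{d}\to F_2/\ker\tilde\rho_6\to C_2\times C_2\times H(\Z/3\Z)\to1$. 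To see $d=18$: as an $F_2$-module via $\rho_6$ one has $\C^9=V_1\oplus V_2\oplus V_3$, where $V_j$ is the $j$-th diagonal $3\times3$ block of $\rho_6$, namely $\chi_j\otimes\rho_3$, which is irreducible by Corollary~\ref{cor:irreducible}, and the three $V_j$ are pairwise non-isomorphic; inspecting $\tilde\rho_6$ shows the $j$-th column of $Q(g)$ lies in $V_j$ for every $g$, so $\tilde\rho_6(\ker\rho_6)$ embeds into $V_1\oplus V_2\oplus V_3$, a $\Q$-vector space of dimension $18$, giving $d\le18$. For the reverse inequality one uses that $\ker\rho_6$ is free of rank $109$ with $(\ker\rho_6)^{\mathrm{ab}}\otimes\Q\cong\Q[\rho_6(F_2)]\oplus\Q$ (the relation module of a two-generator presentation), that restriction of cocycles to $\ker\rho_6$ is $\rho_6(F_2)$-equivariant, and that the three columns of $B$ are nonzero and lie in the distinct irreducible blocks $V_1,V_2,V_3$ (so the corresponding cocycles are not coboundaries, by Proposition~\ref{prop:conjugation}); these facts force the induced equivariant maps $(\ker\rho_6)^{\mathrm{ab}}\to V_j$ to be nonzero, so the $\rho_6(F_2)$-submodule of $V_1\oplus V_2\oplus V_3$ they jointly generate surjects onto each factor, and being a submodule of a multiplicity-free semisimple module it equals $V_1\oplus V_2\oplus V_3$. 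Hence $d=18$, and in particular $d\ge1$, so $F_2/\ker\tilde\rho_6$ is infinite and $\ker\tilde\rho_6$ has infinite index. (A direct Reidemeister--Schreier computation of a free basis of $\ker\rho_6$ followed by linear algebra over $\Q(\omega)$ yields the same value.)

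The one substantial step is the direct verification that $\tilde\rho_6$ satisfies Proposition~\ref{prop:characteristic criterion}, equivalently that it is the improvement of $\rho_6$: solving for the correction blocks $C_i$, $R_i$ and checking the three $12\times12$ matrix identities is a finite but bulky linear-algebra task over $\Q(\omega)$. Everything else is either formal (the $P_6$-containment and the extraction of the extension) or a short module-theoretic argument ($d=18$); and a reader willing to grant the omitted improvement-algorithm computation obtains the oriented-characteristic property, $P_6\subseteq\ker\tilde\rho_6$, and the existence of the $\Z^d$-extension directly from Theorem~\ref{thm:tilde rho}, leaving only the Goursat identification of the quotient and the value $d=18$.
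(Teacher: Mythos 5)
Your proposal is correct in outline and reaches all four claims, but it is worth separating what coincides with the paper from what does not. The formal skeleton is the same: verify Proposition~\ref{prop:characteristic criterion} for $\tilde\rho_6$, deduce $P_6\subset\ker\tilde\rho_6$ from $\tilde\rho_6(a)^6=I$ plus the characteristic property, and read off the extension from the block-triangular shape, with the quotient $F_2/\ker\rho_6\cong C_2\times C_2\times H(\Z/3\Z)$ (your Goursat/coprime-order argument makes explicit an identification the paper takes from the known presentations of $\rho_2(F_2)$ and $\rho_3(F_2)$). One caveat on the characteristic step: the paper discharges it by exhibiting explicit $12\times12$ matrices $M_1,M_2$ (with $M_-=I$), whereas you only set up the block-triangular ansatz and assert solvability; your appeal to Theorem~\ref{thm:tilde rho} is circular until the omitted improvement computation is done (as you acknowledge), so a complete write-up must still produce or verify the certificate matrices --- exactly the computation the paper records. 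The genuine divergence is the rank $d=18$: the paper reduces the abelian image $\tilde\rho_6(\ker\rho_6)$ to at most $216$ conjugates of two commutators (using $a^6,b^6,[a,b]^3\in\ker\tilde\rho_6$ and the presentation of $C_2\times C_2\times H(\Z/3\Z)$) and computes the rank by machine, while you argue conceptually: the columns of $Q(g)$ are confined to the three $3\times3$ diagonal blocks of $\rho_6$ (character twists of $\rho_3$, pairwise non-isomorphic irreducibles), the restricted cocycles are $\rho_6(F_2)$-equivariant and nonzero on $\ker\rho_6$, so each image is a full block over $\Q$, and multiplicity-freeness gives the whole rank-$18$ lattice, with $d\le18$ from the column constraint. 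This buys a computer-free, more illuminating computation that would transfer to other improvements, at the cost of two asserted facts you should spell out: pairwise non-isomorphism of the twisted blocks (a one-line character check, e.g.\ at $[a,b]$ and at $a^3,b^3$), and that each column of $B$ is genuinely not a coboundary --- for the third block $\rho_6(a)$ does have eigenvalue $1$, so this is not automatic from the column being nonzero. (The Gasch\"utz description of $(\ker\rho_6)^{\mathrm{ab}}\otimes\Q$ you invoke is not actually needed for the argument.)
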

\begin{proof}
To see the representation is oriented characteristic, observe that the criterion of Proposition \ref{prop:characteristic criterion} is satisfied with the choices of matrices $M_-=I$ and $M_1$ and $M_2$ as below:
$$M_1=\left(\begin{array}{rrrrrrrrr|rrr}
0 & 0 & 0 & 0 & 0 & 0 & 1 & 1 & 1 & 0 & 0 & -\frac{1}{2} \\
0 & 0 & 0 & 0 & 0 & 0 & 1 & \omega & \omega^2 & 0 & 0 & 1 \\
0 & 0 & 0 & 0 & 0 & 0 & 1 & \omega^2 & \omega & 0 & 0 & 1 \\
0 & 0 & 0 & 1 & 1 & 1 & 0 & 0 & 0 & 0 & 0 & 0 \\
0 & 0 & 0 & 1 & \omega & \omega^2 & 0 & 0 & 0 & 0 & -2 \omega - 1 & 0 \\
0 & 0 & 0 & 1 & \omega^2 & \omega & 0 & 0 & 0 & 0 & 2 \omega + 1 & 0 \\
1 & 1 & 1 & 0 & 0 & 0 & 0 & 0 & 0 & 1 & 0 & 0 \\
1 & \omega & \omega^2 & 0 & 0 & 0 & 0 & 0 & 0 & -1 & 0 & 0 \\
1 & \omega^2 & \omega & 0 & 0 & 0 & 0 & 0 & 0 & -1 & 0 & 0 \\
\hline
0 & 0 & 0 & 0 & 0 & 0 & 0 & 0 & 0 & 0 & 0 & -\frac{3}{2} \\
0 & 0 & 0 & 0 & 0 & 0 & 0 & 0 & 0 & 0 & -2 \omega - 1 & 0 \\
0 & 0 & 0 & 0 & 0 & 0 & 0 & 0 & 0 & -2 & 0 & 0
\end{array}\right).$$
$$M_2=\left(\begin{array}{rrrrrrrrr|rrr}
0 & 0 & 0 & 1 & 0 & 0 & 0 & 0 & 0 & 0 & 0 & 0 \\
0 & 0 & 0 & 0 & 1 & 0 & 0 & 0 & 0 & 0 & 0 & 0 \\
0 & 0 & 0 & 0 & 0 & \omega^2 & 0 & 0 & 0 & 0 & 0 & 0 \\
1 & 0 & 0 & 0 & 0 & 0 & 0 & 0 & 0 & 0 & 0 & 0 \\
0 & 1 & 0 & 0 & 0 & 0 & 0 & 0 & 0 & 0 & 0 & 0 \\
0 & 0 & \omega^2 & 0 & 0 & 0 & 0 & 0 & 0 & 0 & 0 & 0 \\
0 & 0 & 0 & 0 & 0 & 0 & 1 & 0 & 0 & 0 & 0 & 0 \\
0 & 0 & 0 & 0 & 0 & 0 & 0 & 1 & 0 & 0 & 0 & 0 \\
0 & 0 & 0 & 0 & 0 & 0 & 0 & 0 & \omega^2 & 0 & 0 & 0 \\
\hline
0 & 0 & 0 & 0 & 0 & 0 & 0 & 0 & 0 & 0 & -1 & 0 \\
0 & 0 & 0 & 0 & 0 & 0 & 0 & 0 & 0 & -1 & 0 & 0 \\
0 & 0 & 0 & 0 & 0 & 0 & 0 & 0 & 0 & 0 & 0 & \omega^2
\end{array}\right).$$
Thus $\ker \tilde \rho_6$ is a characteristic subgroup of $F_2$ by Proposition \ref{prop:characteristic kernel}. Observe that $\tilde \rho_6(a^6)=I$, and thus $P_6 \subset \ker \tilde \rho_6$.

The top left $9 \times 9$ block is isomorphic to the representation $\rho_6=\rho_2 \otimes \rho_3$,
where these representations were taken from \S \ref{sect:finite image}. Thus, the image $\rho_6(F_2)$ is isomorphic to $C_2 \times C_2 \times H(\Z/3\Z)$, and therefore this map induces the surjective map $F_2/\ker \tilde \rho_6 \to C_2 \times C_2 \times H(\Z/3\Z)$ in the short exact sequence. The kernel of this map is isomorphic to the image $\tilde \rho_6(\ker \rho_6)$. Thus we get our exact sequence as described but with $\Z^{18}$ replaced by $\tilde \rho_6(\ker \rho_6)$. The group $\tilde \rho_6(\ker \rho_6)$ is an abelian group because matrices in $\tilde \rho_6(\ker \rho_6)$ have a $2 \times 2$ block form with copies of the identity matrix along the diagonal and a zero matrix in the lower left block. In this subgroup, multiplication is the same as addition in the top right block, and thus $\tilde \rho_6(\ker \rho_6)$ is naturally a subgroup of $\Z[\omega]^{27}$ since $\tilde \rho_6$ takes values in $\GL\big(12,\Z[\omega]\big)$. This shows that $\tilde \rho_6(\ker \rho_6)$ is a finite rank free abelian group. To figure out this rank, we observe using \cite{GAP4} that 
$C_2 \times C_2 \times H(\Z/3\Z)$
can be written as a quotient of the rank two free group $\left< a, b \right>$ by 
$$\ker \rho_6=\big\langle \big\langle a^6, b^6, [a,b]^3, \big[a,[a,b]\big], \big[b,[a,b]\big]\big\rangle\big\rangle.$$
We already know that $a^6, b^6 \in \ker \tilde \rho_6$ and can check that $[a,b]^3 \in \ker \tilde \rho_6$. Thus the abelian image $\tilde \rho_6(\ker \rho_6)$
is generated by elements of the form 
\begin{equation}
    \label{eq:generators 6}
\tilde \rho_6\big(g  \big[a,[a,b]\big] g^{-1}\big) \quad \text{and} \quad \tilde \rho_6\big(g  \big[b,[a,b]\big] g^{-1}\big)    
\end{equation}
with $g \in F_2$. Note that if $\rho_6(g_1)=\rho_6(g_2)$, then $g_1 g_2^{-1} \in \ker \rho_6$, and since $\tilde \rho_6(\ker \rho_6)$ is abelian, we have
$$\begin{array}{rcl}
\tilde \rho_6\big(g_2 \big[a,[a,b]\big] g_2^{-1}\big) & = &
\tilde \rho_6(g_1 g_2^{-1})
\tilde \rho_6(g_2 \big[a,[a,b]\big] g_2^{-1}\big) \tilde \rho_6(g_1 g_2^{-1})^{-1} \\
& = & 
\rho_6\big(g_1 \big[a,[a,b]\big] g_1^{-1}\big).
\end{array}$$
Similarly, for such $g_1$ and $g_2$, we have 
$$\tilde \rho_6\big(g_1 \big[b,[a,b]\big] g_1^{-1}\big)=\tilde \rho_6\big(g_2 \big[b,[a,b]\big] g_2^{-1}\big).$$
Since $\rho_6(F_2) \cong C_2 \times C_2 \times H(\Z/3\Z)$, to generate $\tilde \rho_6(\ker \rho_6)$ it suffices to take elements from \eqref{eq:generators 6} with one $g$ taken from each preimage $\rho_6^{-1}(M)$ where $M$ varies over elements of $\rho_6(F_2)$. Since $\rho_6(F_2) \cong C_2 \times C_2 \times H(\Z/3\Z)$, this amounts to a list of $108$ pairs of generators. This reduces the computation
of the rank of $\tilde \rho_6(\ker \rho_6)$ to a finite computation which can be done on the computer. Using SageMath \cite{sagemath}, we computed $\rank \tilde \rho_6(\ker \rho_6)=18$ so $\tilde \rho_6(\ker \rho_6) \cong \Z^{18}$.
\end{proof}

\subsection{Odd \texorpdfstring{$k \geq 5$}{k greater than or equal to 5}}

Let $k \geq 5$ be odd and define $\rho_k:F_2 \to \GL(k,\C)$ as in \eqref{eq:rho_k}.
We define $\omega=e^{\frac{2\pi i}{k}}$.

We will define an extension $\tilde \rho_k:F_2 \to \GL(k+\frac{k-3}{2},\C)$, which we found by applying the method of Theorem \ref{thm:tilde rho}. (In the next proof, we will show that $\tilde \rho_k$ arises from $\rho_k$ by this method.) Let $B$ denote the $k \times \frac{k-3}{2}$ matrix whose column vectors are given by 
\begin{equation}
\label{eq:b_j}
\vb_j=\e_{j+1}-\e_{k-j} \quad \text{for integers  $j$ with $1 \leq j \leq  \frac{k-3}{2}$},
\end{equation}
where $\e_i$ denotes the standard basis vector with a $1$ in position $i$. 
We define $\tilde \rho_k$ in block form by
\begin{equation}
    \label{eq:tilde rho k}
\tilde \rho_k(a)=\left(\begin{array}{rr}
\rho_k(a) & 0 \\
0 & I
\end{array}\right) \quad \text{and} \quad 
\tilde \rho_k(b)=\left(\begin{array}{rr}
\rho_k(b) & B \\
0 & I
\end{array}\right).
\end{equation}

\begin{thm}
\label{thm:k odd}
For each odd $k \geq 5$ the homomorphism $\tilde \rho_k$ is an oriented characteristic representation. 
The kernel $\ker \tilde \rho_k$ contains $P_k$ and is infinite index in $F_2$. Furthermore, there is a short exact sequence of groups of the form
$$1 \to \Z^{d} \to F_2/\ker \tilde \rho_k \to H(\Z/k\Z) \to 1$$
where $d=k \cdot \frac{k-3}{2} \cdot [\Q(\omega):\Q]$.
\end{thm}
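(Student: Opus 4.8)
The plan is to exhibit $\tilde\rho_k$ as an output of the improvement construction of Theorem \ref{thm:tilde rho} applied to the oriented characteristic representation $\rho_k$ of Proposition \ref{prop:rho odd}. Two preliminary facts set this up: first, $P_k \subseteq \ker\rho_k$, because for odd $k$ the group $H(\Z/k\Z)$ has exponent $k$ and hence $\rho_k(g^k) = I$ for every $g \in F_2$; second, by the coordinate isomorphism of Proposition \ref{prop:coordinates} there are affable representations $\hat\rho_1,\dots,\hat\rho_m$, $m = \tfrac{k-3}{2}$, uniquely determined by $\eval_a(\hat\rho_j) = \0$ and $\eval_b(\hat\rho_j) = \vb_j$, and with these choices the block matrix of \eqref{eq:tilde rho} built from $\rho_k$ and the $\hat\rho_j$ is precisely $\tilde\rho_k$ (both are homomorphisms on the free group $F_2$ whose top-right block is the cocycle with values $0$ on $a$ and $B$ on $b$). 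Since the $\vb_j$ have pairwise disjoint supports avoiding $\{\e_1,\e_k\}$, a quick check shows the classes $[\hat\rho_j]$ are linearly independent in $\CA/\sim$, so they span an $m$-dimensional subspace $\CI$; note also that $\eval_{a^k} \equiv \0$ on $\mathrm{span}\{\hat\rho_1,\dots,\hat\rho_m\}$, trivially, because $\eval_a(\hat\rho_j)=\0$.

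The main step is to prove that $\CI$ is invariant under $N^\sim_{M,\psi}$ for all $(M,\psi) \in \tilde\Delta$. Because inner automorphisms act trivially on $\CA/\sim$ (Proposition \ref{prop:inner automorphisms act trivially}) and $\bar\psi_1,\bar\psi_2,\bar\psi_-$ generate $\Out(F_2)$, it suffices to check invariance under $N^\sim_{M_1,\psi_1}$, $N^\sim_{M_2,\psi_2}$, and $N^\sim_{I,\psi_-}$, where $M_1,M_2$ are the explicit matrices of Proposition \ref{prop:rho odd} and $M_-=I$. Using that the affine part of $N_{M,\psi}(\hat\rho_j)$ at $g$ is $M\cdot\vq_j(\psi^{-1}(g))$ (its complex conjugate when $\psi = \psi_-$), I would compute the coordinates $\eval_a$ and $\eval_b$ of each $N_{M_i,\psi_i}(\hat\rho_j)$ from the cocycle identities, reduce modulo the coboundaries of \eqref{eq:translation vector}, and recognize the result as a linear combination of the $\hat\rho_j$. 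This bookkeeping with $k$-th roots of unity — the cocycle relations, the permutation action of $\rho_k(b)$, and the discrete-Fourier matrix $M_1$ — is where I expect the real work to lie. Once $\CI$ is $N^\sim$-invariant, the fact that $\eval_{a^k}\equiv\0$ on $\mathrm{span}\{\hat\rho_j\}$ upgrades, via the $\Out(F_2)$-equivariance, to $\vq_j(p^k)=\0$ for every primitive $p$ (write $p=\phi(a)$, apply the $\tilde\Delta$-element over $\phi^{-1}$, and use that $\eval_{a^k}$ kills coboundaries); hence each $\hat\rho_j \in \CA_k$ and $\CI \subseteq \CA_k/\sim$, so $\CI$ is an admissible choice in Theorem \ref{thm:tilde rho}. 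That theorem then gives at once that $\tilde\rho_k$ is an oriented characteristic representation with $P_k \subseteq \ker\tilde\rho_k$, together with a short exact sequence $1 \to \Z^d \to F_2/\ker\tilde\rho_k \to F_2/\ker\rho_k \to 1$ with $d = \rank\tilde\rho_k(\ker\rho_k)$, and $F_2/\ker\rho_k \cong H(\Z/k\Z)$ by Proposition \ref{prop:rho odd}.

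It remains to identify $d$. Writing $\tilde\rho_k(g)$ in the block form of \eqref{eq:tilde rho} with top-right block $Q(g)$, the normal subgroup $\tilde\rho_k(\ker\rho_k)$ is identified with the additive group $Q(\ker\rho_k)$, which lies in the lattice of $k\times m$ matrices over $\Z[\omega]$. Now $\ker\rho_k$ is the normal closure of $a^k,b^k,[a,[a,b]],[b,[a,b]]$ (for odd $k$ the relator $[a,b]^k$ is redundant once $[a,b]$ is made central), and $Q$ satisfies $Q(a^k)=0$, $Q(b^k) = \bigl(\sum_{i=0}^{k-1}\rho_k(b)^i\bigr)B = 0$ (the all-ones matrix times $B$, whose columns have vanishing sum), and $Q(ghg^{-1}) = \rho_k(g)Q(h)$ for $h\in\ker\rho_k$; hence $Q(\ker\rho_k)$ contains the additive span of $\{\rho_k(g)C : g\in F_2\}$ with $C = Q([a,[a,b]])$. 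By Proposition \ref{prop:matrix group} the additive span of $\rho_k(F_2)$ has finite index in the full $k\times k$ matrix lattice, so tensoring with $\Q$ gives $Q(\ker\rho_k)\otimes\Q \supseteq M_{k,k}(\Q(\omega))\,C$. A direct cocycle computation yields $C = \omega\,(A^{-1}-A^{-2})\,\rho_k(b)^{-1}(A-I)\,B$ with $A=\rho_k(a)$, and inspecting its columns (the leading nonzero entry of column $j$ sits in row $j+2$, and these rows are distinct and avoid the supports of the remaining columns) shows $C$ has full column rank $m$, whence $M_{k,k}(\Q(\omega))\,C = M_{k,m}(\Q(\omega))$. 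Combined with $Q(\ker\rho_k)\subseteq M_{k,m}(\Z[\omega])$ this gives $Q(\ker\rho_k)\otimes\Q = M_{k,m}(\Q(\omega))$, so $d = \dim_\Q M_{k,m}(\Q(\omega)) = k\cdot\tfrac{k-3}{2}\cdot[\Q(\omega):\Q]$. Since $d>0$ for $k\geq 5$, $\ker\tilde\rho_k$ has infinite index, which finishes the proof. The only genuinely delicate point is the invariance of $\CI$ in the second paragraph; the computation of $d$ and the remaining verifications are routine.
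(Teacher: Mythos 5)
Your overall scaffolding matches the paper's: realize $\tilde\rho_k$ as the output of Theorem \ref{thm:tilde rho} applied to $\rho_k$ with $\CI$ spanned by the classes of $\hat\rho_j=R(\0,\vb_j)$, verify $\CI\subset\CA_k/\sim$ and its invariance under $N^\sim_{M,\psi}$ for $(M,\psi)\in\tilde\Delta$, and then compute $d$ by exhibiting an element of $\ker\rho_k$ whose top-right block has full column rank and invoking Proposition \ref{prop:matrix group}. Your rank computation (via the presentation of $H(\Z/k\Z)$ and the element $[a,[a,b]]$, rather than the paper's $[[a^{-1},b^{-1}],b^{-1}]$) is a legitimate minor variant and is essentially sound.

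The genuine gap is that the heart of the theorem --- the claim that $\CI$ is invariant under $N^\sim_{M,\psi}$ for \emph{all} $(M,\psi)\in\tilde\Delta$ --- is never proved; you explicitly defer it (``I would compute\dots this is where I expect the real work to lie''). The check for $\psi_2$ and $\psi_-$ is indeed easy (one finds $N(M_2,\psi_2)(\hat\rho_j)$ is a scalar multiple of $\hat\rho_j$ and $N(I,\psi_-)$ fixes it), but for $\psi_1$ the image $N(M_1,\psi_1)(\hat\rho_j)=R(-M_1\rho_k(b^{-1})\vb_j,\0)$ lands far outside the slice containing $\CI_L$, and showing that its reduction modulo the coboundaries of \eqref{eq:CT} lies back in $\mathrm{span}\{\hat\rho_{j'}\}$ is exactly the nontrivial content; asserting that the bookkeeping will work out is not a proof. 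There is also a smaller unaddressed point in your reduction to the three Nielsen generators: to conclude invariance for every $(M,\psi)\in\tilde\Delta$ from invariance over generators of $\Out(F_2)$ you must control the pairs $(M,\mathrm{id})$, i.e., know that $N^\sim_{M,\psi}$ depends only on (the outer class of) $\psi$; this uses irreducibility of $\rho_k$ (Corollary \ref{cor:irreducible}) to force such $M$ to be scalar. The paper closes both issues at once by a structural argument you did not find: $N^\sim$ factors through $\Out(F_2)\cong\GL(2,\Z)$, so $N^\sim(M_1,\psi_1)^2$ (lying over the central element $-I$) is central in the image; a single explicit Fourier-type computation then diagonalizes this one operator on the slice $\CS$ and identifies $\CI$ as precisely its $k$-eigenspace, whence $\CI$ is invariant under the entire image of $N^\sim$ without any case-by-case verification. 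To complete your route you would either have to carry out the $\psi_1$ computation in full (and justify the reduction to generators via the centralizer argument), or adopt the paper's central-element/eigenspace mechanism.
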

\begin{proof}
Fix an odd $k \geq 5$. To simplify notation, we will use $\rho$ to denote $\rho_k$ and use $\tilde \rho$ to denote $\tilde \rho_k$ as defined in \eqref{eq:tilde rho k}. In this proof, we will show that $\tilde \rho$ is derivable from $\rho$ as described by Theorem \ref{thm:tilde rho}. The theorem then implies that $\tilde \rho$ is an oriented characteristic representation and that $P_k \subset \ker \tilde \rho$. 

Verifying that the Theorem applies requires working through \S \ref{sect:improving}. We will begin by setting up notation and applying some results from \S \ref{sect:improving} to our setting. We will then define a subspace $\CI_L$ of the space $\CA$ of affable representations. 
We will check that $\CI_L$ is contained in $\CA_k$
and that its quotient in $\CA_k/\sim$ is $N^\sim_{M,\psi}$-invariant. Then we will observe that $\tilde \rho$ as defined above coincides with the definition in \eqref{eq:tilde rho} used in Theorem \ref{thm:tilde rho} with an appropriate choice of basis. Finally we must check that the group $\tilde \rho(\ker \rho)$ is isomorphic to $\Z^d$ with $d$ as in the statement of the theorem.

Recall that $\eval_a \times \eval_b$ gives an isomorphism $\CA \to \C^{k} \times \C^k$; see Proposition \ref{prop:coordinates}. We'll find it useful to use coordinates provided by the inverse map
$$R=(\eval_a \times \eval_b)^{-1}: \C^k \times \C^k \to \CA.$$
The image of $(\va,\vb)$ is defined as in \eqref{eq:a,b}.

The subgroup $\C^k \subset \Aff(k)$ acts on $\CA$ by conjugation. For $\vv \in \C^k$ we used $\Conj_\vv$ to denote this action, and wrote $\hat \rho_1 \sim \hat \rho_2$ if there is a $\vv$ such that $\Conj_\vv(\hat \rho_1)=\hat \rho_2$. The space $\CA/\sim$ is a vector space.
By Proposition \ref{prop:conjugation}, we know that $\Conj_\vv$ acts by translation on $\CA$ and this translation vector depends linearly on $\vv$. Thus the natural map $C:\CA \to \CA/\sim$ is linear and the kernel $\CT=\ker C$ is the collection of translation vectors. By applying the formula in Proposition \ref{prop:conjugation} to the standard basis vectors $\e_1, \ldots, \e_k \in \C^k$ and our particular $\rho$, we see
\begin{equation}
\label{eq:CT}
\CT=\mathrm{span}_\C \Big(\big\{R(\0,\e_1-\e_k)\big\}\cup \big\{R\big((1-\omega^{j-1})\e_j,\e_j-\e_{j-1}\big):~2 \leq j \leq k\big\}\Big).
\end{equation}
In particular, for each $\hat \rho \in \CA$ there is a unique $\vv \in \C^k$ satisfying 
\begin{equation}
\label{eq:conjugacy canonical form}
\Conj_\vv(\hat \rho)=R(c_1 \e_1, c_2 \e_2+c_3 \e_3+ \ldots+c_k \e_k) \quad \text{for some choice of $c_1, \ldots, c_k \in \C$.}
\end{equation}
This gives a standard representative for each conjugacy class. Let $\CS \subset \CA$ denote 
those representations which can be written in the form on the right side of \eqref{eq:conjugacy canonical form}. Then $\CS$ is a section for $C$ in the sense that the restriction $C|_{\CS}:\CS \to \CA/\sim$ is an isomorphism of vector spaces, and there is a linear map $P:\CA \to \CS$ with kernel $\CT$ 
(defined by $S=C|_{\CS}^{-1} \circ C$) which stabilizes points in $\CS$. That is, $P$ is the projection to $\CS$ with leaves parallel to $\CT.$

Now consider the subspace $\CA_k \subset \CA$ consisting of those $\hat \rho \in \CA$ such that $P_k \subset \ker \hat \rho$, as originally defined in \eqref{eq:k-affable}. Define
\begin{equation}
\label{eq:CI_L}
\CI_L=\mathrm{span}_\C \left\{R(\0,\vb_j):~ 1 \leq j \leq \frac{k-3}{2}\right\},
\end{equation}
where the vectors $\vb_j$ are defined as in \eqref{eq:b_j}.
Note $\CI_L \subset \CS$. Define $\CI=C(\CI_L)$. Then $\CI=\CI_L/\sim$ is a subspace of $\CA/\sim$.
We make the following claims:
\begin{enumerate}[label=Claim \arabic*., ref=Claim \arabic*, leftmargin=7em]
\item $\CI$ is $N_{M,\psi}^\sim$-invariant for all $(M,\psi) \in \Delta$.
\label{claim1}
\item $\CI \subset \CA_k$.
\label{claim2}
\end{enumerate}
This will verify the hypotheses of Theorem \ref{thm:tilde rho} providing a new oriented characteristic representation $\tilde \rho$ with $P_k \subset \ker \tilde \rho$. Let 
\begin{equation}
\label{eq:basis}    
\hat \rho_j=R(\0,\vb_j)
\quad \text{for} \quad 
j \in \{1, \ldots, \frac{k-3}{2}\}.
\end{equation}
We obtain the matrix representation for $\tilde \rho$ using \eqref{eq:tilde rho}.

We will see that $\CI$ has algebraic significance which explains the invariance in \ref{claim1}. 

First consider the kernel of the natural projection $\pi_2: \Delta \to \Aut(F_2)$. This subgroup consists of those pairs $(M,\text{id})$ such that $M$ commutes with every $\rho(g)$. Since $\rho$ is irreducible (Corollary \ref{cor:irreducible}) Schur's Lemma implies that only the center of $\GL(k,\C)$ commutes with all of $\rho(F_2)$. Thus, 
\begin{equation}
\label{eq:ker pi_2}
\ker \pi_2 = \{(zI, \text{id}):~ z \in \C \smallsetminus \{0\}\}.
\end{equation}

Let $\pi_2'$ denote the natural map $\Delta \to \Out(F_2)$. If $(M,\psi) \in \ker \pi_2'$, then there is an $h$ so that $\psi(g)=hgh^{-1}$ for all $g \in F_2$. Fix this $\psi$ for this discussion. Observe that one $M$ which satisfies $(M,\psi) \in \Delta$ is $\rho(h)$ (see Proposition \ref{prop:inner automorphisms act trivially}).
The other solutions differ by multiplication by an element of $\ker \pi_2$, so we have $(M,\psi) \in \Delta$ if and only if $M=z \rho(h)$ for some $z \in \C \smallsetminus \{0\}$. Then by recalling Proposition \ref{prop:inner automorphisms act trivially}, we conclude that 
for any $(M,\psi) \in \Delta$ with $\psi$ an inner automorphism, there is a $z \in \C \smallsetminus \{0\}$
such that 
\begin{equation}
\label{eq:N and inner automorphisms}
N^\sim_{M,\psi}([\hat \rho])=z [\hat \rho] \quad \text{for all $[\hat \rho] \in \CA/\sim$.}
\end{equation}

\begin{comment}
\ref{claim1} is a consequence of the group theoretic structure of $N^\sim:\Delta \to \GL(\CA/\sim)$. Namely,
that the image $N^\sim(\Delta)$ is a quotient of $\GL(2,\Z)$. To see this first observe that $N(M,\psi)$ only depends on $\psi$. This follows from the fact that $N(M,\psi)$ is trivial whenever $\psi$ is the identity automorphism. From \eqref{eq:Delta} $(M,\psi) \in \Delta$ with $\psi$ the identity if and only if $M$ commutes with each $\rho(g)$. Since $\rho$ is irreducible (Corollary \ref{cor:irreducible}) only the center of $\GL(k,\C)$ commutes with all of $\rho(F_2)$. But by definition of $N$ in Lemma \ref{lem:N},
we see that $N(M,\psi)=N(zM,\psi)$ for all $(M,\psi) \in \Delta$ and all $z \in \C$. 
We have shown that $N(M,\psi)$ only depends on $\psi$, so it follows that $N^\sim(M,\psi)$ only depends on $\psi$. By Proposition \ref{prop:inner automorphisms act trivially}, it then follows that $N^\sim(M,\psi)$ is trivial whenever $\psi$ is an inner automorphism of $F_2$. Thus the image $N^\sim(M,\psi)$ only depends on the outer automorphism class of $\psi$, i.e., $N^\sim$ factors through $\Out(F_2) \cong \GL(2,\Z)$ as claimed.
\end{comment}

Let $\psi_1 \in \Aut(F_2)$ be as in \eqref{eq:generators for Out}. Let $M_1 \in \GL(k,\C)$ be the matrix in Proposition \ref{prop:rho odd}. Then we have $(M_1,\psi_1) \in \Delta$. We claim that every element of $N^\sim_\Delta$ preserves the eigenspaces of $N^\sim(M_1,\psi_1)^2$. (In this proof, we will use $N^\sim(M,\psi)$ to denote $N^\sim_{M,\psi}$
and $N(M,\psi)$ to denote $N_{M,\psi}$ to avoid double subscripts.)
This has to do with the fact that 
the outer automorphism class of $\psi_1^2$ represents $-I$ in the identification of
$\Out(F_2)$ with $\GL(2,\Z)$, and thus lies in the center of $\Out(F_2)$.
To understand these eigenspaces are invariant, 
first recall that $\psi_1^4$ is the trivial automorphism of $F_2$. Thus, by \eqref{eq:ker pi_2}, $M_1^4$ is a non-zero scalar multiple of the identity. Note that $N^\sim(M_1, \psi_1)^4$ also scales by the same amount. It follows (say by considering Jordan canonical form) that
$N^\sim(M_1, \psi_1)^2$ is diagonalizable and has eigenvalues in the set $\{\pm z_1\}$ for some $z_1 \in \C \smallsetminus \{0\}$. 
Let $d_+$ and $d_-$ denote the dimensions of eigenspaces with eigenvalue $z_1$ and $-z_1$, respectively.
Since $\dim (\CA/\sim)=k$ (which follows from \eqref{eq:conjugacy canonical form}), we have $d_+ + d_-=k$.
Since $k$ is odd it follows that $d_+ \neq d_-$.
To verify that these eigenspaces are preserved, pick any $(M,\psi) \in \Delta$. Since the image of $\psi_1^2$ in $\Out(F_2)$ is central, we know that the commutator $[\psi^{-1}, \psi_1^{-2}]$ is an inner automorphism. Thus, by \eqref{eq:N and inner automorphisms}, $N^\sim([\psi^{-1}, \psi_1^{-2}],[M^{-1},M_1^{-2}])$ scales elements of $\CA/\sim$ by some $z \in \C \smallsetminus \{0\}$, and by simplifying we get
\begin{equation}
    \label{eq:central}
N^\sim(M,\psi) \circ N^\sim(M_1,\psi_1)^2 \circ N^\sim(M,\psi)^{-1} = z N^\sim(M_1^2,\psi_1^2).
\end{equation}
Observe that the left side above is conjugate to $N^\sim(M_1,\psi_1)^2$ and so has eigenspaces of dimension $d_\pm$ with corresponding eigenvalues of $\pm z_1$. The right hand side however has eigenvalues of dimension $d_\pm$ with corresponding eigenvalues of $\pm z z_1$. It follows that $z=1$, and then \eqref{eq:central} gives centrality of $N^\sim(M_1,\psi_1)^2$ in $N^\sim_\Delta$ and this centrality implies that the eigenspaces of $N^\sim(M_1,\psi_1)^2$ must be preserved by elements of $N^\sim_\Delta$.

We will now find a basis of eigenvectors for $N^\sim(M_1,\psi_1)^2$ to show that $\CI$ is an eigenspace. Observe that $\psi_1^2:F_2 \to F_2$ is the automorphism satisfying 
$$\psi_1^2(a)=a^{-1} \quad \text{and} \quad \psi_1^2(b)=b^{-1}$$
and thus $\psi_1^2$ is an involution. Also the entries of $M_1^2$ are given by 
$$
\arraycolsep=1.4pt
\begin{array}{rcl}
(M_1^2)_{i,j} & = &  \sum_{\ell=1}^k \omega^{(i-1)(\ell-1)} \omega^{(\ell-1)(j-1)} \\
& = & \sum_{\ell=1}^k \omega^{(j+i-2)(\ell-1)} = \begin{cases}
k & \text{if $j+i-2 \equiv 0 \pmod{k}$} \\
0 & \text{otherwise.}
\end{cases}
\end{array}
$$
In particular,
\begin{equation}
\label{eq:M12 by e}
M_1^2 \e_j = k \e_i \quad \text{where $i$ is such that $j+i-2 \equiv 0 \pmod{k}$}.
\end{equation}

Recalling the notation in the paragraph including \eqref{eq:CT} and \eqref{eq:conjugacy canonical form}, we define the map $N_1^2:\CS \to \CS$ by
\begin{equation}
    \label{eq:N12}
N_1^2 = C|_{\CS}^{-1} \circ N^\sim(M_1,\psi_1)^2 \circ C|_{\CS} =
P \circ N(M_1,\psi_1)^2|_{\CS}.
\end{equation}
Equality of these two expressions follows from the facts that 
$$C \circ N(M_1,\psi_1)^2=N^\sim(M_1,\psi_1)^2 \circ C$$ 
(i.e., that $N^\sim(M_1,\psi_1)^2$ is the action on $\CA /\sim$ induced by $N(M_1,\psi_1)^2$) and that $P=C|_{\CS}^{-1} \circ C$ as noted in the paragraph cited above. We will evaluate $N_1^2$ using the rightmost identity in \eqref{eq:N12}, by applying $N(M_1,\psi_1)^2$ followed by the projection $P:\CA \to \CS$ which has fibers parallel to $\CT$. We will show that a list of eigenvalues and eigenvectors of $N_1^2$ is given by:
\begin{enumerate}
\item[(a)] The vectors $\hat \rho_j=R(\0,\vb_j)$ are eigenvectors with eigenvalue $k$ for $j \in \{1, \ldots, \frac{k-3}{2}\}$. 
\item[(b)] The vectors $R(\0,\e_{j+1}+\e_{k-j})$ are eigenvectors with eigenvalue $-k$ for $j \in \{1, \ldots, \frac{k-3}{2}\}$. 
\item[(c)] The vectors $R(\e_1,\0)$, $R(\0,\e_{\frac{k+1}{2}})$ and $R(\0,\e_k)$ are eigenvectors with eigenvalue $-k$. 
\end{enumerate}
The reader will observe that the vectors listed above span $\CS$ and the eigenspace formed by the span of the eigenvectors in case (a) coincides with $\CI_L$. Thus by proving these statements we will have verified \ref{claim1}. 

Before proving (a)-(c) we need to understand the action of $N(M_1,\psi_1)^2$. Let $(\va,\vb) \in \C^k\times \C^k$ and $\hat \rho=R(\va,\vb)$. We have by definition of $N$:
$$N(M_1,\psi_1)^2(\hat \rho)(a) = M_1^2 \cdot \big(-\rho(a^{-1}) \va,\rho(a^{-1})\big) \cdot M_1^{-2}=
\big(-M_1^2 \rho(a^{-1}) \va,M_1^2 \rho(a^{-1})M_1^{-2}\big).$$
Since $(M_1^2, \psi_1^2) \in \Delta$, we know that $M_1^2 \rho(a^{-1}) M_1^{-2}=\rho(a)$ and thus
$$N(M_1,\psi_1)^2(\hat \rho)(a) = 
\big(-\rho(a) M_1^2 \va, \rho(a)\big).$$
Similarly, we have $N(M_1,\psi_1)^2(\hat \rho)(b) = 
\big(-\rho(b) M_1^2 \vb, \rho(a)\big).$ Putting these two together we see that
$$N(M_1,\psi_1)^2 \circ R(\va, \vb) = R \big(-\rho(a) M_1^2 \va, -\rho(b) M_1^2 \vb).$$
We specialize this using our understanding of $M_1^2$ and $\rho(a)$ and $\rho(b)$ into some useful special cases. We have
\begin{equation}
\label{eq:NM12 image of e1}
\arraycolsep=1.4pt
\begin{array}{rcl}
N(M_1,\psi_1)^2 \circ R(\e_1,\0) & = & R \big(-\rho(a) M_1^2 \e_1, \0) = R \big(-k \rho(a) \e_1, \0) \\
& = & R \big(-k \e_1, \0\big).
\end{array}
\end{equation}
For $j>1$, we have
\begin{equation}
\label{eq:NM12 image of ei}
\arraycolsep=1.4pt
\begin{array}{rcl}
N(M_1,\psi_1)^2 \circ R(\0,\e_j) & = & R \big(\0,-\rho(b) M_1^2 \e_j) = R \big(\0, -k \rho(b) \e_{k+2-j}) \\
& = &  R \big(\0, -k \e_{k+1-j}).
\end{array}
\end{equation}

Now we will check (a)-(c). First consider (c). 
That $N_1^2 \circ R(\e_1,\0)=-k R(\e_1,\0)$ follows from \eqref{eq:NM12 image of e1}. Similarly, that $R(\0,\e_{\frac{k+1}{2}})$ has eigenvalue $-k$ follows from \eqref{eq:NM12 image of ei} with $j=\frac{k+1}{2}$. Again by \eqref{eq:NM12 image of ei}, $N_1^2 \circ R(\0,\e_k)$ is the projection of $R \big(\0, -k \e_{1})$ to $\CS$ along $\CT$. Since $R(\0,\e_1-\e_k) \in \CT$, we have 
$N_1^2 \circ R(\0,\e_k) = R(\0,-k\e_k)$ finishing the proof of (c). Now consider (a). Recall that $\vb_j=\e_{j+1}-\e_{k-j}$ for $j=1, \ldots, \frac{k-3}{2}$, and using \eqref{eq:NM12 image of ei}, we observe
$$N(M_1,\psi_1)^2 \circ R(\0,\vb_j) = 
R(\0, -k\e_{k-j}+k\e_{j+1}) = k R(\0, \vb_j),$$
which verifies (a). Finally consider (b). For $j \in \{1, \ldots, \frac{k-3}{2}\}$, we have
$$N(M_1,\psi_1)^2 \circ R(\0,\e_{j+1} + \e_{k-j}) = 
R(\0, -k\e_{k-j}-k\e_{j+1})=-k R(\0,\e_{j+1} + \e_{k-j}).$$
This completes the proof of \ref{claim1}.

Finally we need to prove \ref{claim2} that $\CI \subset \CA_k/\sim$. From the above we know that $\CI$ is $N^\sim$-invariant, so it suffices to prove that $\hat \rho(a^k)=1$ for each $\hat \rho \in \CI_L$. 
We clearly have this since each $\hat \rho=R(\0,\vv)$ for some $\vv \in \C^k$, see \eqref{eq:CI_L}. 
This proves \ref{claim2}.

Since we have proven the claims, we obtain the representation $\tilde \rho$ as discussed surrounding \eqref{eq:basis}.
It remains to show that $\tilde \rho(\ker \rho) \cong \Z^d$ with $d=k \cdot \frac{k-3}{2} \cdot n$ with $n=[\Q(\omega):\Q]$ as stated in the theorem. Observe that $\tilde \rho$ is a representation from $F_2$ into $\GL\big(k+\frac{k-3}{2}, \Z[\omega]\big)$.
For $g \in \ker \rho$, the matrix $\tilde \rho(g)$ has a block form as in \eqref{eq:tilde rho}, with the identity appearing in the diagonal blocks, zero in the bottom left, and a $k \times \frac{k-3}{2}$ matrix $Q(g)$ in the top right.
We will show that the rank $d$ is as large as possible: as large as the rank $k \cdot \frac{k-3}{2} \cdot n$ of the additive group of $k \times \frac{k-3}{2}$ matrices with entries in $\Z[\omega]$. We claim that it suffices to find a $g \in \ker \rho$ such that the top right block $Q(g)$ has linearly independent columns. We will prove this suffices, and then give such a $g$ below. Observe that given any $h \in F_2$, we have $hgh^{-1} \in \ker \rho$ and a computation shows that
\begin{comment}
$$\tilde \rho(hgh^{-1})=\left(\begin{matrix}
\rho(h) & Q(h) \\
0 & I
\end{matrix}\right)
\left(\begin{matrix}
I & Q(g) \\
0 & I
\end{matrix}\right)
\left(\begin{matrix}
\rho(h)^{-1} & -\rho(h)^{-1} Q(h) \\
0 & I
\end{matrix}\right)
=
\left(\begin{matrix}
I & \rho(h) Q(g) \\
0 & I
\end{matrix}\right)
$$
so 
\end{comment}
$Q(hgh^{-1})=\rho(h) Q(g)$. It follows that
$$Q(\ker \rho) \supset \Lambda Q(g),$$
where $\Lambda$ is the additive group of matrices generated by $\rho(F_2)$. Proposition \ref{prop:matrix group} guarantees that the additive group $\mathbf M$ of $k \times k$ matrices with entries in $\Z[\omega]$ contains $\Lambda$ as a finite index subgroup. Thus 
we can find matrices $M_1, \ldots, M_{k^2 n} \in \Lambda$
which generate the space of $k \times k$ matrices with entries in $Q(\omega)$ as a $\Q$-vector space. Define the map $\Phi$ to send a $k \times k$ matrix $M$ with entries in $\Q(\omega)$
to the product $M Q(g)$. Then $\Phi$ is $\Q(\omega)$-linear so we have
$$\dim_{\Q(\omega)} (\ker \Phi) + \dim_{\Q(\omega)} (\img \Phi) = \dim_{\Q(\omega)} \mathbf M = k^2.$$
The kernel of $\Phi$ consists of those $M \in \mathbf M$ so that the rows of $M$ are perpendicular to each column of $Q(g)$. Since the columns of $Q(g)$ are linearly independent, the rows of matrices in $\ker \Phi$ can be taken from a $\Q(\omega)$-linear subspace of codimension $\frac{k-3}{2}$. We conclude $\dim_{\Q(\omega)} (\ker \Phi)=k (k-\frac{k-3}{2})$ and it follows that 
$$\dim_{\Q(\omega)} (\img \Phi) = k \cdot \frac{k-3}{2} \quad \text{and so} \quad \dim_{\Q} (\img \Phi)=k \cdot \frac{k-3}{2} \cdot n.$$ 
The images $\Phi(M_1), \ldots,  \Phi(M_{k^2 n})$ of our $\Q$-basis of matrices span the image of $\Phi$ as a $\Q$-vector space, so we can find $k \cdot \frac{k-3}{2} \cdot n$ such images which are linearly independent over $\Q$. These images freely generate a free abelian group, which lies in $\Lambda Q(g)$ and therefore also in $Q(\ker \rho)$. We conclude that the rank of $Q(\ker \rho)$ is at least $k \cdot \frac{k-3}{2} \cdot n$ as desired.

We carry out this calculation for $g=
\big[a^{-1},[a,b^{-1}]\big]=  
aba^{-1}b^{-1} a^{-1} b a b^{-1} \in \ker(\rho)$.
The columns of $Q(g)$ are given by $\pi_1 \circ \hat \rho_j(g)$ for $j \in \{1, \ldots, \frac{k-3}{2}\}$. It may be computed that
$$\hat \rho_j(aba^{-1}b^{-1})=\big((\omega^j-\omega^{-1})\e_{j+1}+(\omega^{-1}-\omega^{-j-1})\e_{k-j},\omega^{-1} I\big),$$
$$\hat \rho_j(a^{-1} b a b^{-1})=\big((\omega^{-j}-\omega^{1})\e_{j+1}+(\omega^{1}-\omega^{j+1})\e_{k-j},\omega I\big),$$
$$\hat \rho_j(g)=
\big(
(\omega^j-1)(1-\omega^{-j-1})(\e_{j+1}-\e_{k-j}),I\big).$$
(This calculation was done by hand and independently verified using \cite{sagemath} for several values of $k$.)
Observe that the coefficient $(\omega^j-1)(1-\omega^{-j-1})$ is never zero for the range of $j$ under consideration. Also the vectors are linearly independent since the positions of non-zero entries never coincide. Thus
the above argument gives us the rank we claimed.
\end{proof}

\subsection{Case $k=4$}
\label{sect:reprentations 4}
We define $\rho_4:F_2 \to \GL(2,\C)$ by 
$$\rho_4(a)=\left(\begin{array}{rr}
i & 0 \\
0 & -i
\end{array}\right) \quad \text{and} \quad
\rho_4(b)=\left(\begin{array}{rr}
0 & 1 \\
-1 & 0
\end{array}\right).$$

\begin{prop}
\label{prop:rho 4}
The image $\rho_4(F_2)$ is isomorphic to the quaternion group $Q$ of order eight.
The representation $\rho_4$ is oriented characteristic and $P_4 \subset \ker \rho_4$. The additive subgroup of $2 \times 2$ matrices generated by the image of $\rho_4$ consists of those matrices of the form
$$M_{x,y} = \left(\begin{array}{rr}
x & -\overline{y} \\
y & \overline{x}
\end{array}\right)
\quad \text{with $x,y \in \Z[i]$}.
$$
\end{prop}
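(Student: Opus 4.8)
\emph{Strategy.} I would treat the three assertions of Proposition~\ref{prop:rho 4} in turn, each via a short explicit computation with $2\times 2$ matrices (the $M_{x,y}$ displayed in the statement are of course $2\times 2$, matching $\rho_4\colon F_2\to\GL(2,\C)$).

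\emph{Identifying the image.} First I would compute $\rho_4(a)^2$, $\rho_4(b)^2$, and $\rho_4(ab)^2$ and observe that all three equal $-I$ (so in particular $\rho_4(a)^4=I$). Since the quaternion group has presentation $Q=\langle x,y\mid x^2=y^2=(xy)^2\rangle$, this shows $\rho_4$ factors through $Q$, so $\rho_4(F_2)$ is a quotient of $Q$. To see it is all of $Q$, list the eight matrices $\pm I$, $\pm\rho_4(a)$, $\pm\rho_4(b)$, $\pm\rho_4(ab)$ and note they are pairwise distinct; hence $|\rho_4(F_2)|=8$ and $\rho_4(F_2)\cong Q$.

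\emph{Oriented characteristic and $P_4\subset\ker\rho_4$.} Here I would apply the criterion of Proposition~\ref{prop:characteristic criterion} by exhibiting matrices $M_1,M_2,M_-$. Requiring $M_2$ to commute with the diagonal matrix $\rho_4(a)$ forces $M_2$ diagonal, and the second equation then pins it down, up to scalar, to $M_2=\diag(i,1)$; the two equations for $M_1$ form a small linear system with solution (up to scalar) $M_1=\left(\begin{smallmatrix} -i & 1 \\ 1 & -i \end{smallmatrix}\right)$; and since $\rho_4(b)$ is real and $\rho_4(a)\,\overline{\rho_4(a)}=I$, one checks $M_-=I$ works. Each verification is a one-line calculation. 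For the kernel statement, once $\rho_4$ is known to be oriented characteristic, Proposition~\ref{prop:characteristic kernel} gives that $\ker\rho_4$ is characteristic, hence normal; then for any primitive $g=\psi(a)$ with $\psi\in\Aut(F_2)$ the defining property of oriented characteristic representations yields $\rho_4(g)=\Psi(\rho_4(a))$ for some automorphism $\Psi$ of $\GL(2,\C)$, so $\rho_4(g^4)=\Psi(\rho_4(a)^4)=I$, giving $P_4\subset\ker\rho_4$. (Alternatively, using only normality of $\ker\rho_4$, one directly checks that the six normal generators $a^4,b^4,(ab)^4,(ab^{-1})^4,(a^2b)^4,(ab^2)^4$ from the $k=4$ table map to $I$.)

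\emph{The additive span.} Let $\CM=\{M_{x,y}:x,y\in\C\}$. A short computation gives the product rule $M_{x,y}M_{x',y'}=M_{xx'-\overline{y}y',\,yx'+\overline{x}y'}$, so $\CM$ is closed under multiplication (it is the standard $2\times 2$ model of the real quaternion algebra), and one notes $I=M_{1,0}$, $\rho_4(a)=M_{i,0}$, $\rho_4(b)=M_{0,-1}$. Since $\rho_4(F_2)$ is generated by $\rho_4(a)^{\pm1},\rho_4(b)^{\pm1}$, since $\CM$ is multiplicatively closed, and since the relevant inverses $M_{x,y}^{-1}$ (for group elements, where $|x|^2+|y|^2=1$) again lie in $\CM$, we get $\rho_4(F_2)\subset\CM$; as the entries of $\rho_4(a),\rho_4(b)$ lie in $\Z[i]$, in fact $\rho_4(F_2)\subset\{M_{x,y}:x,y\in\Z[i]\}$, hence so is the additive group it generates. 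For the reverse inclusion, $(x,y)\mapsto M_{x,y}$ is additive, so $\{M_{x,y}:x,y\in\Z[i]\}$ is exactly the $\Z$-span of $M_{1,0},M_{i,0},M_{0,1},M_{0,i}$, and these equal $I$, $\rho_4(a)$, $-\rho_4(b)$, $\rho_4(ab)$ respectively — all in the additive group generated by $\rho_4(F_2)$. Hence that additive group is precisely $\{M_{x,y}:x,y\in\Z[i]\}$.

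\emph{Expected difficulty.} There is no conceptual obstacle; it is all $2\times 2$ matrix arithmetic together with invocations of the criterion in Proposition~\ref{prop:characteristic criterion}. The one point deserving care is the last step: it is immediate that the generated additive group lies \emph{inside} the lattice $\{M_{x,y}:x,y\in\Z[i]\}$, and the content of the claim is that it is the \emph{whole} lattice — so I would make a point of producing the explicit $\Z$-basis $\{I,\rho_4(a),-\rho_4(b),\rho_4(ab)\}$ of that lattice among (images of) group elements, rather than arguing indirectly via an index count.
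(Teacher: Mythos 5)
Your proposal is correct and follows essentially the same route as the paper: enumerate the eight-element image to identify $Q$ and the additive lattice, and verify Proposition~\ref{prop:characteristic criterion} with $M_2=\diag(i,1)$, $M_-=I$, and an $M_1$ that is just a scalar multiple ($-i$ times) of the paper's choice $\left(\begin{smallmatrix}1&i\\ i&1\end{smallmatrix}\right)$, which satisfies the same (linear, hence scale-invariant) equations. You actually supply more detail than the paper's terse proof, in particular spelling out why $P_4\subset\ker\rho_4$ and exhibiting the explicit $\Z$-basis $\{I,\rho_4(a),-\rho_4(b),\rho_4(ab)\}$ of the lattice, both of which the paper leaves implicit.
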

\begin{proof}
Let ${\mathbf M}=\{M_{x,y}:~x,y \in \C\}$ and observe that
${\mathbf M}$ is closed under multiplication. Thus $\SL(2,\C) \cap {\mathbf M}$ is a multiplicative group containing $\rho_4(a)$ and $\rho_4(b)$. Furthermore,
$\det M_{x,y}=|x|^2+|y|^2$, so there are exactly eight matrices in $\SL(2,\C) \cap {\mathbf M}$.
%, namely those where
%\begin{equation}
%    \label{eq:det 1 solutions}
%    (x,y) \in \{(\pm 1, 0\), (\pm i, 0), (0, \pm 1), (0, \pm i)\}.
%\end{equation}
Observe by inspection that $\SL(2,\C) \cap {\mathbf M}$ is isomorphic to $Q$ and that $\rho_4(a)$ and $\rho_4(b)$ generate. Also observe that the matrices $M_{1,0}, M_{i,0}, M_{0,1}, M_{0,i} \in \rho_4(F_2)$ generate $\mathbf M$ as an additive group. To see $\rho_4$ is oriented characteristic, observe it satisfies Proposition \ref{prop:characteristic criterion} with the choice of matrices
$$
M_1=\left(\begin{array}{rr}
1 & i \\
i & 1
\end{array}\right), \quad
M_2=\left(\begin{array}{rr}
i & 0 \\
0 & 1
\end{array}\right)
\quad \text{and} \quad 
M_-=I.$$
Also we have $P_4 \subset \ker \rho_4$ since the kernel is characteristic and $\rho_4(a)^4=I$.
\end{proof}

Let $\tilde \rho_4:F_2 \to \GL(4,\C)$ be defined by 
$$\tilde \rho_4(a)=\left(\begin{array}{rr|rr}
i & 0 & 0 & 0 \\
0 & -i & 0 & 0 \\
\hline
0 & 0 & 1 & 0 \\
0 & 0 & 0 & 1
\end{array}\right)
\quad \text{and} \quad
\tilde \rho_4(b)=\left(\begin{array}{rr|rr}
0 & 1 & 1 & 0 \\
-1 & 0 & 0 & 1 \\
\hline
0 & 0 & 1 & 0 \\
0 & 0 & 0 & 1
\end{array}\right).$$
This representation was produced by following the argument of Theorem \ref{thm:tilde rho} with $\CI=\CA_{4}/\sim$, though we will not prove this. We do show:

\begin{prop}
\label{prop:trho4}
The homomorphism $\tilde \rho_4$ is an oriented characteristic representation. The kernel of $\tilde \rho_4$ contains $P_4$ and is infinite index in $F_2$. 
We have
$$\tilde \rho_4(\ker \rho_4)=\left\{\left(\begin{array}{rr|rr}
1 & 0 & z & -\overline{w} \\
0 & 1 & w & \overline{z} \\
\hline
0 & 0 & 1 & 0 \\
0 & 0 & 0 & 1
\end{array}\right):~(w,z) \in \Lambda\right\}$$
where $\Lambda$ is the kernel of the map 
$$\Z[i]^2 \to \Z/2\Z \quad \text{given by} \quad 
(a+ib,c+id) \mapsto a+b+c+d \pmod{2}.$$
%\subset \Z[i]^2$ is the index two subgroup generated by $(-1,1)$, $(-i,-i)$, $(-1,-1)$ and $(0,i+1)$. 
Thus there is a short exact sequence of groups of the form
$$1 \to \Z^{4} \to F_2/\ker \tilde \rho_4 \to Q \to 1.$$
\end{prop}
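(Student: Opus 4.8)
The plan is to take the four assertions in turn, with the determination of $\tilde\rho_4(\ker\rho_4)$ as the main content. First I would show $\tilde\rho_4$ is oriented characteristic by verifying the criterion of Proposition~\ref{prop:characteristic criterion} directly: exhibit $M_1,M_2,M_-\in\GL(4,\C)$ and check the three pairs of relations there with $\rho=\tilde\rho_4$. For each of $M_1,M_2$, and (after absorbing the complex conjugation) for $M_-$, these relations are linear in the entries, so this amounts to solving a finite homogeneous linear system over $\C$ and exhibiting an invertible solution; by analogy with Proposition~\ref{prop:rho 4} one expects $M_1,M_2$ to be block‑triangular extensions of the $2\times2$ matrices there. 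Granting this, $\ker\tilde\rho_4$ is characteristic by Proposition~\ref{prop:characteristic kernel}; and since $a$ is primitive and $\tilde\rho_4(a^4)=I$, characteristicity forces $\phi(a)^4=\phi(a^4)\in\ker\tilde\rho_4$ for all $\phi\in\Aut(F_2)$, hence $P_4\subseteq\ker\tilde\rho_4$, exactly as in the proof of Theorem~\ref{thm:tilde rho}.

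Now write $K=\ker\rho_4$; by Proposition~\ref{prop:rho 4} we have $[F_2:K]=8$ and $F_2/K\cong Q$, and $\ker\tilde\rho_4\subseteq K$ because the upper‑left block of $\tilde\rho_4$ is $\rho_4$. For $g\in K$ we have $\tilde\rho_4(g)=\left(\begin{smallmatrix}I&Q(g)\\0&I\end{smallmatrix}\right)$, where $Q(g)$ is the off‑diagonal block, and the cocycle relation $Q(g_1g_2)=Q(g_1)+\rho_4(g_1)Q(g_2)$ shows $Q$ restricts to an additive homomorphism $K\to\Mat_2(\C)$; in particular $\tilde\rho_4(K)\cong Q(K)$ is abelian. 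Using $Q(a)=Q(a^{-1})=0$, $Q(b)=I$, $Q(b^{-1})=\rho_4(b)$ together with the cocycle relation and the fact that $\rho_4(F_2)$ lies in the subring $\mathcal{O}=\{M_{x,y}:x,y\in\Z[i]\}$ of Proposition~\ref{prop:rho 4}, an induction on word length gives $Q(F_2)\subseteq\mathcal{O}$. Because $Q|_K$ is an additive homomorphism, $Q(K)$ is the $\Z$‑span of $\{Q(gsg^{-1}):g\in F_2\}$ as $s$ runs over a normal generating set of $K$ in $F_2$; and for $s\in K$ one computes $Q(gsg^{-1})=\rho_4(g)Q(s)$, so, since the additive span of $\rho_4(F_2)$ is all of $\mathcal{O}$, we obtain $Q(K)=\sum_s\mathcal{O}\,Q(s)$.

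Using a standard presentation $Q=\langle x,y\mid x^4,\,x^2y^{-2},\,yxy^{-1}x\rangle$ and matching $a\mapsto x$, $b\mapsto y$ via $\rho_4(a)=\diag(i,-i)$, $\rho_4(b)=\left(\begin{smallmatrix}0&1\\-1&0\end{smallmatrix}\right)$, one may take $S=\{a^4,\,a^2b^{-2},\,bab^{-1}a\}$ as a normal generating set for $K$; a short cocycle computation then gives $Q(a^4)=0$, $Q(a^2b^{-2})=M_{-1,1}$, and $Q(bab^{-1}a)=M_{1+i,0}$. It remains to compute the two $\Z$‑submodules $\mathcal{O}M_{-1,1}$ and $\mathcal{O}M_{1+i,0}$ of $\mathcal{O}\cong\Z[i]^2$ and to verify that their sum is exactly the lattice $\Lambda$ in the statement. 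I expect this last identification — pinning down the index‑two congruence cut out by $Q(K)$ and matching it against the four listed generators $(-1,1),(-i,-i),(-1,-1),(0,i+1)$, keeping straight the correspondence $(w,z)\leftrightarrow M_{z,w}$ — to be the step most prone to sign and coordinate errors, and hence the main obstacle. The cleanest route is to exhibit the unique surjective $\Z$‑linear map $\mathcal{O}\to\Z/2$ that vanishes on $Q(K)$, check that it also vanishes on each of the four generators of $\Lambda$ (transported through $(w,z)\leftrightarrow M_{z,w}$), and compare indices; this gives $Q(K)=\{M_{z,w}:(w,z)\in\Lambda\}$, shows $\Lambda$ has index $2$ in $\Z[i]^2$, and hence that $\Lambda$ is free abelian of rank $4$.

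Finally, the block‑triangular form of $\tilde\rho_4(F_2)$ makes projection onto the upper‑left block a group homomorphism $\tilde\rho_4(F_2)\to\rho_4(F_2)$ with kernel $\tilde\rho_4(K)$, so we get an exact sequence $1\to\tilde\rho_4(K)\to\tilde\rho_4(F_2)\to\rho_4(F_2)\to1$ (the same direct argument used in the proof of Theorem~\ref{thm:tilde rho}). Substituting $\tilde\rho_4(F_2)\cong F_2/\ker\tilde\rho_4$, $\rho_4(F_2)\cong Q$ and $\tilde\rho_4(K)\cong\Z^4$ yields the asserted short exact sequence; and since $\Z^4$ is infinite, $F_2/\ker\tilde\rho_4$ is infinite, so $\ker\tilde\rho_4$ has infinite index in $F_2$.
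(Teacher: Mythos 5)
Your proposal is correct and takes essentially the same approach as the paper: verify the criterion of Proposition~\ref{prop:characteristic criterion} with explicit block matrices, deduce $P_4 \subset \ker \tilde \rho_4$ from $\tilde \rho_4(a^4)=I$ and characteristicity of the kernel, and compute $\tilde \rho_4(\ker \rho_4)$ by letting the additive span of $\rho_4(F_2)$ (Proposition~\ref{prop:rho 4}) act on the off-diagonal blocks of the images of the quaternion relators, then identify the resulting index-two lattice $\Lambda$ and read off the short exact sequence from the block-triangular form. The only differences are cosmetic — your relators $a^2b^{-2}$, $bab^{-1}a$ versus the paper's $a^2b^2$, $ab^{-1}ab$, and your explicit cocycle bookkeeping — and your computed values $Q(a^2b^{-2})=M_{-1,1}$, $Q(bab^{-1}a)=M_{1+i,0}$ are consistent with the paper's lattice generators.
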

\begin{proof}
To see $\tilde \rho$ is oriented characteristic apply Proposition \ref{prop:characteristic criterion} with 
$M_-=I$, 
$$M_1=\left(\begin{array}{rr|rr}
2 & 2 i & i - 1 & -i - 1 \\
2 i & 2 & -i + 1 & -i - 1 \\
\hline
0 & 0 & 2 i - 2 & 0 \\
0 & 0 & 0 & -2 i - 2
\end{array}\right)
\quad \text{and} \quad
M_2=\left(\begin{array}{rr|rr}
i & 0 & 0 & 0 \\
0 & 1 & 0 & 0 \\
\hline
0 & 0 & 1 & 0 \\
0 & 0 & 0 & i
\end{array}\right).$$
Then to see that $P_4 \subset \ker \tilde \rho_4$, it suffices to observe that $\tilde \rho_4(a^4)=I$.

Define $\gamma:\Z[i]^2 \to \GL(4,\C)$ by 
\begin{equation}
\label{eq:gamma}
\gamma(z,w)=\left(\begin{array}{rr|rr}
1 & 0 & z & -\overline{w} \\
0 & 1 & w & \overline{z} \\
\hline
0 & 0 & 1 & 0 \\
0 & 0 & 0 & 1
\end{array}\right).
\end{equation}
The Proposition claims that $\tilde \rho_4(\ker \rho_4)=\gamma(\Lambda)$.
Recall that the quaternion group has a presentation of the form 
$$Q=\langle a,b ~|~ a^4=b^4=a^2 b^2=ab^{-1}ab=1\rangle.$$
Since $\tilde \rho_4(a^4)=\tilde \rho_4(b^4)=I$, it follows that $\tilde \rho_4(\ker \rho_4)$ is generated by images under $\tilde \rho_4$ of conjugates of $a^2 b^2$ and $ab^{-1}ab$. We compute 
$$\tilde \rho_4(a^2 b^2)=\gamma(-1,1)
\quad \text{and} \quad 
\tilde \rho_4(a b^{-1} a b)=\gamma(0,i+1).
$$
Now we will consider $\tilde \rho_4(g a^2 b^2 g^{-1})$ for $g \in F_2$. 
Let $P$ be the top right $2 \times 2$ submatrix of $\tilde \rho_4(a^2 b^2)$ above. 
Conjugates $\tilde \rho_4(g a^2 b^2 g^{-1})$ have top right submatrix given by $\rho_4(g) \cdot P$. 
Thus $\tilde \rho_4(\ker \rho_4)$ contains all the matrices $M_{x,y} P$ where $M_{x,y}$ is in the additive group generated by $\rho_4(g)$ which was described by Proposition \ref{prop:rho 4} in terms of a vector $(x,y) \in \Z[i]^2$. We have
\begin{equation}
\label{eq:conjugation action}
M_{x,y} P = \left(\begin{array}{rr}
x & -\overline{y} \\
y & \overline{x}
\end{array}\right)\cdot
\left(\begin{array}{rr}
-1 & -1 \\
1 & -1
\end{array}\right)=
\left(\begin{array}{rr}
-x-\overline{y} & -x+\overline{y} \\
-y+\overline{x} & -y-\overline{x}
\end{array}\right).\end{equation}
Varying $(x,y)$ over $\{(1,0),(i,0),(0,1),(0,i)\}$ gives generators for the normal subgroup of $\tilde \rho_4(F_2)$
$$N_1=\langle \tilde \rho_4(g a^2 b^2 g^{-1}) ~|~ g \in F_2 \rangle.$$
Namely we see that
$$N_1=\gamma(\Lambda_1) \quad \text{where} \quad \Lambda_1=\langle(-1,1),(-i,-i),(-1,-1),(i,-i)\rangle \subset \Z[i]^2.$$
A similar calculation shows that the normal subgroup
$$N_2=\langle \tilde \rho_4(g a b^{-1} a b g^{-1}) ~|~ g \in F_2 \rangle \quad \text{is given by}$$
$$N_2=\gamma(\Lambda_2) \quad \text{where} \quad \Lambda_1=\langle(0,i+1),(0,1-i),(-1-i,0),(-1+i,0)\rangle \subset \Z[i]^2.$$
A simple calculation shows that 
$$\langle \Lambda_1,\Lambda_2\rangle=\langle(-1,1),(-i,-i),(-1,-1),(0,i+1)\rangle$$
which is a subgroup of $\Z[i]^2$ with index two. Observe that $\Lambda=\langle \Lambda_1,\Lambda_2\rangle$
and from the discussion above we have $\tilde \rho_4(\ker \rho_4)=\gamma(\Lambda)$.

The short exact sequence follows from the fact that $\gamma(\Lambda)$ is a free abelian group of rank four.
\end{proof}

Given $\tilde \rho_4$ and $\rho_4$ as above we may consider the tensor product $\tilde \rho_4'=\overline{\rho_4} \otimes \tilde \rho_4$, which is also an oriented characteristic representation by Proposition \ref{prop:basic operation}. We have $\ker \tilde \rho'_4=\ker \tilde \rho_4$ and we can view $\tilde \rho_4'$ as a homomorphism 
to $\GL(8,\C)$.

Define the homomorphism $\tilde{\tilde\rho}_4:F_2 \to \GL(9,\C)$ such that 
\begin{equation}
\label{eq:ttrho1}
\tilde{\tilde\rho}_4(a)=\diag(1, -1, -i, -i; -1, 1, i, i; 1),
\end{equation}
\begin{equation}
\label{eq:ttrho2}
\tilde{\tilde\rho}_4(b)=\left(\begin{array}{rrrr|rrrr|r}
0 & 0 & 0 & 0 & 0 & 1 & 1 & 0 & 0 \\
0 & 0 & 0 & 0 & -1 & 0 & 0 & 1 & 0 \\
0 & 0 & 0 & 0 & 0 & 0 & 1 & 0 & 1 \\
0 & 0 & 0 & 0 & 0 & 0 & 0 & 1 & 0 \\
\hline
0 & -1 & -1 & 0 & 0 & 0 & 0 & 0 & 0 \\
1 & 0 & 0 & -1 & 0 & 0 & 0 & 0 & 0 \\
0 & 0 & -1 & 0 & 0 & 0 & 0 & 0 & 0 \\
0 & 0 & 0 & -1 & 0 & 0 & 0 & 0 & -1 \\
\hline
0 & 0 & 0 & 0 & 0 & 0 & 0 & 0 & 1
\end{array}\right).
\end{equation}
The top left $8 \times 8$ submatrices of images of $\tilde{\tilde\rho}_4$ realize
$\tilde \rho_4'$. The representation $\tilde {\tilde \rho}_4$ was found by applying the approach of Theorem \ref{thm:tilde rho} to $\tilde \rho_4'$ but we will not prove this. We have:

\begin{prop}
\label{prop:ttrho}
The homomorphism $\tilde{\tilde \rho}_4$ is an oriented characteristic representation. The kernel of $\tilde{\tilde \rho}_4$ contains $P_4$. Furthermore, there is a short exact sequence of groups of the form
$$1 \to \Z^d \to F_2/\ker {\tilde {\tilde \rho}}_4 \to F_2/\ker \tilde \rho_4 \to 1$$
where $d \geq 1$.
\end{prop}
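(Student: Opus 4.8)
The plan is to argue exactly as in the proofs of Propositions~\ref{prop:rho 4} and~\ref{prop:trho4}, reducing each assertion to an explicit finite check. Since $F_2$ is free, \eqref{eq:ttrho1}--\eqref{eq:ttrho2} automatically defines a homomorphism, and in the $8+1$ block decomposition both generators have the form $\left(\begin{smallmatrix} \ast & \ast \\ 0 & 1\end{smallmatrix}\right)$ with top-left $8\times 8$ block equal to the corresponding value of $\tilde\rho_4'=\overline{\rho_4}\otimes\tilde\rho_4$; this is a one-line verification, comparing the top-left blocks of \eqref{eq:ttrho1}--\eqref{eq:ttrho2} with $\overline{\rho_4(a)}\otimes\tilde\rho_4(a)$ and $\overline{\rho_4(b)}\otimes\tilde\rho_4(b)$. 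As these matrices form a group, every image has this shape, say $\tilde{\tilde\rho}_4(g)=\left(\begin{smallmatrix}\tilde\rho_4'(g) & Q(g)\\ 0 & 1\end{smallmatrix}\right)$, and the homomorphism law forces $Q(gh)=Q(g)+\tilde\rho_4'(g)Q(h)$; that is, $Q$ is an affable representation for $\tilde\rho_4'$ in the sense of \S\ref{sect:improving}.

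To show $\tilde{\tilde\rho}_4$ is oriented characteristic I would verify the criterion of Proposition~\ref{prop:characteristic criterion}. One may take $M_-=I$, since condition (--) then reduces to $\overline{\tilde{\tilde\rho}_4(a)}=\tilde{\tilde\rho}_4(a)^{-1}$ and $\overline{\tilde{\tilde\rho}_4(b)}=\tilde{\tilde\rho}_4(b)$, which hold because $\tilde{\tilde\rho}_4(a)$ is diagonal with fourth-root-of-unity entries and $\tilde{\tilde\rho}_4(b)$ has integer entries. For conditions (1) and (2) I would exhibit explicit matrices $M_1,M_2\in\GL(9,\C)$ and confirm the required identities by matrix multiplication; such matrices exist because each condition is linear in $M_i$, so $M_1$ and $M_2$ can be found by solving a linear system, and one expects them to be block-triangular, $\left(\begin{smallmatrix}M & \ast\\ 0 & R\end{smallmatrix}\right)$, assembled from the characteristic matrices of $\tilde\rho_4'$ (tensor products of those in Propositions~\ref{prop:rho 4} and~\ref{prop:trho4}). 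Proposition~\ref{prop:characteristic kernel} then gives that $\ker\tilde{\tilde\rho}_4$ is characteristic. The inclusion $P_4\subseteq\ker\tilde{\tilde\rho}_4$ follows as in the earlier cases: $\tilde{\tilde\rho}_4(a^4)=I$ by \eqref{eq:ttrho1}, so if $p=\psi(a)$ is primitive then, because $\tilde{\tilde\rho}_4$ is oriented characteristic, $\tilde{\tilde\rho}_4(p^4)=\Psi\big(\tilde{\tilde\rho}_4(a^4)\big)=\Psi(I)=I$ for a suitable $\Psi\in\Aut(\GL(9,\C))$.

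For the exact sequence, the top-left block of $\tilde{\tilde\rho}_4(g)$ being $\tilde\rho_4'(g)$ shows $\ker\tilde{\tilde\rho}_4\subseteq\ker\tilde\rho_4'=\ker\tilde\rho_4$, using the equality recorded just before the proposition. Restricting $\tilde{\tilde\rho}_4$ along the chain $\ker\tilde{\tilde\rho}_4\trianglelefteq\ker\tilde\rho_4\trianglelefteq F_2$ yields
$$1\to \tilde{\tilde\rho}_4(\ker\tilde\rho_4)\to F_2/\ker\tilde{\tilde\rho}_4\to F_2/\ker\tilde\rho_4\to 1.$$
For $g\in\ker\tilde\rho_4$ we have $\tilde\rho_4'(g)=I$, so $\tilde{\tilde\rho}_4(g)=\left(\begin{smallmatrix}I_8 & Q(g)\\ 0 & 1\end{smallmatrix}\right)$, and these matrices form an abelian group isomorphic via $g\mapsto Q(g)$ to a subgroup of $(\C^8,+)$. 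Since $\tilde{\tilde\rho}_4(a)^{\pm1}$ and $\tilde{\tilde\rho}_4(b)^{\pm1}$ have entries in $\Z[i]$ (the determinants being $1$), every $Q(g)$ lies in $\Z[i]^8\cong\Z^{16}$, so $\tilde{\tilde\rho}_4(\ker\tilde\rho_4)$ is free abelian of some rank $d$ with $0\le d\le 16$.

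Finally, to get $d\ge 1$ I would exhibit a single $g\in\ker\tilde\rho_4$ with $Q(g)\neq\0$. A convenient source of such elements: for any $x,y\in\ker\rho_4$ the matrices $\tilde\rho_4(x),\tilde\rho_4(y)$ are unipotent of the form $\left(\begin{smallmatrix}I_2 & \ast\\ 0 & I_2\end{smallmatrix}\right)$ and hence commute, so $[x,y]\in\ker\tilde\rho_4=\ker\tilde\rho_4'$. Taking $x=a^2b^2$ and $y=ab^{-1}ab$ --- both in $\ker\rho_4$, with $\tilde\rho_4$-images $\gamma(-1,1)$ and $\gamma(0,i+1)$ from Proposition~\ref{prop:trho4} --- and using that $\tilde\rho_4'(x)$ and $\tilde\rho_4'(y)$ commute, one computes $\tilde{\tilde\rho}_4([x,y])=\left(\begin{smallmatrix}I_8 & (I-\tilde\rho_4'(y))Q(x)-(I-\tilde\rho_4'(x))Q(y)\\ 0 & 1\end{smallmatrix}\right)$; evaluating $Q(x)$ and $Q(y)$ from the cocycle relation, one checks the upper-right vector is nonzero. (Should this particular commutator degenerate, another commutator or a conjugate of one must serve, since otherwise $\tilde{\tilde\rho}_4$ would factor through $\tilde\rho_4'$.) This gives $d\ge1$. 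I expect the only real labor to be the two explicit computations --- producing $M_1,M_2$ with their identities, and pinning down a concrete witness for $d\ge1$ --- both routine; the genuinely creative input, finding $\tilde{\tilde\rho}_4$ via the construction of Theorem~\ref{thm:tilde rho} applied to $\tilde\rho_4'$, is exactly what the statement declines to reprove.
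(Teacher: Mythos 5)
Your outline is the paper's: verify the criterion of Proposition~\ref{prop:characteristic criterion} with $M_-=I$ and suitable $M_1,M_2$, deduce $P_4\subset\ker\tilde{\tilde\rho}_4$ from $\tilde{\tilde\rho}_4(a^4)=I$ plus the characteristic property, read the short exact sequence off the block-triangular shape together with $\ker\tilde\rho_4'=\ker\tilde\rho_4$, and get $d\geq 1$ from one element of $\ker\tilde\rho_4$ with nontrivial last column. However, as written there is a genuine gap at the central claim. For conditions (1) and (2) you never produce $M_1$ and $M_2$, and the existence argument you offer --- ``each condition is linear in $M_i$, so they can be found by solving a linear system'' --- establishes nothing: the conditions are homogeneous linear equations whose solution space could be zero, and even a nonzero solution need not be invertible. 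The existence of such intertwiners is exactly the nontrivial content of the assertion that this particular one-column extension of $\tilde\rho_4'$ is still oriented characteristic (equivalently, that the added affable direction spans a $\tilde\Delta$-invariant line in the sense of Theorem~\ref{thm:tilde rho}, which neither you nor the paper proves abstractly). The paper closes this by writing down explicit $9\times 9$ matrices $M_1,M_2$ and checking the identities; your guess that they are block-triangular with upper-left block the tensor product of the earlier intertwiners is in fact consistent with the paper's matrices, but a guess plus ``solve a linear system'' is not a proof.

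Your treatment of $d\ge 1$ has a smaller flaw: the fallback ``otherwise $\tilde{\tilde\rho}_4$ would factor through $\tilde\rho_4'$'' is both circular and insufficient --- commutators of elements of $\ker\rho_4$ and their conjugates generate only $[\ker\rho_4,\ker\rho_4]$, which is a proper subgroup of $\ker\tilde\rho_4$, so their vanishing would not force the factorization; and the factorization is not absurd on its face, it is precisely the statement $d=0$ that you are trying to exclude. Fortunately the fallback is unnecessary: carrying out your cocycle computation, $Q(a^2b^2)=(0,-1,-1,1,-1,0,1,1)^{T}$ and $Q(ab^{-1}ab)=(i-1,0,0,1+i,0,1+i,1-i,0)^{T}$, and the commutator formula yields the last column $(-2i,0,0,0,0,-2i,0,0)^{T}\neq 0$, in agreement with the paper, which instead evaluates $\tilde{\tilde\rho}_4([a,b]^2)$ explicitly in \eqref{eq:image of commutator squared}. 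So the skeleton is correct and matches the paper, but the proof is incomplete exactly where the statement has content: you must either exhibit $M_1,M_2$ and verify the identities, or else prove that $\tilde{\tilde\rho}_4$ genuinely arises from Theorem~\ref{thm:tilde rho} applied to $\tilde\rho_4'$ (invariance of the corresponding line in $\CA_4/\sim$), as is done for odd $k$ in Theorem~\ref{thm:k odd}.
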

It will follow from later work that $\ker \tilde{\tilde \rho}_4=P_4$ and that $d=1$ in the statement above. See Theorem \ref{thm:faithful}.
\begin{proof}
That $\tilde{\tilde \rho}_4$ is oriented characteristic follows from Proposition \ref{prop:characteristic criterion} with 
$$M_1=\left(\begin{array}{rrrr|rrrr|r}
2 & 2 i & i - 1 & -i - 1 & -2 i & 2 & i + 1 & i - 1 & i - 1 \\
2 i & 2 & -i + 1 & -i - 1 & 2 & -2 i & -i - 1 & i - 1 & i - 1 \\
0 & 0 & 2 i - 2 & 0 & 0 & 0 & 2 i + 2 & 0 & 2 \\
0 & 0 & 0 & -2 i - 2 & 0 & 0 & 0 & 2 i - 2 & -2 \\
\hline
-2 i & 2 & i + 1 & i - 1 & 2 & 2 i & i - 1 & -i - 1 & -i - 1 \\
2 & -2 i & -i - 1 & i - 1 & 2 i & 2 & -i + 1 & -i - 1 & i + 1 \\
0 & 0 & 2 i + 2 & 0 & 0 & 0 & 2 i - 2 & 0 & -2 \\
0 & 0 & 0 & 2 i - 2 & 0 & 0 & 0 & -2 i - 2 & -2 \\
\hline
0 & 0 & 0 & 0 & 0 & 0 & 0 & 0 & 4
\end{array}\right),$$
$$M_2=\diag(1, -i, -i, 1, i, 1, 1, i, 1)$$
and $M_{-}=I$. Again we have $P_4 \subset \ker \tilde{\tilde \rho}_4$ because $\ker \tilde{\tilde \rho}_4(a^4)=I$. 

It may be observed that the upper left $8 \times 8$ submatrix of $\tilde{\tilde \rho}(g)$ is a matrix representation of $\overline{\rho_4}(g) \otimes \tilde \rho_4(g)$. Since $\ker \tilde \rho_4 \subset \ker \rho_4$, we have that $\ker (\overline{\rho_4} \otimes \tilde \rho_4)=\ker \tilde \rho_4$.
Matrices in $\tilde{\tilde \rho}(\ker \tilde \rho_4)$ therefore have the block form 
$$\left(\begin{array}{rr}
I & \vv \\
0 & 1
\end{array}\right),$$
where $I$ is the $8 \times 8$ identity matrix and $\vv$ is an $8 \times 1$ matrix with entries in $\Z[i]$. Thus $\tilde{\tilde \rho}(\ker \tilde \rho_4)$ is isomorphic to an additive subgroup of $\Z[i]^8$.
Let $d=\rank \tilde{\tilde \rho}(\ker \tilde \rho_4)$. We compute
\begin{equation}
\label{eq:image of commutator squared}
\tilde{\tilde \rho}_4([a,b]^2)=\left(\begin{array}{rrrr|rrrr|r}
1 & 0 & 0 & 0 & 0 & 0 & 0 & 0 & 2 i \\
0 & 1 & 0 & 0 & 0 & 0 & 0 & 0 & 0 \\
0 & 0 & 1 & 0 & 0 & 0 & 0 & 0 & 0 \\
0 & 0 & 0 & 1 & 0 & 0 & 0 & 0 & 0 \\
\hline
0 & 0 & 0 & 0 & 1 & 0 & 0 & 0 & 0 \\
0 & 0 & 0 & 0 & 0 & 1 & 0 & 0 & 2 i \\
0 & 0 & 0 & 0 & 0 & 0 & 1 & 0 & 0 \\
0 & 0 & 0 & 0 & 0 & 0 & 0 & 1 & 0 \\
\hline
0 & 0 & 0 & 0 & 0 & 0 & 0 & 0 & 1
\end{array}\right).
\end{equation}
Thus $[a,b]^2$ lies in $\ker \tilde \rho_4$ and its image generates a copy of $\Z$ in $\tilde{\tilde \rho}_4(F_2)$. This shows $d \geq 1$.
Finally observe that we have the natural short exact sequence
$$1 \to \tilde{\tilde \rho}(\ker \tilde \rho_4) \to {\tilde {\tilde \rho}}_4(F_2) \to (\overline{\rho_4} \otimes \tilde \rho_4)(F_2) \to 1.$$
Here, the map ${\tilde {\tilde \rho}}_4(F_2) \to (\overline{\rho_4} \otimes \tilde \rho_4)(F_2)$ is the map that takes a matrix in ${\tilde {\tilde \rho}}_4(F_2)$ to its top left $8 \times 8$ block. We have 
${\tilde {\tilde \rho}}_4(F_2) \cong F_2/ \ker {\tilde {\tilde \rho}}_4$,
and we have
$(\overline{\rho_4} \otimes \tilde \rho_4)(F_2) \cong F_2/ \ker \tilde \rho_4$ from the discussion above. 
This yields the exact sequence in the proposition.
\end{proof}

\section{Characterizing $F_2/P_4$}
\label{sect:P4}

A \emph{polycyclic group} is a group that admits a subnormal series with cyclic factors. Any group that is virtually nilpotent is polycyclic.
The \emph{Hirsch length} of a polycyclic group is the number of infinite factors in any subnormal series with cyclic factors.
For any polycyclic group $G$, we will refer to the Hirsch length as the \emph{dimension} of the group, and denote it by $\dim(G)$.
A fact that we will use repeatedly in this section is that for any normal subgroup $N \subset G$, we have (see, for instance, \cite[Theorem 4.7]{MR3729243} for a proof)
$$
\dim(G) - \dim(N) = \dim(G/N).
$$
In particular, if $G$ is torsion-free and $N$ is non-trivial, then it follows that $\dim(G) > \dim(G/N)$.

For this section let $G=F_2/P_4$. 
The following proposition tells us that $G$ is virtually torsion-free nilpotent of dimension equal to 5. 
We will use this result to prove that the representation $\tilde{\tilde \rho}_4$ is faithful. For this, we will need to record the generators of the torsion free subgroup we find.

\begin{prop}
\label{prop:N_p}
Let $N$ be the subgroup of $G$ generated by
$$
a_1=E^{-2}, \quad a_2=A^4  D^2, \quad a_3=A E^{-2} A^{-4} B A^{-1} B^{-1}, \quad \text{and} \quad 
a_4=A^9 C A^{-1} C^{-1} B A B^{-1} A^{-1},
$$
where
$$
A = ba^{-1} b^{-1} a, \quad B = b^{-1} a b a^{-1}, \quad C = b^{-1} a^{-1} b a,$$
$$D = a^2 b (a^{-1} b^{-1})^2 a^{-1} b a, \quad \text{and} \quad
 E = b^{-1} (ab)^2 a^{-3} b^{-1} a.$$
Then $N$ is a 5-dimensional torsion-free nilpotent subgroup of index $2^{12}$ in $F_2/P_4$ that is isomorphic to $H(\Z) \times H(\Z)$ with one non-trivial added relator.
\end{prop}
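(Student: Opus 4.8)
\emph{Proof strategy.} The plan is to trap $N_p$ between an upper bound coming from relations that hold in $G = F_2/P_4$ and a lower bound coming from the oriented characteristic representation $\tilde{\tilde\rho}_4$. We present $G$ by the six relators $a^4,b^4,(ab)^4,(ab^{-1})^4,(a^2b)^4,(ab^2)^4$ supplied by Corollary \ref{cor:generators}; since $P_4 \subset \ker\tilde{\tilde\rho}_4$ (Proposition \ref{prop:ttrho}), $\tilde{\tilde\rho}_4$ descends to a homomorphism on $G$, and only this fact is used, not faithfulness. Let $K = (H(\Z)\times H(\Z))/\langle\langle z_1 z_2^{-1}\rangle\rangle$ be the group obtained from the direct product of two Heisenberg groups by identifying their central generators. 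Then $K$ is torsion-free, nilpotent of class $2$, of Hirsch length $5$, with infinite cyclic center and $K^{\mathrm{ab}}\cong\Z^4$; moreover, since $K$ is torsion-free and Hirsch length is additive in extensions of finitely generated nilpotent groups, every proper quotient of $K$ has Hirsch length strictly less than $5$.

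\emph{Step 1: relations in $G$.} Write $g_1 = E^{-2}$, $g_2 = A^4D^2$, $g_3 = AE^{-2}A^{-4}BA^{-1}B^{-1}$, $g_4 = A^9CA^{-1}C^{-1}BAB^{-1}A^{-1}$ for the four given generators of $N_p$. I would select words $x_1,y_1,x_2,y_2$ in $g_1,\dots,g_4$ which again generate $N_p$, and verify --- by reducing modulo $P_4$ in the spirit of the commutator computation used above for $F_2/P_3$ --- that the defining relations of $K$ hold among $x_1,y_1,x_2,y_2$ in $G$: all commutators between the two pairs $\{x_1,y_1\}$ and $\{x_2,y_2\}$ are trivial, and the two internal commutators $[x_1,y_1]$ and $[x_2,y_2]$ coincide with a common element that is central. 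This yields a surjection $K \twoheadrightarrow N_p$; in particular $N_p$ is nilpotent of class at most $2$.

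\emph{Step 2: lower bound and index.} Next I would compute the unipotent $9\times 9$ matrices $\tilde{\tilde\rho}_4(g_i)$ and study the group $\Gamma_0 = \tilde{\tilde\rho}_4(N_p)$ they generate. Since $\Gamma_0$ is a quotient of $N_p$, which by Step 1 is a quotient of $K$, the group $\Gamma_0$ is nilpotent of class $\le 2$ and $[\Gamma_0,\Gamma_0]$ is a quotient of $\Z$. A direct matrix computation then shows $\rk \Gamma_0^{\mathrm{ab}} = 4$ and that $[\Gamma_0,\Gamma_0]$ is nontrivial, hence infinite cyclic --- its nontriviality being visible exactly as in the computation of $\tilde{\tilde\rho}_4([a,b]^2)$ in Proposition \ref{prop:ttrho}. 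Thus $\Gamma_0$ has Hirsch length $4 + 1 = 5$, so by the remark in the first paragraph $\Gamma_0 \cong K$. Consequently $K \twoheadrightarrow N_p \twoheadrightarrow \Gamma_0 \cong K$ is a surjective endomorphism of the finitely generated --- hence Hopfian --- nilpotent group $K$, so both surjections are isomorphisms and $N_p \cong K$, which is $5$-dimensional, torsion-free, nilpotent of class $2$, and is $H(\Z)\times H(\Z)$ with one nontrivial added relator. For the index, a coset enumeration for $N_p \le G$ against the six relators of $G$ and the four subgroup generators terminates with $2^{12}$ cosets, so $[G:N_p] = 2^{12}$. (Reidemeister--Schreier applied to this same enumeration re-derives the presentation $K$ of $N_p$ directly, giving an alternative to Step 2.)

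\emph{Main obstacle.} The crux is Step 1: one must verify the Heisenberg relations inside $G = F_2/P_4$ itself, where --- unlike in Step 2 --- one cannot pass to a matrix group but must argue directly from the six defining relators, and the relators $(a^2b)^4$ and $(ab^2)^4$ enter in an essential and entangled way. The correction factors $E^{-2}$ and $D^2$ and the precise exponents ($A^4$ in $g_2$, $A^{-4}$ in $g_3$, $A^9$ in $g_4$) are tuned exactly so that these commutators close up, and explaining why those particular choices work is the delicate point. The coset enumeration in Step 2 is the other computationally substantial ingredient, though routine at this size.
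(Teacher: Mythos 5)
There is a genuine gap, and it sits exactly at the point you flag as the crux. Your model group $K=(H(\Z)\times H(\Z))/\langle\langle z_1z_2^{-1}\rangle\rangle$ is not the group $N_p$: the paper's computation shows that the added relator identifies the \emph{square} of a central generator of one Heisenberg factor with a central generator of the other (in the paper's notation, $[a_1,a_4]^2$ is identified with $[a_2,a_3]^{\pm1}$), not the central generators themselves. These two groups are not isomorphic: both are torsion-free, class-$2$, Hirsch length $5$ with commutator subgroup $\Z$ and abelianization $\Z^4$, but the alternating form $N^{\mathrm{ab}}\times N^{\mathrm{ab}}\to[N,N]\cong\Z$ induced by the commutator has elementary divisors $(1,1)$ for your $K$ and $(1,2)$ for the actual $N_p$, and this form is an isomorphism invariant. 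Consequently no surjection $K\twoheadrightarrow N_p$ can exist at all (such a surjection between torsion-free nilpotent groups of equal Hirsch length would be an isomorphism), so Step 1 as stated cannot be completed: there is no generating $4$-tuple $x_1,y_1,x_2,y_2$ of $N_p$ with trivial cross-commutators and $[x_1,y_1]=[x_2,y_2]$. The asymmetry is even visible in the paper's later computation, where commutators of the given generators map to $\tilde{\tilde\rho}_4([a,b]^2)^{8}$ rather than to a common ``unit'' central element. With the correct target group your overall architecture (surjection from the model group, lower bound via $\tilde{\tilde\rho}_4$ using only $P_4\subset\ker\tilde{\tilde\rho}_4$, Hirsch-length additivity, and Hopficity) would be sound and is a genuinely different and attractive route from the paper's, which instead computes (with GAP) a presentation of the derived subgroup $G_2$ and then of $N_p$ as a quotient of $F_2\times F_2$ by three explicit relators and simplifies by hand.

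The second problem is that Step 1 is not actually carried out, and it is far from routine: verifying the required commutation relations inside $G=F_2/P_4$ ``by reducing modulo $P_4$ in the spirit of the $F_2/P_3$ computation'' is precisely the part the paper does not attempt by hand — it extracts the presentation of $N_p$ via machine computation (Reidemeister--Schreier/coset enumeration in GAP) and only then simplifies algebraically. Your proposal acknowledges this as the main obstacle but offers no workable method for it, while the coset enumeration you do invoke for the index $2^{12}$ is the same computational tool the paper relies on; so as written the proposal is a plausible plan built around a misidentified model group rather than a proof.
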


\begin{proof}
Let $G_2$ be the second term of the derived series for $G$, where $G$ is described in terms of the relations provided by the table on page \pageref{table:P4}. 
Using \cite{GAP4}, we can confirm that $G_2$ is a subgroup of finite index in $G$. Moreover, \cite{GAP4} gives us the following presentation for $G_2$ (the $F_i$ notation follows GAP's output):

\begin{eqnarray*}
\langle F_1, F_2, F_3, F_4, F_5 ~|~  \\
F_3^{-1} F_1^{-1} F_3 F_1 &=&
F_2^{-1} F_3^{-1} F_2 F_3= 
F_2^{-1}F_4 F_2 F_4^{-1}= 1, \\
F_1^{-1} F_2 F_1 F_2^{-1}&=&
F_4 F_5 F_4^{-1} F_5^{-1}= 
F_5^{-1} F_2 F_5 F_2^{-1}= 1, \\
F_4 F_1^{-1} F_5 F_4^{-1} F_1 F_5^{-1}&=& 
F_4^{-1} F_5^{-1} F_3 F_5 F_4 F_3^{-1} =
F_5^{-1} F_3 F_1 F_5 F_3 F_1^{-1} = 1, \\
F_4 F_1 F_3 F_4^{-1} F_1^{-1} F_3^{-1}&=&
F_3^{-1} F_2 F_1^{-1} F_4 F_3 F_2^{-1} F_1 F_4^{-1}=1\\
F_5^{-1} F_3 F_5 F_3^{-1} F_5 F_3 F_5^{-1} F_3^{-1}&=& 1, \\
F_1^{-1} F_2^{-1} F_5 F_3^{-1} F_5^{-1} F_1 F_2^{-2} F_3 F_2^{-1} &=& 
F_2 F_4 F_1 F_4^{-1} F_2^3 F_1^{-1}=1  \rangle
\end{eqnarray*}

From this presentation and computations in \cite{GAP4}, we see that $G_2$ satisfies the following:
\begin{enumerate}
\item first homology of $G_2$ is $\Z/4 \times \Z^4$.
\item $G_2$ has index 1024.
\end{enumerate}

Let $N$ be the group generated by $F_1, F_3, F_4, F_5$.
Using \cite{GAP4}, we can check that $N$ has index $2^{12}$ and has the desired generators.
Moreover, \cite{GAP4} gives that $N$ has a presentation of the form:
$$
N = \left< a_1, a_2, a_3, a_4 ~|~ R \right>,
$$
where $$R = \{ [a_1, a_2], [a_3, a_4], [a_1, a_3], [a_4^2 a_3, a_1^{-1} a_2], [a_2, a_4], [a_4 a_3, a_1^{-2} a_2], [a_4 a_3, (a_1^{-1} a_2)^{-1} a_4 (a_1^{-1} a_2) \}.$$
This is a quotient of the right-angled Artin group $F_2 \times F_2$ with the three added relators 
$$
[a_4^2 a_3, a_1^{-1} a_2], [a_4 a_3, a_1^{-2} a_2], [a_4 a_3, (a_1^{-1} a_2)^{-1} a_4 (a_1^{-1} a_2)].
$$
Viewing the group as $F_2 \times F_2 = \left< a_1, a_4 \right> \times \left< a_2, a_3 \right>$, we can simplify the relations to:
$$
([a_4^2, a_1^{-1}], [a_3, a_2]),
([a_4, a_1^{-2}], [a_3, a_2]),
\gamma := ([a_4, a_1 a_4 a_1^{-1}],1).
$$
Using suitable conjugations, we further simplify the relations to:
$$
([a_1,a_4^2], [a_3, a_2]),
([a_1^2, a_4], [a_3, a_2]),
\gamma := ([a_4, a_1 a_4 a_1^{-1}],1).
$$

Then $N$ is the group $(F_2 \times F_2)/ K$ where $K$ is the normal subgroup generated by the elements above. 

The last relator gives $[a_1, a_4]$ and $a_4$ commute.
By the two other relators, we have
$([a_1, a_4^2],1) = ([a_1^2, a_4],1)$. This equality is equivalent to
$[a_1, a_4] [a_1, a_4]^{a_4} = [a_1, a_4]^{a_1} [a_1, a_4]$.
Hence, $([a_1, a_4], 1)$ is central in $N$.
Moreover, since $([a_1,a_4^2], [a_3, a_2])$ is a relator, $(1, [a_3, a_2])$ is also central in $N$.

Let $H_1$ be the image of $F_2 \times 1$ in $N$ and $H_2$ the image of $1 \times F_2$.
Since $([a_1, a_4],1)$ and $(1, [a_3, a_2])$ are central, the groups $H_1$ and $H_2$ are both quotients of $H(\Z)$ (in fact, they are both isomorphic to $H(\Z)$).
It follows that $N$ must be $H(\Z) \times H(\Z)$ with a relation identifying the square of a central generator of $H(\Z) \times 1$ with one of $1 \times H(\Z)$.
It is now clear that $N$ has infinite center.
Thus, $N$ has Hirsch length equal to 5 and is torsion-free, as desired.
\end{proof}

Recall the definition of $\tilde{\tilde \rho}_4:F_2 \to \GL(9,\C)$ described by \eqref{eq:ttrho1} and \eqref{eq:ttrho2}.
From Proposition \ref{prop:ttrho}, $P_4 \subset \ker \tilde{\tilde \rho}_4$ thus we can consider
$\tilde{\tilde \rho}_4$ to be a homomorphism from $G$ to $\GL(9,\C)$.

\begin{thm}
\label{thm:faithful}
The representation $\tilde{\tilde \rho}_4:F_2/P_4 \to \GL(9,\C)$ is faithful. We have $d=1$ in Proposition \ref{prop:ttrho}.
\end{thm}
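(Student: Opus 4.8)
Write $G=F_2/P_4$ and $G'=F_2/\ker\tilde{\tilde\rho}_4$. Since $P_4\subseteq\ker\tilde{\tilde\rho}_4$ by Proposition \ref{prop:ttrho}, there is a surjection $\pi\colon G\twoheadrightarrow G'$ with $\ker\pi=\ker\tilde{\tilde\rho}_4/P_4$, and the theorem amounts to $\ker\pi=1$ (together with $d=1$). The plan is to combine the structural description of $G$ in Proposition \ref{prop:N_p} with a Hirsch-length count to reduce everything to a statement about a finite normal subgroup of $G$, and then to dispose of that subgroup by an explicit matrix computation.

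First I would pin down Hirsch lengths, writing $h(\,\cdot\,)$ for the Hirsch length of a virtually polycyclic group. By Proposition \ref{prop:N_p}, $G$ is virtually the finitely generated torsion-free nilpotent group $N_p$, so $G$ is virtually polycyclic with $h(G)=5$; hence its quotient $G'$ is virtually polycyclic with $h(G')\le 5$. On the other hand, Proposition \ref{prop:ttrho} gives a short exact sequence $1\to\Z^d\to G'\to F_2/\ker\tilde\rho_4\to 1$ with $d\ge 1$, while Proposition \ref{prop:trho4} gives $1\to\Z^4\to F_2/\ker\tilde\rho_4\to Q\to 1$ with $Q$ finite; thus $h(F_2/\ker\tilde\rho_4)=4$ and $h(G')=d+4$. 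Therefore $5=h(G)\ge h(G')=d+4\ge 5$, which forces $d=1$ (this proves the second assertion of the theorem) and $h(G')=5$. Since Hirsch length is additive along $1\to\ker\pi\to G\to G'\to 1$, we get $h(\ker\pi)=0$; that is, $\ker\pi$ is a \emph{finite} normal subgroup of $G$.

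It remains to prove $\ker\pi=1$. Because $N_p$ is torsion-free, $\ker\pi\cap N_p=1$, so $\pi$ restricts to an isomorphism of $N_p$ onto its image $\tilde{\tilde\rho}_4(N_p)\le G'$, and moreover $\pi^{-1}(\tilde{\tilde\rho}_4(N_p))=N_p\cdot\ker\pi$, whence
$$[\,\tilde{\tilde\rho}_4(F_2):\tilde{\tilde\rho}_4(N_p)\,]=\frac{[G:N_p]}{|\ker\pi|}=\frac{2^{12}}{|\ker\pi|}.$$
Thus $\ker\pi=1$ is \emph{equivalent} to the lower bound $[\,\tilde{\tilde\rho}_4(F_2):\tilde{\tilde\rho}_4(N_p)\,]\ge 2^{12}$. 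I would establish this bound by computing the matrices $\tilde{\tilde\rho}_4$ assigns to $a$, $b$, and to the explicit generators of $N_p$ listed in Proposition \ref{prop:N_p}, and then computing the index of the subgroup they generate inside $\tilde{\tilde\rho}_4(F_2)$ --- either directly, using that $\tilde{\tilde\rho}_4(F_2)$ is virtually polycyclic, or by reducing the entries modulo a suitable ideal of $\Z[i]$ and checking that already in that finite quotient the image of $N_p$ has index $2^{12}$ (reduction only decreases the index, so such a finite check gives the required bound). Since $[G:N_p]=2^{12}$, an index of $2^{12}$ is the largest possible, and it pins down $|\ker\pi|=1$; hence $\pi$ is an isomorphism, $\ker\tilde{\tilde\rho}_4=P_4$, and $\tilde{\tilde\rho}_4$ is faithful on $F_2/P_4$.

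The main obstacle is the last step: ruling out a nontrivial finite normal subgroup of $G=F_2/P_4$. A purely abstract argument seems difficult because $G$ is only \emph{virtually} torsion-free --- it has $2$-torsion (e.g. the image of $a$), and a priori could carry a small central $2$-subgroup --- so I expect this to require the explicit verification, in the spirit of the GAP computations already used for Proposition \ref{prop:N_p}, that the $\tilde{\tilde\rho}_4$-images of the generators of $N_p$ generate a subgroup of index exactly $2^{12}$ in $\tilde{\tilde\rho}_4(F_2)$. Everything else is bookkeeping with Hirsch lengths and the already-established short exact sequences.
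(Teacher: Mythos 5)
Your structural skeleton is correct and is essentially the paper's own argument: the paper also works with $N_p$, uses torsion-freeness of $\tilde{\tilde\rho}_4(\ker\rho_4)$ (via the two $\Z^{4}$- and $\Z^{d}$-extensions), gets $d=1$ by the same dimension count you phrase via Hirsch lengths (yours is, if anything, a slightly cleaner bookkeeping: $5=h(G)\ge h(G')=d+4\ge 5$), and reduces faithfulness to the lower bound $[\tilde{\tilde\rho}_4(G):\tilde{\tilde\rho}_4(N_p)]\ge 2^{12}$ exactly as you do. So for the second assertion ($d=1$) and for the reduction of faithfulness to an index inequality, your proposal is fine.

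The genuine gap is that this index inequality --- which is the heart of the theorem --- is never established; you only describe a computation you would perform. The paper does not do a single opaque index computation either: it obtains the bound by filtering through the tower $Q$, $\Z^4$, $\Z$. Concretely: (i) since the generators $g_0,\dots,g_3$ of $N_p$ lie in $\ker\rho_4$ and $\rho_4(G)\cong Q$, one gets a factor $2^3$; (ii) computing $\tilde\rho_4(g_i)$ explicitly shows $\tilde\rho_4(N_p)=\gamma(\Lambda')$ with $[\Lambda:\Lambda']=2^{6}$, where $\gamma(\Lambda)=\tilde\rho_4(\ker\rho_4)$ is the $\Z^4$ from Proposition \ref{prop:trho4}; (iii) computing $\tilde{\tilde\rho}_4$ on commutators of the $g_i$ shows they are either trivial or equal to $\tilde{\tilde\rho}_4([a,b]^2)^{8}$, so inside the central copy of $\Z$ the image of $N_p\cap\ker\tilde\rho_4$ has index at least $2^{3}$; multiplying gives $\ge 2^{3+6+3}=2^{12}$. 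Some verification of this kind must appear for your argument to be a proof rather than a conditional reduction. Also, your proposed shortcut of certifying the bound by reducing modulo an ideal of $\Z[i]$ is sound in the direction you use it (indices only drop in quotients), but it is not guaranteed in advance that any congruence quotient realizes the full index $2^{12}$ --- the relevant finite-index subgroup need not be a congruence subgroup a priori --- so that route still requires exhibiting a specific ideal that works, or else falling back on a layered computation like the paper's.
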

\begin{proof}
First we will show that $d=1$ using a dimension argument. For this proof, consider $\rho_4$, $\tilde \rho_4$ and $\tilde{\tilde \rho}_4$ to homomorphisms from $F_2/P_4$ and their kernels to be subgroups of $F_2/P_4$.
We can compute that the generators of $N$ lie in $\ker \rho_4$, and we conclude $N \subset \ker \rho_4$. Propositions \ref{prop:trho4}
and \ref{prop:ttrho} tell us that $\tilde \rho_4(\ker \rho_4)$ is isomorphic to $\Z^4$ and 
$\tilde {\tilde \rho}_4(\ker \rho_4)$ is a further $\Z^d$-extension for $d \geq 1$. It follows that $\tilde {\tilde \rho}_4(\ker \rho_4)$ is polycyclic. Moreover, $\dim \tilde {\tilde \rho}_4(\ker \rho_4) = 4+d \geq 5$. Since dimension non-strictly drops under surjective homomorphisms, we have
$\dim N \geq \dim \tilde {\tilde \rho}_4(N)$, and since $N$ is finite index inside of $F_2/P_4$, we have
$\dim \tilde {\tilde \rho}_4(N)=\dim \tilde {\tilde \rho}_4(F_2/P_4)$. Putting this all together we have
$$5 = \dim N \geq \dim \tilde {\tilde \rho}_4(N) = \dim \tilde {\tilde \rho}_4(F_2/P_4) = 4+d \geq 5.$$
We conclude that all expressions in the above line are $5$, and therefore $d=1$. Since non-trivial quotients of $N$ have strictly smaller dimension, we also get that the restriction of $\tilde {\tilde \rho}_4$ to $N$ is injective. Thus the faithfulness claimed in the theorem will follow if we can prove that subgroup indices satisfy
$$[\tilde {\tilde \rho}_4(F_2/P_4):\tilde {\tilde \rho}_4(N)] = [F_2/P_4:N].$$
We already know that $[F_2/P_4:N]=2^{12}$. It suffices to prove that $[\tilde{\tilde \rho}_4(G):\tilde{\tilde \rho}_4(N)] \geq 2^{12}$ since index can not grow under group homomorphisms.

First observe that $[\rho_4(G):\rho_4(N)]=2^3$ since $N \subset \ker \rho_4$ and $\rho_4(G)$ is isomorphic to the quaternion group. 

Now consider the index $[\tilde \rho_4(G):\tilde \rho_4(N)]$. Let $a_1, a_2, a_3, a_4$ denote the generators for $N$ listed in Proposition \ref{prop:N_p}.
Define $\gamma:\Z[i]^2 \to \GL(4,\C)$ as in \eqref{eq:gamma}.
By Proposition \ref{prop:trho4}, $\tilde \rho_4(\ker \rho_4)=\gamma(\Lambda)$ where $\Lambda \subset \Z[i]^2$ is a subgroup of index two. We compute
\begin{equation}
    \label{eq:tilde rho g}
\begin{array}{lcl}
\tilde \rho_4(a_1)=\gamma(-2i-2,-2i+2), & & \tilde \rho_4(a_2)=\gamma(2i-2,2i+2) \\
\tilde \rho_4(a_3)=\gamma(4,0), & \text{and} & \tilde \rho_4(a_4)=\gamma(0,4i). \\
\end{array}
\end{equation}
Thus $\tilde \rho_4(N)=\gamma(\Lambda')$ where
$$\Lambda'=\langle 
(-2-2i,-2i+2),(2i-2,2i+2),(4,0),(0,4i)\rangle).$$
Based on this, we observe $\Lambda' \subset \Lambda$ and we can compute that $\big[\Z[i]^2:\Lambda'\big]=2^7$ and thus $[\Lambda:\Lambda']=2^6$. It follows that 
$$[\tilde \rho_4(\ker \rho_4):\tilde \rho_4(N)]=2^{6} \quad \text{and} \quad
[\tilde \rho_4(F_2/P_4):\tilde \rho_4(N)]=2^{6+3}.$$

Finally, we consider the index $[\tilde{\tilde \rho}_4(G):\tilde{\tilde \rho}_4(N)]$. From the above, we know that $F_2/\ker \tilde{\tilde \rho}_4$ is a $\Z$-extension of $F_2/\ker \tilde \rho_4$. We have $[a,b]^2 \in \ker \tilde \rho_4$ but $\tilde{\tilde \rho}_4([a,b]^2) \neq I$ (see \eqref{eq:image of commutator squared}).
Since the images under $\tilde \rho_4$ of the four generators $a_i$ freely generate the image $\tilde \rho_4(N)$ which is isomorphic to $\Z^4 \cong N / [N, N]$, 
it follows that $N \cap \ker \tilde \rho_4 = [N, N]$. Since $N$ is two-step nilpotent, this commutator subgroup is generated by commutators of the generators of $N$. We compute
$$\tilde{\tilde \rho}_4([a_1,a_2])=\tilde{\tilde \rho}_4([a_3,a_4])=I.$$
For other pairs of generators of $N$ we have:
$$
\tilde{\tilde \rho}_4([a_3,a_1])=
\tilde{\tilde \rho}_4([a_4,a_1])=
\tilde{\tilde \rho}_4([a_2,a_3])=
\tilde{\tilde \rho}_4([a_4,a_2])=
\tilde{\tilde \rho}_4([a,b]^2)^8.$$
Thus the central copy of $\Z$ in $\tilde{\tilde \rho}_4(F_2/P_4)$ contains $\tilde{\tilde \rho}_4(N \cap \ker \tilde \rho_4)$ with index at least $2^3$. Consequently, $[\tilde{\tilde \rho}_4(F_2/P_4):\tilde {\tilde \rho}_4(N)] \geq 2^{3+6+3}$ as desired.
\end{proof}

% We remark that this proof also gives that $[a,b]^2$ generates the center of $F_2/P_4$.

%
%
% \subsection{Describing a quotient of $P_4$ through explicit covers}
%
% Two covers of the figure eight are indicated in Figure \ref{fig:covers}. One explicitly shows that the they form an automorphism class in the set of subgroups in $F_2$.
% Hence, the intersection of the two subgroups, call it $K$, corresponding to these two covers is characteristic.
% Moreover, both subgroups contain all the relations for $P_4$.
% It follows that $K \geq P_4$.5
%
% \begin{figure}
% \includegraphics[width=\textwidth]{fourthcover}
% \caption{Two infinite covers of the figure eight.}
%  \compat{The path associated to $(ab)^4$ does not seem to close up in the diagram at right.}
%  \comkha{I'm sorry, I copied it from my notes incorrectly. It's fixed now.}
% \label{fig:covers}
% \end{figure}
%

%\subsection{Explicit representations of $F_2/P_4$}

\appendix 

\section{Relation to square tiled surfaces}
\label{appendix}
A {\em translation surface} is a surface equipped with an atlas of coordinate charts to the plane such that all transition functions are restrictions of translations.  

Let $\T$ denote the $2$-torus $\R^2/\Z^2$ and $\T^\circ=\T \smallsetminus \{\0\}$ be the once punctured torus. A {\em square-tiled surface} (or {\em origami}) is a cover of $\T^\circ$ endowed with the pullback translation structure. Here we allow the cover to be finite or infinite. 
See \cite{Zorich06} for a survey discussing translation surfaces including square-tiled surfaces.

Fix a translation surface $S$. Given a vector $(u,v) \in \R^2$ the {\em straight-line flow determined by $(u,v)$} is the flow $F^t:S \to S$ given in local coordinates by
$$F^t(x,y)=(x,y)+t(u,v).$$
The straight line flow of a point will not be defined for all time if under the projection to $\T$
the flow hits the puncture at $\0$. We call such a straight-line trajectory {\em singular}.

Let $(u,v) \in \Z^2$ and assume $u$ and $v$ are relatively prime. Then the straight-line flow determined by $(u,v)$ on the torus $\T$ is periodic with all points having period one. Let $S$ be a square tiled surface. For a positive integer $k$ we say $S$ is {\em $k$-periodic} if for all relatively prime $(u,v) \in \Z^2$, every non-singular straight-line trajectory determined by $(u,v)$ is periodic with period dividing $k$.

We take $(\frac{1}{2},\frac{1}{2})$ to be the basepoint of $\T^\circ$ and say that 
a {\em square tiled surface with basepoint} is a square tiled surface $S$ with the choice of a basepoint $s$ such that the covering map to $\T^\circ$ maps $s$ to $(\frac{1}{2},\frac{1}{2})$. If $S_1$ and $S_2$ are two square tiled surfaces with basepoints $s_1$ and $s_2$ respectively and $\pi_i:S_i \to \T^\circ$ are the associated covering maps we say that  $S_1$ {\em covers} $S_2$ if there is a covering map $\pi:S_1 \to S_2$ satisfying $\pi(s_1)=\pi(s_2)$ and $\pi_2 \circ \pi=\pi_1$.

This paper originated with the following observation:
\begin{prop}
\label{prop:universal square tiled surface}
For any $k \geq 1$ there is a $k$-periodic square tiled surface with basepoint $U_k$
such that $U_k$ covers any other $k$-periodic square-tiled surface with basepoint.
\end{prop}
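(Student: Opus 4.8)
\section*{Proof proposal}

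The plan is to translate everything into covering space theory. A square-tiled surface with basepoint is, by definition, a pointed connected cover of $\T^\circ$ with the pulled-back translation structure, and $\pi_1(\T^\circ,(\tfrac12,\tfrac12))$ is free of rank two; so such surfaces are classified, up to basepoint-preserving isomorphism, by subgroups of $F_2$, with the covering relation corresponding to inclusion of subgroups. I would then show that the $k$-periodicity condition corresponds to containing $P_k$, and finally take $U_k$ to be the cover associated to the subgroup $P_k$ itself.

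First I would fix an identification $\pi_1(\T^\circ,(\tfrac12,\tfrac12))\cong F_2=\langle a,b\rangle$, where $a$ and $b$ are the unit horizontal and vertical loops through $(\tfrac12,\tfrac12)$ (which miss $\0$) and $[a,b]$ encircles $\0$; this is compatible with $\mathrm{ab}\colon F_2\to\Z^2$ of \S\ref{sect:primitive}. By covering space theory, connected covers of $\T^\circ$ with a chosen point over $(\tfrac12,\tfrac12)$ are classified by subgroups $H\le F_2$, covers being allowed to be infinite (which matches the statement and is essential, since $P_k$ has infinite index for $k\ge 4$); a square-tiled surface with basepoint is precisely such a pointed cover. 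Moreover, if $H_1,H_2\le F_2$ correspond to $(S_1,s_1),(S_2,s_2)$, then $S_1$ covers $S_2$ in the sense defined above if and only if $H_1\le H_2$.

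Next I would prove the key equivalence: the square-tiled surface $(S,s)$ with associated subgroup $H$ is $k$-periodic if and only if $P_k\le H$. A primitive direction $(u,v)\in\Z^2$ determines a family of parallel closed geodesics on $\T$, all but one of which descends to a nonsingular loop on $\T^\circ$, and any such loop is freely homotopic to a representative of the conjugacy class of $g^{\pm1}$ for any primitive $g\in{\mathcal P}(\tfrac{u}{v})$ (using that a simple closed curve on the punctured torus is primitive and its homology class is $\pm(u,v)$, as in \S\ref{sect:primitive}). Lifting to $S$ and using the monodromy action of $F_2$ on the fiber $H\backslash F_2$, the straight-line trajectory through the lift indexed by a coset $Hz$ is periodic with period dividing $k$ exactly when $z g^k z^{-1}\in H$; ranging over all cosets and all primitive directions, $S$ is $k$-periodic if and only if $H$ contains $z g^k z^{-1}$ for every primitive $g$ and every $z\in F_2$. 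Since primitivity is a conjugacy invariant, $P_k$ is a normal subgroup of $F_2$ and equals the normal closure of $\{g^k:~g\text{ primitive}\}$, so this condition is precisely $P_k\le H$. Granting this, the proposition follows at once: let $U_k$ be the square-tiled surface with basepoint associated to the subgroup $P_k$; taking $H=P_k$ shows $U_k$ is $k$-periodic, and for any $k$-periodic square-tiled surface with basepoint $(S,s)$ with associated subgroup $H$ we get $P_k\le H$, hence $U_k$ covers $S$.

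The step I expect to demand the most care is the equivalence in the third paragraph: making the monodromy bookkeeping precise — identifying which points over $\T^\circ$ give rise to nonsingular trajectories, pinning down the element of $F_2$ representing a nonsingular periodic loop in a given direction, and verifying that as one ranges over all such trajectories the relevant coset representatives $z$ genuinely exhaust a full transversal of $H\backslash F_2$ (so that the condition forces all conjugates of $g^k$ into $H$, not merely some of them), together with the identification of nonsingular periodic directions with primitive conjugacy-class pairs. The remainder is formal covering space theory.
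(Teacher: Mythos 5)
Your proposal is correct and takes essentially the same route as the paper: the paper proves this by combining the covering-space classification of pointed covers of $\T^\circ$ by subgroups of $F_2$ with Nielsen's identification of closed geodesics on $\T^\circ$ with primitive conjugacy classes (equivalently the Lemma identifying ${\mathcal P}(\frac{p}{q})$ with primitives of homology $\pm(p,q)$), and then takes $U_k$ to be the cover with $\Gamma_{U_k}=P_k$. You simply spell out the monodromy/lifting bookkeeping that the paper leaves implicit, and your inclusion convention ($S_1$ covers $S_2$ iff $H_1\le H_2$) is the standard correct one.
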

We call $U_k$ the {\em universal $k$-periodic square-tiled surface.}

Covering space theory associates a square tiled surface $S$ with basepoint to a subgroup $\Gamma_S$ of the fundamental group $\pi_1\big(\T^\circ,(\frac{1}{2},\frac{1}{2})\big)$. Note that this fundamental group is isomorphic to the free group $F_2$. For purposes of this appendix consider $\pi_1\big(\T^\circ,(\frac{1}{2},\frac{1}{2})\big)$ to be the same as $F_2$.
Following Herrlich we call $S$ {\em characteristic} if $\Gamma_S$ is a characteristic subgroup of $F_2$. Characteristic square-tiled surfaces $S$ are maximally symmetric: they have a deck group acting transitively on the lifts of any point of $\T^\circ$ and each element of $\GL(2,\Z)$ stabilizes $S$ (through the action of $\GL(2,\R)$ on the space of translation surfaces).

Some finite characteristic square-tiled surfaces which are $k$-periodic have attained an almost mythical status in the subject of translation surfaces serving up numerous counterexamples in the field. Especially famous are the fantastically named {\em eierlegende Wollmilchsau} discovered independently in \cite{Forni05} and \cite{HS08} and the {\em ornithorynque} first described in \cite{FM08}. These surfaces were  studied further in \cite{FMZ11} and \cite{Matheus14}. 
If this article were written more geometrically the Heisenberg origamis studied by Herrlich in
\cite{Herrlich06} would play a leading role.

Two facts combine to give a proof of Proposition \ref{prop:universal square tiled surface}:
\begin{enumerate}
\item From basic covering space theory, the square-tiled surface with basepoint $S_1$ covers the square-tiled surface $S_2$ with basepoint if and only if $\Gamma_{S_2} \subset \Gamma_{S_1}$. 
\item A conjugacy class in $F_2$ represents a homotopy class of curves containing closed geodesics on $\T^\circ$ if and only if the conjugacy class consists of primitive elements in $F_2$. This observation dates back to Jakob Nielsen's 1913 Thesis. 
\end{enumerate}
It follows that a square-tiled surface with basepoint $S$ is $k$-periodic if and only if it is covered by the square tiled surface $U_k$ defined such that $\Gamma_{U_k}=P_k$ where $P_k \subset F_2$ denotes the subgroup generated by $k$-th powers of primitive elements as in this paper.

From work in this paper we obtain an understanding of $U_1, \ldots, U_4$:
\begin{enumerate}
\item We have $U_1=\T^\circ$. 
\item The surface $U_2$ is $(\R/2\Z)^2$ punctured at the integer points.
\item The surface $U_3$ is the Heisenberg origami denoted $O_{3,3}$ in \cite{Herrlich06} jointly discovered by Herrlich, M\"oller and Weitze-Schmithuesen.
\end{enumerate}
The eierlegende Wollmilchsau mentioned above is the square-tiled surface $W$ defined such that $\Gamma_W$ is the kernel of the surjective homomorphism $F_2 \to Q$ where $Q$ is the quaternionic group. 
The surface $W$ is $4$-periodic. From our understanding in this paper of $P_4$ and in particular knowledge of the representation $\tilde{\tilde\rho}_4$ of \S \ref{sect:reprentations 4} which is faithful by Theorem \ref{thm:faithful}, we see:

\begin{thm}
The surface $U_4$ is an infinite area square tiled surface and is a torsion-free $5$-dimensional $2$-step nilpotent cover of the eierlegende Wollmilchsau.
\end{thm}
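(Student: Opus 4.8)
The plan is to translate the statement into the language of subgroups of $F_2$ via covering space theory, and then read off the three required properties (infinite area; and that the deck group is torsion-free, nilpotent, and $5$-dimensional) from the faithful representation $\tilde{\tilde\rho}_4$ of \S\ref{sect:reprentations 4}. By the dictionary recalled in this appendix, $U_4$ is the cover of $\T^\circ$ with $\Gamma_{U_4}=P_4$, while the eierlegende Wollmilchsau $W$ is the cover with $\Gamma_W=\ker\rho_4$, since the homomorphism $\rho_4$ realizes the surjection $F_2\to Q$ by Proposition \ref{prop:rho 4}. As $P_4\subset\ker\rho_4$ (also Proposition \ref{prop:rho 4}) and both subgroups are normal in $F_2$ ($P_4$ is even characteristic), the covering $U_4\to W$ is a regular cover whose deck transformation group is $\Delta:=\ker\rho_4/P_4$. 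So everything reduces to showing that $U_4$ has infinitely many unit squares and that $\Delta$ is torsion-free, nilpotent, of Hirsch length (``dimension'') $5$.

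First I would dispatch the elementary points. The surface $W$ has $[F_2:\ker\rho_4]=|Q|=8$ squares, whereas $[F_2:P_4]=\infty$ — for instance because $F_2/P_4$ contains the infinite group $N_p$ of Proposition \ref{prop:N_p} — so $U_4$ has infinitely many squares and hence infinite area, and $U_4\to W$ has infinite degree. Next, since $\tilde{\tilde\rho}_4$ is faithful on $F_2/P_4$ by Theorem \ref{thm:faithful} (so $\ker\tilde{\tilde\rho}_4=P_4$), the deck group is $\Delta\cong\tilde{\tilde\rho}_4(\ker\rho_4)$. Combining the nested kernels $P_4=\ker\tilde{\tilde\rho}_4\subset\ker\tilde\rho_4\subset\ker\rho_4$ with Propositions \ref{prop:trho4} and \ref{prop:ttrho} and the equality $d=1$ from Theorem \ref{thm:faithful}, we get a short exact sequence $1\to\Z\to\Delta\to\Z^4\to 1$. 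An extension of a torsion-free group by a torsion-free group is torsion-free, so $\Delta$ is torsion-free, and its Hirsch length is $1+4=5$; alternatively $\Delta$ contains $N_p$ with index $2^{12}/2^{3}=2^{9}$, and $N_p$ is torsion-free of dimension $5$ by Proposition \ref{prop:N_p}, while Hirsch length is a finite-index invariant.

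The one point that genuinely requires an argument is nilpotency, since containing a nilpotent subgroup of finite index is not enough. Here I would exploit the block structure of $\tilde{\tilde\rho}_4$: for $g\in\ker\rho_4$ the upper-left $8\times8$ block of $\tilde{\tilde\rho}_4(g)$ is $\overline{\rho_4}(g)\otimes\tilde\rho_4(g)$, and since $g\in\ker\rho_4=\ker\overline{\rho_4}$ and $\tilde\rho_4(g)=\gamma(z,w)$ is unipotent by Proposition \ref{prop:trho4}, this block preserves a fixed two-step flag in $\C^8$ acting trivially on the graded pieces; moreover the remaining $1\times1$ diagonal entry of $\tilde{\tilde\rho}_4(g)$ is $1$ (as $d=1$; compare \eqref{eq:image of commutator squared}). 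Hence every element of $\tilde{\tilde\rho}_4(\ker\rho_4)$ preserves one fixed flag $0\subset W_1\subset\C^8\subset\C^9$ and acts as the identity on each graded piece, so $\tilde{\tilde\rho}_4(\ker\rho_4)$ lies in a group of unipotent operators for a fixed flag, which is nilpotent. Therefore $\Delta$ is nilpotent, and assembling the pieces shows that $U_4$ is an infinite-area square-tiled surface which is a torsion-free, $5$-dimensional nilpotent cover of $W$. The main obstacle is precisely this last verification — confirming that $\tilde{\tilde\rho}_4$ carries the whole of $\ker\rho_4$ into a unipotent matrix group — because it is what upgrades ``virtually nilpotent'' (which is immediate from $N_p$) to ``nilpotent''.
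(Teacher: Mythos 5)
Your proposal is correct and follows essentially the same route the paper intends: identify $\Gamma_{U_4}=P_4$ and $\Gamma_W=\ker\rho_4$, so the deck group of $U_4\to W$ is $\ker\rho_4/P_4\cong\tilde{\tilde\rho}_4(\ker\rho_4)$ by Theorem \ref{thm:faithful}, and then read off infiniteness, torsion-freeness, dimension $5$ and nilpotency from Propositions \ref{prop:rho 4}, \ref{prop:trho4}, \ref{prop:ttrho} and \ref{prop:N_p}. The paper leaves this deduction implicit, and your explicit verification that $\tilde{\tilde\rho}_4(\ker\rho_4)$ is unipotent with respect to a fixed flag (hence genuinely nilpotent, not just virtually so via $N_p$) is a worthwhile filling-in of the one step the paper does not spell out; the only slip is cosmetic, since the $(9,9)$ entry equals $1$ for all of $\tilde{\tilde\rho}_4(F_2)$ by inspection of the generators, independently of $d=1$.
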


It is particularly interesting that $U_4$ is a geometrically natural example of an infinite nilpotent cover of a compact translation surface, because some methods are available to study the dynamics of the straight-line flow on such a surface; see for instance \cite{Conze09}. It is a consequence of \cite[Theorem G.3, Remark 4.1]{Hooper15} and $\GL(2,\Z)$-invariance of $U_4$ that:
\begin{cor}
\label{cor:ergodicity}
There is a dense subset $E$ of the unit circle in $\R^2$ with Hausdorff dimension larger than $\frac{1}{2}$ such that for any $(u,v) \in E$ the straight-line flow determined by $(u,v)$ on $U_4$ is ergodic.
\end{cor}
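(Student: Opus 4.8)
The plan is to deduce Corollary~\ref{cor:ergodicity} from the cited results of Hooper once the cover $U_4 \to W$ is understood precisely, where $W$ denotes the eierlegende Wollmilchsau. Since $\rho_4$ is faithful on its image $Q$ we have $\Gamma_W = \ker \rho_4$, and Theorem~\ref{thm:faithful} gives $P_4 = \ker \tilde{\tilde\rho}_4 \subset \ker \rho_4$; hence $\Gamma_{U_4} = P_4 \subset \Gamma_W$ and $U_4 \to W$ is a regular cover whose deck group is $\Gamma = \ker\rho_4/P_4 \cong \tilde{\tilde\rho}_4(\ker \rho_4)$. By the proof of Theorem~\ref{thm:faithful} together with Proposition~\ref{prop:N_p}, $\Gamma$ is finitely generated and torsion-free and contains the $5$-dimensional torsion-free nilpotent group $N_p$ with finite index; in particular $\Gamma$ is virtually nilpotent with polynomial growth, the growth degree being $1 \cdot 4 + 2 \cdot 1 = 6$, read off from the ranks $4$ and $1$ of the successive quotients of its lower central series. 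Finally, $W$ is a square-tiled, hence arithmetic, translation surface, so it is a lattice (Veech) surface (its Veech group is $\SL(2,\Z)$), and since $U_4$ is \emph{characteristic} the full affine $\GL(2,\Z)$-action lifts from $W$ to $U_4$.

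Next I would apply \cite[Theorem~G.3, Remark~4.1]{Hooper15}. Theorem~G.3 provides an ergodicity criterion for the straight-line flow on a cover of a lattice surface by a deck group of polynomial growth: under a non-degeneracy condition on the $\Gamma$-valued cocycle defining the cover --- essentially, that it is not cohomologous to a cocycle supported on a proper affine-invariant subsurface and does not factor through a deck group of smaller rank along a positive-measure set of directions --- the straight-line flow is ergodic in a dense set of directions, and Remark~4.1 records a lower bound for the Hausdorff dimension of that set in terms of the growth degree. Feeding in the growth degree $6$ of $\Gamma$ produces a bound strictly greater than $\frac{1}{2}$.

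The role of the $\GL(2,\Z)$-invariance of $U_4$ is twofold. First, it is used to check the non-degeneracy hypothesis: the piece of first homology carrying the defining cocycle is $\GL(2,\Z)$-invariant, so it cannot be concentrated on a proper invariant subsurface, and the faithful representation $\tilde{\tilde\rho}_4$ certifies that $\Gamma$ is genuinely $5$-dimensional rather than virtually lower-dimensional, ruling out a rank drop along any $\SL(2,\Z)$-orbit of directions. Second, it makes the set $E$ of ergodic directions $\SL(2,\Z)$-invariant; since every $\SL(2,\Z)$-orbit of directions containing a non-exceptional direction is dense on the circle, $E$ is dense, while the Hausdorff-dimension estimate of Remark~4.1 (being $\SL(2,\Z)$-invariant) is unaffected, so $\dim_H E > \frac{1}{2}$ as claimed.

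I expect the main obstacle to be exactly the verification of the non-degeneracy hypothesis of \cite[Theorem~G.3]{Hooper15}: one must translate the linear-algebraic information encoded by $\tilde{\tilde\rho}_4$, by Propositions~\ref{prop:trho4} and~\ref{prop:ttrho}, and by $\GL(2,\Z)$-equivariance into precisely the hypothesis required, ruling out that the $\Gamma$-cocycle degenerates --- in rank or in support --- over an exceptional subsurface or along a special family of directions. Granting that translation, the corollary follows at once.
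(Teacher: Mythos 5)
Your overall route is the same as the paper's: the paper offers no argument beyond assembling the structural facts you list --- $P_4$ is characteristic, so the affine $\GL(2,\Z)$-action on the eierlegende Wollmilchsau lifts to $U_4$; by Theorem \ref{thm:faithful} and Proposition \ref{prop:N_p} the deck group $\ker\rho_4/P_4$ of the cover $U_4\to W$ is torsion-free, $5$-dimensional and (virtually) nilpotent --- and then citing \cite[Theorem G.3, Remark 4.1]{Hooper15}. Where you diverge is in your reconstruction of what that citation requires. The paper's stated reading is that the operative hypothesis is simply virtual nilpotence of the deck group of a cover of a lattice (here square-tiled, hence arithmetic) surface --- it says explicitly that ``virtual nilpotence of $F_2/P_k$ is necessary to apply'' Theorem G.3 --- with $\GL(2,\Z)$-invariance of $U_4$ used, as you also use it, to make the set $E$ of ergodic directions invariant and hence dense; the lower bound $\frac12$ on the Hausdorff dimension is part of the conclusion recorded in Remark 4.1 (a Diophantine-type condition on directions), not something obtained by ``feeding in'' a growth degree of the deck group. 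So your cocycle non-degeneracy hypothesis, the claimed rank-drop condition along positive-measure sets of directions, and the growth-degree computation $1\cdot 4+2\cdot 1=6$ are embellishments not present in the paper's application, and --- by your own admission --- you leave their verification open, which makes your write-up conditional where the paper's is not. If you want a self-contained account, the honest fix is to quote the actual statement of \cite[Theorem G.3]{Hooper15} and check its hypotheses against the three facts above (compact square-tiled base, nilpotent torsion-free deck group, $\GL(2,\Z)$-invariance), rather than against hypotheses you have guessed.
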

As a consequence of the universality of $U_4$ it follows that the straight-line flow determined by each $(u,v) \in E$ is ergodic on each $4$-periodic square tiled surface. This motivates:
\begin{ques}
\label{q:ergodicity}
Is it the straight-line flow determined by $(u,v)$ ergodic on $U_4$ whenever $\frac{u}{v} \not \in \Q$?
\end{ques}
The kernels of the representations $\tilde \rho_k$ for odd $k \geq 5$ determine characteristic $k$-periodic origamis $O_k$ which are infinite free abelian covers of the Heisenberg origamis of Herrlich. The conclusions of Corollary \ref{cor:ergodicity} then hold for the surfaces $O_k$ and we similarly wonder what the answer to 
Question \ref{q:ergodicity} would be in these cases. 

This paper shows that $P_k$ is of infinite index in $F_2$ when $k \geq 4$ and it follows that for $k \geq 4$ the surface $U_k$ is infinite. Virtual nilpotence of $F_2/P_k$ is necessary to apply the \cite[Theorem G.3]{Hooper15} so an affirmative answer to Question \ref{q:extrinsic}(b) in a case of $r=2$ and $k \geq 5$ would extend Corollary \ref{cor:ergodicity} to cover the corresponding $U_k$. Even in the absence of this, the method of \S \ref{sect:representations} can be iterated to produce other characteristic multi-step nilpotent covers of compact square tiled surfaces when applied multiple times in the cases of $k \geq 5$
as with our construction of the representation $\tilde{\tilde \rho}_4 : F_2/P_4 \to \GL(9,\C)$.

\subsection*{Acknowledgements}

KB was partially supported by the National Science Foundation under Grant Number DMS-1405609 as well as by a PSC-CUNY Award (funded by The Professional Staff Congress and The City University of New
York). PH was partially supported by the National Science Foundation under Grant Number DMS-1500965,
by the Simons Foundation under Grant \#632227,
and by a PSC-CUNY Award (funded
by The Professional Staff Congress and The City University of New
York). The authors are grateful to Benson Farb, Ilya Kapovich, Thomas Koberda, Justin Malestein, Andrew Putman, R\'emi Coulon, Dominik Gruber, and Jack Oliver Button for useful conversations, and grateful to an anonymous referee for numerous helpful comments.

\bibliography{refs}

\def\cprime{$'$} \def\cprime{$'$} \def\cprime{$'$} \def\cprime{$'$}
\begin{thebibliography}{{Sag}19}

\bibitem[BT17]{BT2017}
Henry Bradford and Andreas Thom.
\newblock Short laws for finite groups and residual finiteness growth.
\newblock {\em arXiv preprint arXiv:1701.08121v3}, 2017.

\bibitem[Bur02]{WB1902}
W.~Burnside.
\newblock On an unsettled question in the theory of discontinuous groups.
\newblock {\em Quart. J. Pure Appl. Math.}, 33:230--238, 1902.

\bibitem[CG17]{CG17}
R\'emi Coulon and Dominik Gruber.
\newblock Small cancellation theory over {B}urnside groups.
\newblock {\em arXiv preprint arXiv:1705.09651}, 2017.

\bibitem[CMZ17]{MR3729243}
Anthony~E. Clement, Stephen Majewicz, and Marcos Zyman.
\newblock {\em The theory of nilpotent groups}.
\newblock Birkh\"{a}user/Springer, Cham, 2017.

\bibitem[Con09]{Conze09}
Jean-Pierre Conze.
\newblock Recurrence, ergodicity and invariant measures for cocycles over a
  rotation.
\newblock In {\em Ergodic theory: Papers from the Probability and Ergodic
  Theory Workshops held at the University of North Carolina}, pages 45--70.
  American Mathematical Society, 2009.

\bibitem[FM08]{FM08}
Giovanni Forni and Carlos Matheus.
\newblock An example of a {T}eichmuller disk in genus 4 with degenerate
  {K}ontsevich-{Z}orich spectrum.
\newblock {\em arXiv preprint arXiv:0810.0023}, 2008.

\bibitem[FMZ11]{FMZ11}
Giovanni Forni, Carlos Matheus, and Anton Zorich.
\newblock Square-tiled cyclic covers.
\newblock {\em Journal of Modern Dynamics}, 5(2):285--318, 2011.

\bibitem[For05]{Forni05}
Giovanni Forni.
\newblock On the {L}yapunov exponents of the {K}ontsevich--{Z}orich cocycle.
\newblock {\em Handbook of dynamical systems}, 1:549--580, 2005.

\bibitem[GAP17]{GAP4}
The GAP~Group.
\newblock {\em {GAP -- Groups, Algorithms, and Programming, Version 4.8.8}},
  2017.

\bibitem[Gv64]{MR0161852}
E.~S. Golod and I.~R. \v{S}afarevi\v{c}.
\newblock On the class field tower.
\newblock {\em Izv. Akad. Nauk SSSR Ser. Mat.}, 28:261--272, 1964.

\bibitem[Her06]{Herrlich06}
Frank Herrlich.
\newblock Teichmuller curves defined by characteristic origamis.
\newblock {\em Contemporary Mathematics}, 397:133--144, 2006.

\bibitem[Hoo15]{Hooper15}
W~Patrick Hooper.
\newblock The invariant measures of some infinite interval exchange maps.
\newblock {\em Geometry \& Topology}, 19(4):1895--2038, 2015.

\bibitem[HS08]{HS08}
Frank Herrlich and Gabriela Schmith{\"u}sen.
\newblock An extraordinary origami curve.
\newblock {\em Mathematische Nachrichten}, 281(2):219--237, 2008.

\bibitem[KS16]{MR3570293}
Thomas Koberda and Ramanujan Santharoubane.
\newblock Quotients of surface groups and homology of finite covers via quantum
  representations.
\newblock {\em Invent. Math.}, 206(2):269--292, 2016.

\bibitem[KT16]{MR3451381}
Gady Kozma and Andreas Thom.
\newblock Divisibility and laws in finite simple groups.
\newblock {\em Math. Ann.}, 364(1-2):79--95, 2016.

\bibitem[MP17]{MP2017}
Justin Malestein and Andrew Putman.
\newblock Simple closed curves, finite covers of surfaces, and power subgroups
  of ${O}ut({F}_n)$.
\newblock {\em arXiv preprint arXiv:1708.06486}, 2017.

\bibitem[MWS14]{Matheus14}
Carlos Matheus and Gabriela Weitze-Schmith{\"u}sen.
\newblock Some examples of isotropic {$\mathrm{SL} (2, \mathbb{R})$}-invariant
  subbundles of the {H}odge bundle.
\newblock {\em International Mathematics Research Notices},
  2015(18):8657--8679, 2014.

\bibitem[Ov80]{MR571100}
A.~Ju. Ol\cprime~\v{s}anski\u{\i}.
\newblock An infinite group with subgroups of prime orders.
\newblock {\em Izv. Akad. Nauk SSSR Ser. Mat.}, 44(2):309--321, 479, 1980.

\bibitem[{Sag}19]{sagemath}
{The Sage Developers}.
\newblock {\em {S}ageMath, the {S}age {M}athematics {S}oftware {S}ystem
  ({V}ersion 8.6)}, 2019.
\newblock {\tt https://www.sagemath.org}.

\bibitem[Zor06]{Zorich06}
Anton Zorich.
\newblock Flat surfaces.
\newblock {\em Frontiers in number theory, physics, and geometry I}, pages
  439--585, 2006.

\end{thebibliography}
\bibliographystyle{alpha}

\end{document}